\theoremstyle{thmstyleone}%
\newtheorem{theorem}{Theorem}
\newtheorem{proposition}[theorem]{Proposition}%
\newtheorem{corollary}[theorem]{Corollary}
\newtheorem{lemma}[theorem]{Lemma}
\theoremstyle{thmstyletwo}%
\newtheorem{remark}{Remark}%
\theoremstyle{thmstylethree}%
\newtheorem{definition}{Definition}%
\newcommand{\p}{\partial}
\newcommand{\dd}{{\rm d}}
\begin{document}

\title[Global hyperbolicity and manifold topology]{Global hyperbolicity and manifold topology from the Lorentzian distance}


\author[1]{\fnm{Aleksei} \sur{Bykov}} \email{aleksei.bykov@alumni.uniroma2.eu}


\author[2]{\fnm{Ettore} \sur{Minguzzi} }\email{ettore.minguzzi@unipi.it}


\affil[1]{\orgdiv{Dipartimento di Matematica e Informatica ``U. Dini'', Universit\`a degli Studi di Firenze},  \orgaddress{\street{via S. Marta 3},  \postcode{50139}, \city{Firenze}, \country{Italy}
}}

\affil[2]{\orgdiv{Dipartimento di Matematica}, \orgname{Universit\`a degli Studi di Pisa}, \orgaddress{\street{Largo B. Pontecorvo 5}, \postcode{56127}, \city{Pisa}, \country{Italy}

}}


\abstract{In this work, we seek characterizations of global hyperbolicity in smooth Lorentzian manifolds that do not rely on the manifold topology and that are  inspired by metric geometry. In particular, strong causality is not assumed, so part of the problem is precisely that of  recovering the manifold topology so as to make sense of it also in rough frameworks. After verifying that known standard characterizations do not meet this requirement, we propose two possible formulations.
The first is based solely on chronological diamonds and is interesting due to its analogies with the Hopf-Rinow theorem. The second uses only properties of the Lorentzian distance function and it is suitable for extension to abstract `Lorentzian metric' frameworks. It turns out to be equivalent to the definition of `Lorentzian metric space' proposed in our previous joint work with S. Suhr, up to slightly strengthening  weak $d$-distinction to `future or past $d$-distinction'. The role of a new property which we term  {\em $d$-reflectivity} is also discussed. We then investigate continuity properties of the Lorentzian distance  and the property of $d$-reflectivity in non-smooth frameworks.
Finally, we establish a result of broader interest: the exponential map of a smooth spray is \( C^{1,1} \) (smooth outside the zero section). Additionally, we derive a Lorentz-Finsler version of the Busemann-Mayer formula and demonstrate that, in strongly causal smooth Finsler spacetimes, the Finsler fundamental function can be reconstructed from the distance. As a consequence, distance-preserving bijections are shown to be Lorentz-Finsler isometries in the conventional smooth sense.}


\maketitle

\section{Introduction}
The condition of global hyperbolicity is the most important in mathematical relativity, as it allows us to regard the evolution of spacetime as a well-defined PDE problem, with initial conditions imposed on a Cauchy hypersurface.

Many equivalent characterizations of global hyperbolicity have been proven over the years for smooth Lorentzian manifolds (all of which can be extended to $C^{1,1}$ metrics; we shall not concern ourselves here with the best regularity class). The many different, often disparate, ways of formulating the condition point to its ubiquity and fundamental physical importance.

The objective of this work is to formulate global hyperbolicity without appealing to the manifold topology. It is expected, particularly for unification with the quantum world, that at the most fundamental level spacetime is not described by a smooth manifold, as quantum mechanics usually introduces elements of discreteness. At the same time, causality and the related notion of global hyperbolicity appear too fundamental to be discarded lightly.

While we cannot be sure which mathematical structure will prove to be the correct one to replace the successful smooth Lorentzian manifold of general relativity, it seems reasonable to look for a formulation of global hyperbolicity that does not use the manifold topology. This is because the notion of order implicit in the causal structure, or the measure implicit in the spacetime volume, can be more easily substituted by other abstract mathematical entities than the manifold and its topology \cite{minguzzi13e}. In a sense, the manifold topology should be recovered and hence justified among the many available possibilities. Early approaches have, for instance,  explored its connection with alternative topologies derived from notable families of curves or hypersurfaces on spacetime \cite{hawking76,goebel76}, see
e.g.\ the discussion in \cite{low16,papadopoulos19,papadopoulos21}.

In this endeavor, we shall take inspiration from metric geometry \cite{burago01}, thus following the recent trend in Lorentzian metric/length spaces \cite{kunzinger18,muller19,ake20,cavalletti20,costa20,braun22,muller22,mccann22,braun23b,burgos23,mccann24,sakovich22,kunzinger22,minguzzi22,beran23}, rather than order/domain theory. In the latter direction, an important reference is the work by Martin and Panangaden \cite{martin06} and subsequent developments \cite{ebrahimi14,ebrahimi15,sharifzadeh19,finster21,mazibuko23}.
For instance, it has been shown \cite{ebrahimi15,sorkin19} that under $K$-causality (which is equivalent to stable causality) the manifold topology can be recovered from the $K$ (or Seifert) relation in a completely order theoretic manner using ideas from domain theory.
 Note that in these studies topology is derived from the notion of order, while in metric inspired theories topology is deduced from  a two-point function.

Spacetimes $(M,g)$, $g\in C^{1,1}$, admit a natural two-point function, the {\em Lorentzian distance} $d: M\times M\to [0,\infty]$. This is defined for $x,y\in M$ by the expression \cite{beem96,minguzzi18b}
\[
d(x,y):= \sup_\sigma \ell[\sigma], \qquad  \ell[\sigma]:=\int_\sigma \sqrt{-g(\dot \sigma, \dot \sigma}) \dd t ,
\]
where the supremum is over all  future-directed absolutely continuous causal curves $\sigma: I \to M$ connecting $x$ to $y$, and by setting $d(x,y)=0$ if there is no such curve (i.e.\ $(x,y)\notin J$ where $J$ is the causal relation).
Let us denote $d_p:=d(p, \cdot)$ and $d^p:=d(\cdot, p)$.
Ultimately, we prove that globally hyperbolic spacetimes and their topologies are characterized  as follows  (this is the same as Thms.\ \ref{cjpr}, \ref{coox} but with all properties spelled out).\\

\begin{theorem} \label{cakop}
A  spacetime $(M,g)$, $g\in C^{1,1}$, is globally hyperbolic if and only if the Lorentzian distance  has the following properties
\begin{itemize}
\item[(i)] $d$ is finite,
\item[(ii)]  there is a topology $T$ on $M$ such that $d$ is $T\times T$-continuous and the chronological diamonds  $I(p,q):=\{r: d_p(r) d^q(r)>0\}$,  $p,q\in M$, are relatively compact in the $T$ topology.
\item[(iii)] $(M,d)$ is future or past $d$-distinguishing, namely
\[
``\forall p,q\in M,  \ d_p=d_q \Rightarrow p=q " \textrm{ or } \ `` \forall p,q\in M,  \ d^p=d^q \Rightarrow p=q ".
\]
\end{itemize}
In this case $T$ is the manifold topology and the causal relation is given by
\begin{equation} \label{ckkqp}
J=\{(p,q): d_p \ge d_q\}=\{(p,q):  d^p\le d^q\}.
\end{equation}
\end{theorem}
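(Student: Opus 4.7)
The statement consolidates Theorems~\ref{cjpr} and~\ref{coox}, so the plan is to invoke those and assemble the combined characterization. Even so, I sketch the two directions independently.

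\emph{Necessity.} Assume $(M,g)$ is globally hyperbolic; take $T$ to be the manifold topology. Condition (i) is the well-known finiteness of $d$ under global hyperbolicity, and (ii) follows from the joint continuity of $d$ in the manifold topology together with the compactness of causal diamonds (which subsumes relative compactness of the chronological ones). For (iii), global hyperbolicity implies strong causality and hence past- and future-distinguishing; combined with the always-valid identity $I^+(p)=\{r:d_p(r)>0\}$, the equality $d_p=d_q$ forces $I^+(p)=I^+(q)$ and hence $p=q$, with the symmetric statement for $d^p=d^q$. The identity \eqref{ckkqp} then follows: one inclusion is the reverse triangle inequality, while the other uses strong causality together with continuity of $d$, namely if $p\not\le q$ one finds $r\in I^+(q)\setminus J^+(p)$ with $d(q,r)>0=d(p,r)$, contradicting $d_p\ge d_q$.

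\emph{Sufficiency.} This is the substantive direction, carried out in steps. (a) Use $T$-continuity of $d$ to show that the abstract diamonds $I(p,q)=\{r:d_p(r)d^q(r)>0\}$ coincide with the usual chronological diamonds $I^+(p)\cap I^-(q)$. (b) From one-sided $d$-distinction combined with the relative compactness of diamonds in $T$, derive strong causality of $(M,g)$. (c) Once strong causality holds, the chronological diamonds form a basis for the manifold topology; comparing this basis with the $T$-openness of the diamonds, the $T$-continuity of $d$, and the $T$-compactness of diamond closures, forces $T$ to coincide with the manifold topology. (d) Relative compactness in $T$ then translates into compactness of $J^+(p)\cap J^-(q)$ in the manifold topology, yielding global hyperbolicity. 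The identity \eqref{ckkqp} is then recovered exactly as in the necessity argument.

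\textbf{Main obstacle.} The delicate point is step (b): extracting strong causality from the conjunction of $d$-distinction and the existence of a topology $T$ with the listed properties. The one-sided $d$-distinction hypothesis is essential here, since (as noted in the abstract) weak $d$-distinction is too permissive. The argument must rule out scenarios in which $T$ is so coarse that it conceals causality violations, and the relative-compactness axiom is what provides the rigidity needed to preclude such pathologies and thereby upgrade distinction to strong causality.
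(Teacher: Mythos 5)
Your necessity direction and the final assembly (steps (a), (c), (d)) are fine and match the paper's route: once strong causality is in hand, Proposition~\ref{jept} applied to the family of chronological diamonds identifies $T$ with the Alexandrov and hence the manifold topology, and the precompactness of chronological diamonds then yields global hyperbolicity via \cite[Cor.~3.3]{minguzzi08e}. The identity \eqref{ckkqp} also follows as you indicate, since for globally hyperbolic spacetimes $J$ is closed and reflectivity holds, so $J=D=\{(p,q):d_p\ge d_q\}=\{(p,q):d^p\le d^q\}$.

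However, your step (b) is not a proof but a restatement of the problem, and it is precisely the entire mathematical content of the hard direction. The paper does not pass directly from one-sided $d$-distinction to strong causality; it first proves $d$-\emph{reflectivity} (Theorem~\ref{coox}), after which causal continuity, and hence strong causality, follows from Theorem~\ref{vkqpd} (weak $d$-distinction plus $d$-reflectivity). The reflectivity argument is genuinely delicate: one shows $(M,d)$ is a countably generated Lorentzian metric space, so its topology is Polish and compactness is sequential; given $d_p\ge d_q$ one takes points $q_n=\gamma(1/n)$ on a timelike curve issuing from $q$, extracts a subsequence converging in the Lorentzian-metric-space topology (not the manifold topology) to some $x$ inside a relatively compact diamond, and uses closedness of $\tilde J=D$ in that topology to get $(q,x)\in D$, $(p,x)\in D$ and $I^+(x)=I^+(q)$. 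If $x=q$ one is done; if $x\ne q$, a limit-curve argument applied to timelike curves $\sigma_k$ from $x$ to $x_k$ (convergent in one topology to $x$, in the other to $q$) produces a second pair of points violating past distinction, so that both future and past distinction fail, contradicting hypothesis (iii). Your sketch contains none of this two-topology bookkeeping, no mechanism for producing the limit point $x$, and no explanation of how the relative-compactness axiom is actually deployed; saying that it ``provides the rigidity needed to preclude such pathologies'' leaves the theorem unproved. (A minor further point: the paper does not assert that weak $d$-distinction is ``too permissive''; whether it suffices is stated as an open question.)
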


Taking into account that $d$ satisfies the reverse triangle inequality for chronologically related events and that every event is included in some chronological diamond, we recognize that these properties are precisely those that define a Lorentzian metric space (without chronological boundary, $M=I(M)$) in the sense of our previous joint work with Suhr \cite{minguzzi22} \cite[Thm.\ 2.14]{minguzzi24b}, up to the fact that in a Lorentzian metric space (iii) is replaced by the slightly weaker `weak $d$-distinction': $\forall p,q\in M$, $d_p=d_q$ and $d^p=d^q \Rightarrow p=q$ (unfortunately, we do not know if (iii) can be weakened to such property).

The fact that properties (i)-(iii) are necessary was proved in \cite[Prop.\ 2.4, Thm.\ 2.14]{minguzzi24b} and is relatively straightforward, while the above type of converse is considerably more difficult and is only proved in this work.
The difficulties are related to the absence of any a priori relative strength between Lorentzian metric space topology (i.e.\ the topology $T$, which is actually known to be unique and coincident with the initial topology of the functions $d^p$, $d_p$, $p\in M$) and the manifold topology: neither is a priori included in the other.

For instance, we expect that discontinuities of $d$  in the manifold topology increase the family of $T$-open sets with respect to the manifold topology, while causal pathologies such as violation of strong causality decrease their number. Extreme care must be exerted in proofs, as continuous curves in one topology might not be continuous in the other, the time functions in the regular spacetime sense might not coincide with the time functions in the Lorentzian metric space sense, causal curves in the regular spacetime sense might  not coincide with isocausal curves in the Lorentzian metric space sense, and so on. Still, the fact that for both topologies a limit curve theorem holds, though relative to different type of curves and topologies, signals that they could be related and actually coincide. In fact, the validity of a limit curve type theorem has been recognized as one important criteria to select the natural spacetime topology \cite{low16,papadopoulos21}.

The above properties (i)-(iii) are formulated just through $d$, without including conditions such as $p\in \textrm{Cl}_T({I^\pm(p)})$, or the requirement that through each point passes an isochronal curve. This makes them suitable for an abstract definition of Lorentzian metric space which includes causal sets \cite{surya19}, as the latter are naturally endowed with the discrete topology. If such pointwise conditions were included,  the topology $T$ would coincide with the Alexandrov topology \cite[Thm.\ 3.9]{minguzzi24b} and the proof would be considerably simplified,\footnote{Some authors \cite{mondino18,mccann24} work with a function $\tau: M\times M\to \{-\infty\} \cup [0,+\infty]$ which coincides with $d$ only where it is not equal to $\{-\infty\}$, the  locus $\{d=-\infty\}$ being the complement of a closed relation $J$ identified with the causal relation. For a comparison of these approaches see \cite{minguzzi24b}. Essentially, in the latter framework $\tau$ contains more information than $d$ so it is easier to obtain from conditions on it various results. The additional information in $\tau$ is not strictly needed as the causal relation and its closure can be recovered with formulas such as (\ref{ckkqp}), see also \cite[Eq.\ (10)]{minguzzi22} \cite[Eq.\ (3)]{minguzzi24b}. These formulas are related to relation $D$ in the regular settings \cite{minguzzi07e,heveling21}, as we shall recall.} cf.\ Thm.\ \ref{cjpr}.

This work is structured as follows. We end this Introduction by recalling some characterizations of global hyperbolicity in subsection \ref{cnqkd}. In subsection \ref{mopr} we single out a formulation independent of topology, and dependent on properties of the family of chronological diamonds, which is interesting for analogies with the Heine-Borel property in positive signature. In Section \ref{cnpda} we approach the problem of characterizing global hyperbolicity via the Lorentzian distance function.  We introduce the interesting new notions of $d$-distinction and $d$-reflectivity, which turn out, in the regular case considered there, to be equivalent to the standard notions, having however the advantage of being formulated directly via the Lorentzian distance.
Ultimately, we prove the above main theorem, and in the following sections we demonstrate how the characterisations we suggest
work in less regular or Finslerian cases.

In Section \ref{cnqp4} we prove that globally hyperbolic proper Lorentz-Finsler spaces in the low regularity  sense of \cite{minguzzi17} are Lorentzian metric spaces in the sense of \cite{minguzzi22,minguzzi24b}, extending a result for regular spacetimes already contained in \cite{minguzzi24b}. In Section \ref{cmp3q} the folklore result that distance preserving maps of smooth spacetimes are actually Lorentzian isometries is proved. This is done to answer some questions in relation to the previous paper on Lorentzian metric spaces \cite{sakovich24,minguzzi22}. Subsequently, the result is generalized to the Lorentz-Finsler case by passing through the proof of  a Lorentz-Finsler Busemann-Mayer type formula (and another useful formula).
In Section \ref{qqpo} we pass to the low regularity theory first showing that previous results on the Lorentzian distance of Section \ref{cnpda} generalize to proper Lorentz-Finsler spaces. Then, several results on the Lorentzian distance are explored for structure even more general than Lorentzian metric spaces. In particular, the interplay between the continuity properties of $d$, $d$-reflectivity and the condition $p\in \overline{I^{\pm}(p)}$ is clarified.

Before we continue  let us recall our notations and terminologies.
In this work a manifold is always Hausdorff and second countable, hence paracompact. A spacetime $(M,g)$ is a connected time-oriented Lorentzian manifold whose metric $g$ is $C^2$ ($C^{1,1}$ will be enough) and of signature $(-,+,\cdots, +)$. It can also be called {\em regular spacetime}.
The inclusion $\subset$ is reflexive. When comparing topology `finer' shall always mean `as fine as', that is, `containing'. With a curve $\gamma$ we might mean a map  $\gamma\colon I \to M$ or the image of the map.
We write $p<q$ if there is a causal curve connecting $p$ to $q$, and $p\ll q$ if there is a timelike curve connecting $p$ to $q$. We write $p\le q$ if $p<q$ or $p=q$. The sets $J=\{(p,q)\colon p\le q\}$ and $I=\{(p,q)\colon p\ll q\}$ are the causal and chronological relations respectively (for a Lorentzian metric space the abstract causal relation might be denoted $\tilde J$ to avoid confusion with that for $(M,g)$).
A {\em causal diamond} is a set of the form $J^+(p)\cap J^-(q)$ also denoted $J(p,q)$ (and similarly for {\em chronological diamonds}).
The {\em causally convex hull} of a set $S\subset M$, is the set $J^+(S)\cap J^-(S)$, namely the union of the images of all the causal curves which start and end in $S$.
For most results of causality theory we refer to the recent review \cite{minguzzi18b}.

\subsection{Available characterizations} \label{cnqkd}

Let us consider some classical characterization of global hyperbolicity \cite{hawking73}
\begin{itemize}
\item[(a)] Strong causality and the causal diamonds are compact,
\item[(b)] Existence of a Cauchy hypersurface \cite{geroch70},
\item[(c)] Strong causality and compactness of the space of causal curves connecting any two events with respect to the $C^0$ topology \cite[Prop.\ 6.6.2]{hawking73}.
\end{itemize}

There are many variants of (a) in which strong causality is replaced by weaker conditions or in which the causal diamonds are replaced by different causal hulls  \cite{bernal06b,minguzzi19c,minguzzi23} \cite[Cor.\ 3.2, Thm.\ 3.6]{minguzzi08e} \cite{samann16} \cite[Def.\ 2.20]{minguzzi17} but all make use of compactness with respect to the manifold topology.

As for (b), we recall that a Cauchy hypersurface is an acausal hypersurface (without edge) that intersects every inextendible causal curve exactly once. Here the manifold topology enters both   the definition of hypersurface and curve. In particular, it seems difficult to dispense with it in the definition of curves.

Finally, in (c) the use of topology is evident as the topology on the space of curves is induced from that on the manifold.

To the previous classical list we can add
\begin{itemize}
\item[(d)] existence of a smooth $h$-steep Cauchy temporal function \cite[Thm.\ 3]{bernard18} (see also \cite[Thm.\ 2.45]{minguzzi17});
\item[(e)]  There is a time function $\tau$ such that $(M, \hat d_\tau)$ is  a complete metric space, where $\hat d_\tau$ is the null distance \cite[Thm.\ 1.4]{burtscher22}.
\end{itemize}

Here use of the manifold topology is made in the time/temporal functions used which, by definition, must be continuous in the manifold topology.

\subsection{A manifold-topology-independent characterization} \label{mopr}

Having established that none of the available characterizations of global hyperbolicity is independent of the manifold topology one might suspect that it should be difficult to find one. Instead,  there is a rather simple characterization that accomplishes such objective:\\

\begin{theorem} \label{prtr}
A spacetime $(M,g)$ is globally hyperbolic iff  the topology generated by the chronological diamonds is Hausdorff and any chronological diamond is relatively compact in that topology.\\
\end{theorem}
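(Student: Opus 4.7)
The strategy is to reduce to the classical characterization of global hyperbolicity as strong causality together with compactness of the causal diamonds, leveraging the fact that the topology $\mathcal{A}$ generated by the chronological diamonds $I(p,q)=I^+(p)\cap I^-(q)$ is the \emph{Alexandrov topology}, and that $\mathcal{A}$ is Hausdorff if and only if $(M,g)$ is strongly causal, in which case $\mathcal{A}$ coincides with the manifold topology.

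For the forward implication, global hyperbolicity yields strong causality, so $\mathcal{A}$ agrees with the (Hausdorff) manifold topology. Moreover each chronological diamond is contained in the corresponding causal diamond $J^+(p)\cap J^-(q)$, which is compact by global hyperbolicity; hence chronological diamonds are relatively compact in $\mathcal{A}$.

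For the converse, Hausdorffness of $\mathcal{A}$ gives strong causality and thus identifies $\mathcal{A}$ with the manifold topology, so the assumed $\mathcal{A}$-relative compactness of chronological diamonds translates to relative compactness in the manifold topology. It then suffices to prove compactness of every nonempty causal diamond $J^+(p)\cap J^-(q)$. Pick $p'\ll p$ and $q'$ with $q\ll q'$ (always possible); by the push-up property $p'\ll p\le r\le q\ll q'$ implies $p'\ll r\ll q'$, so $J^+(p)\cap J^-(q)\subseteq I(p',q')\subseteq K:=\overline{I(p',q')}$, and $K$ is compact. Closedness of $J^+(p)\cap J^-(q)$ is obtained via the limit curve theorem: for $r_n\in J^+(p)\cap J^-(q)$ converging to $r$, choose causal curves $\alpha_n$ from $p$ to $r_n$ and $\beta_n$ from $r_n$ to $q$, all contained in $K$; strong causality together with the compact confinement allows the extraction of subsequential limit causal curves from $p$ to $r$ and from $r$ to $q$, proving $r\in J^+(p)\cap J^-(q)$.

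The main obstacle is the invocation of the classical but nontrivial equivalence ``Alexandrov topology Hausdorff $\Leftrightarrow$ strong causality $\Leftrightarrow$ Alexandrov topology $=$ manifold topology'', which underpins the whole argument; once this is at hand, the forward direction is immediate and the converse reduces to routine push-up and limit curve arguments.
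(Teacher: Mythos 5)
Your proof is correct and follows essentially the same route as the paper: both directions hinge on the classical equivalence between Hausdorffness of the Alexandrov topology, strong causality, and coincidence of the Alexandrov topology with the manifold topology, and your forward direction is identical to the paper's. The only difference is the final step of the converse: the paper delegates it to a cited corollary (non-total imprisonment plus compactness of the closures $\overline{I(p,q)}$ implies global hyperbolicity), whereas you re-derive compactness of the causal diamonds directly, via the push-up containment $J^+(p)\cap J^-(q)\subseteq I(p',q')$ for $p'\ll p$, $q\ll q'$, together with a limit-curve argument for closedness. That direct argument is sound: the curves $\alpha_n$, $\beta_n$ are indeed confined to the compact set $\overline{I(p',q')}$, and non-total imprisonment (a consequence of strong causality) excludes the inextendible-limit-curve alternative of the limit curve theorem, so your version is simply a more self-contained rendering of the same proof.
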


We recall that a chronological diamond is a set of the form $I(p,q):=I^+(p)\cap I^-(q)$ where $I$ is the chronological relation (which is known to be open in the manifold topology).
Observe that in a regular spacetime if a point is contained in the intersection of two diamonds then we can find a third diamond containing the point and staying within the two diamonds (just consider the endpoints of a short timelike curve passing through the point). This means that the family of chronological diamonds, regarded as a subbasis for a topology, is actually a basis for such topology. The topology so generated is called {\em Alexandrov topology}.

\begin{proof}%
The direction to the right is clear due to strong causality \cite{hawking66b,kronheimer67,penrose72}\cite[Thm.\ 4.75]{minguzzi18b} and the fact that chronological diamonds are contained within causal diamonds (which are compact by the standard definition of global hyperbolicity).

For the converse, the assumption that the Alexandrov topology is Hausdorff implies, by a classical result \cite{hawking66b,kronheimer67,penrose72}\cite[Thm.\ 4.75]{minguzzi18b}, that the spacetime is strongly causal and that the Alexandrov topology coincides with the manifold topology. In particular, the spacetime is non-total imprisoning, and thus the result follows from \cite[Cor.\ 3.3]{minguzzi08e}: a spacetime is globally hyperbolic if it is non-total imprisoning and, for every $p, q \in M$, $\overline{I(p,q)}$ is compact. In the original formulation, the closure was taken with respect to the manifold topology, which we have just shown to coincide with the Alexandrov topology. This concludes the proof.
\end{proof}

This characterization is remarkable in several respects. Not only does it achieve our objective, but it also depends solely on the notion of a chronological diamond. This notion contains the same information as the chronological relation itself; while we defined the chronological diamond using the chronological relation, the reverse direction can also be established via
\[
(p,q) \in I \iff I(p,q) \ne \emptyset.
\]
Of course,  one may insist that in the traditional regular setting of mathematical relativity the chronological relation must be derived from a notion of timelike curve, and that such curves must be continuous with respect to some topology, then it becomes difficult to accept this as a definition of global hyperbolicity that is independent of topology. But that is precisely the point: the result above suggests that timelike curves and topology are less primitive than the chronological relation. From this perspective, it makes sense to regard spacetime as constructed from a relation expressing causality, supplemented by other objects such as, e.g., a spacetime measure or a Lorentzian distance. The path from the relation to global hyperbolicity then becomes more direct, without needing to rely on traditional notions like curves or topology.  We also note that, from the physical point of view,
 the chronological relation has more operational meaning than the topology.
We conclude that, although from a traditional technical perspective this definition relies on the manifold topology,
 from a broader perspective it is topology-independent.

This is what makes the study of low-regularity spacetime geometry so compelling: one does not begin with predefined ideas of what the fundamental objects should be. Instead, it is the simplicity of the relationships among various objects that suggests which concepts should be taken as primitive. 

In previous work, we argued that one of the best definitions of global hyperbolicity is one that frames it within the context of topological ordered spaces \cite[Sec.\ 1.1, Def.\ 3.1]{minguzzi12d} \cite{minguzzi17,minguzzi23}: the causal relation is a closed order, and the causally convex hull operation preserves compactness. This definition is highly robust when it comes to weakening the regularity conditions of the spacetime, precisely because it depends only on the topology and order. Notably, the characterization in Theorem \ref{prtr} takes a completely different direction. It does not require a predefined topology, and the relation used is the  chronological one, not the  causal one.

Clearly, Theorem \ref{prtr} suggests using this characterization to define global hyperbolicity for sets endowed with a strict partial order (irreflexive, transitive, asymmetric) $I$, a mathematical setting quite distinct from that of closed ordered spaces.

To these authors, it remains puzzling that the notion of global hyperbolicity can be framed in such general yet distinct frameworks. This may further underscore the importance of this concept, though the origin of these manifestations has yet to be clarified.

We now wish to emphasize the formal analogy between the characterization in \ref{prtr} and the Hopf-Rinow theorem in Riemannian geometry \cite[Thm.\ 4.1]{kobayashi63}, which, in particular, states: Let $(M,g)$ be a connected and smooth Riemannian manifold. It is a complete Riemannian space if and only if the closed and bounded subsets of $M$ are compact (Heine-Borel property).

Of lesser importance to us is the fact that the notion of a complete Riemannian space is related to the completeness of geodesics. This type of notion is clearly among the least robust under low regularity, as geodesics require at least a Lipschitz connection to be defined. Similarly, of little importance is the fact that it is equivalent, by the same Hopf-Rinow theorem, to the completeness of the metric space $(M,d)$, where $d$ is the metric. What is important for us is that the concept of a `complete Riemannian space' can be introduced using the Heine-Borel property.
That is, if we refer to $B(p,r):=\{q\in M : \ d(p,q)<r\}$ as a `ball', we can rephrase\\

\begin{theorem}
Let $(M,g)$ be  a connected and smooth Riemannian space. It is a complete Riemannian space iff the balls are relatively compact in the topology generated by the balls.\\
\end{theorem}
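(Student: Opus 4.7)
The plan is to reduce the statement to the classical Hopf-Rinow theorem, so the main preliminary step is to identify the topology generated by the balls with the manifold topology. On a smooth connected Riemannian manifold, the Riemannian distance $d$ is continuous in the manifold topology and its open balls $B(p,r)$ form a basis for a topology that coincides with the manifold topology; this is a standard fact (e.g.\ \cite[Thm.\ 3.1]{kobayashi63}). Once this identification is in place, ``relatively compact in the topology generated by the balls'' reads as ``relatively compact in the manifold topology,'' and we are firmly in the setting of the usual Hopf-Rinow theorem.

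For the forward implication I would assume $(M,g)$ is complete (in the geodesic sense). The classical Hopf-Rinow theorem then grants the Heine-Borel property: closed and bounded subsets of $M$ are compact. Since each open ball $B(p,r)$ is bounded, its closure is a closed bounded set, hence compact, so $B(p,r)$ is relatively compact.

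For the converse, assume every ball is relatively compact. I would argue metric completeness of $(M,d)$ first and then invoke Hopf-Rinow a second time to pass to geodesic completeness. Given a Cauchy sequence $\{p_n\}$ in $(M,d)$, Cauchyness provides $R>0$ with $p_n\in B(p_1,R)$ for all $n$; by hypothesis $\overline{B(p_1,R)}$ is compact, so $\{p_n\}$ admits a convergent subsequence, and a Cauchy sequence with a convergent subsequence converges. Thus $(M,d)$ is a complete metric space, and Hopf-Rinow yields geodesic completeness.

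The proof contains no real obstacle: everything rests on the well-known coincidence of the metric and manifold topologies and on Hopf-Rinow. The only point worth handling carefully is that the theorem speaks of ``the topology generated by the balls'' rather than ``the manifold topology,'' and one must justify the identification before applying Heine-Borel in either direction; this mirrors the analogous (and much harder) identification between the Lorentzian metric space topology $T$ and the manifold topology that occupies the main body of the paper.
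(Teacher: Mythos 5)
Your proof is correct and follows exactly the route the paper intends: the paper states this theorem without proof as an immediate rephrasing of the classical Hopf--Rinow theorem, relying on the (triangle-inequality-guaranteed) coincidence of the ball-generated topology with the metric and manifold topologies, which is precisely the identification you make explicit before applying Heine--Borel in one direction and metric completeness plus Hopf--Rinow in the other. No gaps; your write-up simply supplies the details the paper leaves implicit.
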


There is a clear parallelism between Theorem \ref{prtr} and this theorem, with the following replacements
\begin{align*}
 \textrm{Spacetime} &\leftrightarrow \textrm{Riemannian space} \\
  \textrm{Globally hyperbolic spacetime} &\leftrightarrow \textrm{Complete Riemannian space} \\
   \textrm{Chronological diamond} &\leftrightarrow \textrm{Ball} \\
   \textrm{Alexandrov topology} &\leftrightarrow \textrm{Metric topology}
\end{align*}
The main difference is the absence of the requirement for the Hausdorff property in the positive signature result. This is because the triangle inequality ensures it. Additionally, the triangle inequality guarantees that the distance $d$ is continuous.
These properties are lost in the Lorentzian signature. However, the triangle inequality ensures, as it does in the Lorentzian signature, that the balls generate the same topology whether considered as a basis or subbasis.

Theorem \ref{prtr} thus supports the idea that globally hyperbolic spacetimes are the analog,  in the Lorentzian signature, to complete Riemannian spaces. This correspondence can be extended further, with characterization (e) serving as an analog of the Hopf-Rinow theorem, as envisioned by Burtscher and Garc\'{\i}a-Heveling. Perhaps Theorem \ref{prtr} clarifies more than other results that the analog of balls in the Lorentzian signature are the {\em chronological diamonds}. Of course, the compactness of causal diamonds, which is part of the definition of global hyperbolicity, has long been viewed as analogous to the Heine-Borel property. However, only when this compactness condition is expressed in terms of chronological diamonds can we dispense with the manifold topology and establish a compelling correspondence between the two signatures.

\section{Lorentzian distance function} \label{cnpda}
Let us define the functions dependent on a choice of $r\in M$, $d_r=d(r,\cdot)$, and $d^r=d(\cdot,r)$.

In the remainder of this work we shall be interested in the problem of establishing properties of the Lorentzian distance function that might promote a generic spacetime to a globally hyperbolic one. Theorem \ref{prtr} already provides a solution to this problem since the chronological relation, and hence the chronological diamonds, are expressible through the Lorentzian distance
\[
I=d^{-1}((0,+\infty]), \qquad I^+(p)=d_p^{-1}((0,+\infty]), \qquad I^-(q)=(d^q)^{-1}((0,+\infty]).
\]
Still, the formulation of global hyperbolicity would be rather indirect. We would prefer a formulation phrased directly in terms of continuity properties of $d$.

To start with, we shall explore some causality properties expressed via the Lorentzian distance function.\\

\begin{proposition} \label{cnqxz}
On a spacetime $(M,g)$ for any two events $p,q\in M$, the inclusion $I^+(q)\subset I^+(p)$ is equivalent to the property $d_q\le d_p$.
Dually, the   inclusion $I^-(p)\subset I^-(q)$ is equivalent to the property $d^p\le d^q$.\\
\end{proposition}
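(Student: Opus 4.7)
The plan is to use two standard facts: the characterization $d(x,y)>0 \Leftrightarrow x\ll y$, and the reverse triangle inequality $d(x,z)\ge d(x,y)+d(y,z)$ whenever $x\le y\le z$ (both hold in any spacetime). Under time reversal the two stated equivalences exchange, so it suffices to prove the first one; the past version then follows verbatim.

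For the easy direction ($d_q\le d_p \Rightarrow I^+(q)\subset I^+(p)$), I would take $r\in I^+(q)$, observe $d(q,r)>0$, conclude $d(p,r)\ge d(q,r)>0$, and therefore $r\in I^+(p)$.

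For the converse, I would assume $I^+(q)\subset I^+(p)$ and fix an arbitrary $r\in M$. If $d(q,r)=0$ there is nothing to prove. Otherwise $r\in I^+(q)$, so for each $\varepsilon>0$ I would pick a future-directed timelike curve $\gamma\colon[0,1]\to M$ with $\gamma(0)=q$, $\gamma(1)=r$, and Lorentzian length $L(\gamma)>d(q,r)-\varepsilon$. For every $s\in(0,1)$, $\gamma(s)\in I^+(q)\subset I^+(p)$, hence $p\ll\gamma(s)\ll r$, so the reverse triangle inequality yields
\[
d(p,r)\ \ge\ d(p,\gamma(s))+d(\gamma(s),r)\ \ge\ d(\gamma(s),r)\ \ge\ L\bigl(\gamma|_{[s,1]}\bigr).
\]
Letting $s\to 0^+$, the right-hand side tends to $L(\gamma)>d(q,r)-\varepsilon$, and arbitrariness of $\varepsilon$ gives $d(p,r)\ge d(q,r)$, as desired.

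The only nontrivial step — and what I expect to be the main obstacle — is the pushup argument in the hard direction. The hypothesis $I^+(q)\subset I^+(p)$ is purely set-theoretic and does not immediately yield a distance comparison; the trick is to apply it at each interior point $\gamma(s)$ of an almost-length-maximizing curve from $q$ to $r$, then recover the length of $\gamma$ via the reverse triangle inequality as $s\to 0$. No limit-curve, compactness, or causality-ladder arguments are needed.
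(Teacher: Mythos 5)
Your proof is correct and follows essentially the same route as the paper: both arguments take a nearly length-maximizing timelike curve from $q$ to $r$, truncate a small initial segment, use the hypothesis to get $p\ll\gamma(s)$, and bound $d(p,r)$ from below by the length of the truncated curve (the paper does this by concatenation, you by the reverse triangle inequality, which is the same thing). The only cosmetic caveat is that since $d$ is not assumed finite here, the phrase ``$L(\gamma)>d(q,r)-\varepsilon$'' should be replaced by ``$L(\gamma)>m$ for an arbitrary $m<d(q,r)$'', exactly as the paper does, so that the case $d(q,r)=+\infty$ is covered.
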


This result is a bit surprising since the function $d$  contains much more information than the chronological relation.
Note that we do not assume that $d$ is finite or continuous.

\begin{proof}
Suppose that  $I^+(q)\subset I^+(p)$  and let $r \in M$. If $r\notin I^+(q)$ then $d_q(r)=0$ and the inequality we wish to prove follows. If $r\in I^+(q)$ for any $m>0$, $m<d_q(r)$, there is a timelike curve $\gamma:[0,1]\to M$ connecting $q=\gamma(0)$ to $r=\gamma(1)$ of Lorentzian length $\ell(\gamma)> m$. Let $\delta>0$ be such that  $\ell(\gamma\vert_{[\delta,1]})> m$. But $\gamma(\delta)\gg q$ hence $\gamma(\delta)\gg p$, which implies $d_p(r)\ge  \ell(\gamma\vert_{[\delta,1]})> m$. Since $$m<d_q(r)$$ is arbitrary, $d_p(r)\ge d_q(r)$. By the arbitrariness of $r$,  $d_q\le d_p$.

For the converse, suppose that  $d_q\le d_p$, then if $r\in I^+(q)$ we have $d_q(r)>0$ so $d_p(r)>0$ which implies $r\in I^+(p)$. By the arbitrariness of $r$, $I^+(q)\subset I^+(p)$.
\end{proof}

We recall \cite[Def.\ 4.46]{minguzzi18b} that a spacetime is {\em future distinguishing} if $I^+(p)=I^+(q)\Rightarrow p=q$ (and similarly in the past case). A  spacetime is {\em weakly distinguishing} if $I^+(p)=I^+(q)$ and $ I^-(p)=I^-(q)\Rightarrow p=q$, cf.\ \cite[Def.\ 4.47]{minguzzi18b}. A  spacetime is {\em distinguishing} if $I^+(p)=I^+(q)$ or $ I^-(p)=I^-(q)\Rightarrow p=q$, cf.\ \cite[Def.\ 4.59]{minguzzi18b}.

It is convenient to introduce the following notions\\

\begin{definition}
We say that the {\em the Lorentzian distance future distinguishes events} (the spacetime is future $d$-distinguishing) if  $d_p= d_q \Rightarrow p=q$, and that is {\em past  distinguishes events} if $d^p= d^q \Rightarrow p=q$.  The spacetime is {\em  $d$-distinguishing} if it is both future and past $d$-distinguishing, namely:  $d^p= d^q$ or $d_p= d_q \Rightarrow p=q$. We say that the spacetime is {\em weakly $d$-distinguishing} if  $d^p= d^q$ and $d_p= d_q \Rightarrow p=q$.\\
\end{definition}

Note that $d$-distinction implies `future or past $d$-distinction' which implies weak $d$-distinction.\\

\begin{remark}
 Weak $d$-distinction is called just $d$-distinction (or it is said that the Lorentzian distance distinguishes events) in \cite{minguzzi22,minguzzi24b} as this is the only version that plays a role in those works.\\
 \end{remark}

An immediate consequence of Prop.\  \ref{cnqxz} is\\

\begin{corollary} \label{moprs}
On a spacetime $(M,g)$ (future/past/weak) $d$-distinction is equivalent to  (resp.\ future/past/weak)  distinction.\\
\end{corollary}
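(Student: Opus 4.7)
The plan is to reduce the Corollary to a direct consequence of Proposition \ref{cnqxz}. That Proposition gives, for any $p,q\in M$, the equivalences $I^+(q)\subset I^+(p) \Leftrightarrow d_q\le d_p$ and $I^-(p)\subset I^-(q) \Leftrightarrow d^p\le d^q$. Interchanging the roles of $p$ and $q$ and combining the two one-sided inclusions into a set equality, one obtains
\[
I^+(p)=I^+(q) \ \Longleftrightarrow\ d_p=d_q, \qquad I^-(p)=I^-(q) \ \Longleftrightarrow\ d^p=d^q.
\]

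With these two equivalences at hand, the three cases of the Corollary follow by pure logical substitution in the implications that define the various notions of distinction. For future distinction, the antecedent $I^+(p)=I^+(q)$ of the implication ``$\Rightarrow p=q$'' may be replaced by $d_p=d_q$, so future distinction and future $d$-distinction are literally the same statement. The past case is handled by the dual equivalence. The weak case follows by forming the conjunction of the two antecedents on each side, and the full $d$-distinction notion (``or'' form, defined just above the Corollary) is handled analogously by taking disjunctions.

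No genuine obstacle is anticipated, since the Corollary is essentially a bookkeeping exercise once Proposition \ref{cnqxz} has been established; the only point worth emphasizing is that the equivalence of \emph{set equalities} with \emph{function equalities} uses both directions of Proposition \ref{cnqxz}, applied with $p$ and $q$ swapped, and does not require $d$ to be finite or continuous.
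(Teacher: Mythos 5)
Your proposal is correct and matches the paper's (implicit) argument: the paper simply declares the corollary an immediate consequence of Proposition \ref{cnqxz}, and your reduction — applying the proposition with $p$ and $q$ swapped to convert the set equalities $I^\pm(p)=I^\pm(q)$ into the function equalities $d_p=d_q$, $d^p=d^q$, then substituting into the definitions — is exactly the intended bookkeeping. Nothing is missing.
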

 One might ask why introducing new terminology of `$d$-' type if the equivalence holds. The reason is that for less regular spacetimes the equivalence might not hold anymore. This problem will be explored later on.
%
%
%
%
%

We recall \cite[Sec.\ 4.1]{minguzzi18b} that a spacetime is {\em future reflecting} if $I^-(p)\subset I^-(q) \Rightarrow I^+(p)\supset I^+(q)$. The reverse direction of the implication holds for the {\em past reflecting} case. A spacetime is {\em reflecting} if it is both past and future reflecting, that is if  $I^-(p)\subset I^-(q) \Leftrightarrow I^+(p)\supset I^+(q)$.\\

\begin{definition}
A spacetime is {\em future $d$-reflecting} if $d^p \le d^q \Rightarrow d_q\le d_p$.  A spacetime is {\em past $d$-reflecting} if $d^p \le d^q \Leftarrow d_q\le d_p$.   A spacetime is $d$-reflecting if it is both past and future $d$-reflecting, that is if  $d^p \le d^q \Leftrightarrow d_q\le d_p$.\\
\end{definition}

Another immediate consequence of Prop.\  \ref{cnqxz} is\\

\begin{corollary} \label{mopq}
On a spacetime $(M,g)$ (future/past) $d$-reflectivity is equivalent to  (resp.\ future/past)  reflectivity.\\
\end{corollary}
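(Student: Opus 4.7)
The plan is to obtain both equivalences as immediate consequences of Proposition \ref{cnqxz}, essentially by translating the inclusions defining reflectivity into inequalities of distance functions and vice versa.

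First I would recall the two key equivalences supplied by Proposition \ref{cnqxz}: for all $p,q\in M$,
\[
I^+(q)\subset I^+(p)\iff d_q\le d_p,\qquad I^-(p)\subset I^-(q)\iff d^p\le d^q.
\]
These translations are point-free: no continuity, finiteness, or causality assumption beyond those built into the definition of a spacetime is required.

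Next I would rewrite the definition of future $d$-reflectivity, ``$d^p\le d^q\Rightarrow d_q\le d_p$'', by substituting the left-hand inequality using the second equivalence above and the right-hand inequality using the first equivalence. This yields exactly the statement ``$I^-(p)\subset I^-(q)\Rightarrow I^+(q)\subset I^+(p)$'', which is the definition of future reflectivity. The same substitution in the reverse direction turns future reflectivity back into future $d$-reflectivity, establishing the equivalence. The past case is handled identically, swapping the direction of the implication, and the full $d$-reflectivity case follows by combining the two.

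There is no substantive obstacle here: once Proposition \ref{cnqxz} is in hand, the corollary is a one-line logical rewriting, and the proof reduces to pointing out which equivalence one is applying on each side of the implication. The only care required is to match the order of $p$ and $q$ consistently in the two equivalences, since one of them flips the roles of the two events (inclusion of $I^+$'s reverses the order with respect to the inequality of $d_{\cdot}$'s, whereas for $I^-$'s and $d^{\cdot}$'s the order is preserved).
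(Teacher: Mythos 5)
Your proposal is correct and is exactly the paper's argument: the corollary is stated there as an immediate consequence of Proposition \ref{cnqxz}, obtained by substituting the equivalences $I^+(q)\subset I^+(p)\iff d_q\le d_p$ and $I^-(p)\subset I^-(q)\iff d^p\le d^q$ into the definitions of (future/past) reflectivity. Nothing further is needed.
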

Note that the validity of ($d$-)reflectivity makes all the weak/past/future ($d$-)distinction properties coincide.


By using the Lorentzian distance we can characterize causal continuity.\\

\begin{theorem} \label{vkqpd}
A spacetime $(M,g)$ is causally continuous iff it is weakly $d$-distinguishing and $d$-reflective.\\
\end{theorem}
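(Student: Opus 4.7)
The plan is to reduce everything to the classical characterization of causal continuity, namely that $(M,g)$ is causally continuous iff it is distinguishing and reflecting. The two corollaries just proved (Cor.\ \ref{moprs} and Cor.\ \ref{mopq}) are tailor-made to translate statements about $d$ into statements about the chronological futures and pasts, so the bulk of the work is already done.

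First I would apply Cor.\ \ref{moprs} in its weak form to replace ``weakly $d$-distinguishing'' by the classical condition ``weakly distinguishing'', and Cor.\ \ref{mopq} to replace ``$d$-reflective'' by ``reflecting''. Thus the right-hand side of the theorem becomes the purely causal condition ``weakly distinguishing and reflecting''. It then suffices to show that, on a smooth spacetime, this condition is equivalent to ``distinguishing and reflecting''.

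The nontrivial direction is that weak distinction plus reflectivity implies distinction. Here I would use exactly the observation highlighted right after the definition of $d$-reflectivity: under reflectivity, the equivalence $I^+(p)\supset I^+(q)\Leftrightarrow I^-(p)\subset I^-(q)$ holds, so assuming $I^+(p)=I^+(q)$ yields, via two applications of reflectivity, $I^-(p)=I^-(q)$; weak distinction then forces $p=q$, giving future distinction. The analogous argument starting from $I^-(p)=I^-(q)$ gives past distinction, hence full distinction. The converse direction is immediate since distinction trivially implies weak distinction.

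I do not expect a genuine obstacle here: the proof is a direct bookkeeping argument leaning on the two corollaries and the standard fact, recalled in the paragraph preceding the theorem, that reflectivity collapses the various distinction hierarchies. The only subtlety is ensuring that the version of causal continuity one starts from is the distinguishing plus reflecting definition; if instead one adopts an inner/outer continuity formulation for the maps $p\mapsto I^\pm(p)$, one should quote the standard equivalence with ``distinguishing and reflecting'' (see, e.g., \cite[Sec.\ 4.1]{minguzzi18b}) before applying the corollaries.
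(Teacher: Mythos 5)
Your proposal is correct and follows essentially the same route as the paper: both reduce the statement via Corollaries \ref{moprs} and \ref{mopq} to a purely causal characterization of causal continuity, the paper by directly citing the improved characterization ``weak distinction and reflectivity'' \cite{budic74}\cite[Def.\ 4.109]{minguzzi18b}, and you by deriving that improvement from the ``distinguishing and reflecting'' form through the (correct) observation that reflectivity collapses weak distinction to full distinction. The small extra argument you supply is exactly the content of the cited improvement, so there is no gap.
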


\begin{proof}
This follows from the improved characterization of causal continuity: weak distinction and reflectivity \cite{budic74} \cite[Def.\ 4.109]{minguzzi18b}.
\end{proof}

The following result is classical \cite[Thm.\ 4.24]{beem96} \cite[Prop.\ 5.2]{minguzzi18b} but is phrased here in a stronger form than usual.

In statements concerning continuity of the functions $d, d_p, d^p, p\in M$, it is understood that such property is with respect to the manifold topology of $M$ unless stated otherwise.\\

\begin{proposition} \label{nnrt}
Let $T$ be a topology on $M$
such that $p\in \overline{I^+(p)}\cap \overline{I^-(p)}$ for every $p\in M$ (e.g.\ the Alexandrov or the manifold topology). The upper $T$-semi-continuity of the functions $d^r$, $r\in M$, where they vanish implies future reflectivity. Similarly, the upper $T$-semi-continuity of the functions $d_r$, $r\in M$, where they vanish implies past reflectivity.\\
\end{proposition}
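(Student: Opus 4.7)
The plan is to prove the contrapositive-style statement by a clean argument using the reverse triangle inequality and the density hypothesis $p \in \overline{I^-(p)}^T$. By Proposition \ref{cnqxz}, future reflectivity of the spacetime is equivalent to the implication $I^-(p) \subset I^-(q) \Rightarrow I^+(p) \supset I^+(q)$. So, assuming $I^-(p) \subset I^-(q)$ and picking $r \in I^+(q)$, I would aim to show $r \in I^+(p)$. Proceeding by contradiction, I would suppose $r \notin I^+(p)$, which means $d^r(p) = 0$.

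Now the hypothesis of upper $T$-semi-continuity at $p$ can be invoked (precisely because $d^r$ vanishes there). Setting $\alpha := d^r(q) > 0$, I would produce a $T$-open neighborhood $U$ of $p$ on which $d^r < \alpha$. Then using the density assumption $p \in \overline{I^-(p)}^T$, I would select a point $p' \in I^-(p) \cap U$. Because $p' \ll p$ and $I^-(p) \subset I^-(q)$, one gets $p' \ll q$, hence the chain $p' \ll q \ll r$.

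The final step is the reverse triangle inequality along this timelike chain: $d(p',r) \ge d(p',q) + d(q,r) \ge \alpha$. This contradicts $p' \in U$, on which $d^r < \alpha$. The past case, i.e.\ past reflectivity from the corresponding semi-continuity of $d_r$, follows verbatim by the time-dual argument.

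I do not really expect a serious obstacle here. The subtle point is to recognize that one only ever needs the semi-continuity hypothesis at the single point $p$ and only in the regime where $d^r$ vanishes, which is precisely what is assumed, and that a fixed positive constant $\alpha = d^r(q)$ is available to set the threshold in the semi-continuity statement; the density hypothesis $p \in \overline{I^-(p)}^T$ is exactly what converts the topological information near $p$ into genuine chronological information strictly below $p$ that can be fed into the reverse triangle inequality.
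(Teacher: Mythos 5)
Your proof is correct and takes essentially the same route as the paper's: the paper writes out the dual (past) case and declares the other analogous, and in both arguments the failure of reflectivity is converted, via the density hypothesis $p\in\overline{I^-(p)}$ and the reverse triangle inequality $d(p',r)\ge d(p',q)+d(q,r)$, into points $T$-accumulating at $p$ on which $d^r\ge d(q,r)>0$ while $d^r(p)=0$, contradicting upper semi-continuity where $d^r$ vanishes. (The only cosmetic difference is your contrapositive-by-contradiction framing and the explicit threshold $\alpha=d(q,r)$; also note that the equivalence you attribute to Proposition \ref{cnqxz} is just the definition of future reflectivity.)
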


Note that in the statement the closure is with respect to $T$ and that in the proof we avoid use of sequences. Indeed, the first countability of $T$ is not assumed. The condition  $p\in \overline{I^+(p)}\cap \overline{I^-(p)}$,  means that the topology $T$ is fine but not too fine.

\begin{proof}
Let us prove the latter statement, the former being analogous. Suppose that past reflectivity does not hold, then we can find $p,q\in M$ such that $I^+(q)\subset I^+(p)$ but $I^-(p)$ is not a subset of $I^-(q)$, that is, there is $r\ll p$ such that $r\notin I^-(q)$. But $q\in \overline{I^+(q)}$ thus $q\in \overline{I^+(p)}\subset \overline{I^+(r)}$. For every $T$-open set $O$ of $q$ there is $q'\in O\cap I^+(p)$ and
\[
d(r,q')\ge d(r,p)+d(p,q')\ge d(r,p)>0
\]
which proves that $d_r$ has a discontinuity at $q$ where it vanishes.
\end{proof}

\begin{remark}
The condition ``$\forall p,q\in M$, $d(p,q)=\textrm{inf}_{q'\in I^+(q)} d(p,q')$ and  $\forall p,q\in M$, $d(p,q)=\textrm{inf}_{p'\in I^-(p)} d(p',q)$'' easily implies that $d^p, d_p, p\in M$ are upper semi-continuous and hence continuous in the Alexandrov topology. Hence under this condition reflectivity holds.\\
\end{remark}

Proposition \ref{nnrt} allows us to replace reflectivity in a number of results, for instance\\

\begin{theorem} \label{vkqpy}
Every weakly $d$-distinguishing spacetime for which $d$ is continuous on $d^{-1}(0)$ is casually continuous.\\
\end{theorem}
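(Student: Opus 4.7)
The plan is to reduce this statement directly to the characterization of causal continuity already established in Theorem \ref{vkqpd}, i.e.\ to show that the stated hypotheses imply $d$-reflectivity (the other needed ingredient, weak $d$-distinction, is part of the assumption).

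First I would unpack the hypothesis ``$d$ is continuous on $d^{-1}(0)$''. This means that at every pair $(p,q) \in M\times M$ with $d(p,q)=0$, the two-variable function $d$ is continuous. Fixing one of the arguments, this yields that each single-variable function $d_r$ is continuous at every point $q$ with $d_r(q)=0$, and similarly that each $d^r$ is continuous at every point $p$ with $d^r(p)=0$. In particular both $d_r$ and $d^r$ are upper semi-continuous where they vanish, with respect to the manifold topology.

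Next I would invoke Proposition \ref{nnrt} with $T$ taken to be the manifold topology: the condition $p \in \overline{I^+(p)} \cap \overline{I^-(p)}$ holds in every (smooth) spacetime, so the hypotheses of the proposition apply. The upper semi-continuity of $d^r$ where it vanishes yields future reflectivity, and the upper semi-continuity of $d_r$ where it vanishes yields past reflectivity. Hence $(M,g)$ is reflecting, and by Corollary \ref{mopq} it is $d$-reflective.

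Finally, combining $d$-reflectivity with the assumed weak $d$-distinction, Theorem \ref{vkqpd} immediately yields causal continuity. There is really no ``main obstacle'' here beyond carefully separating the two variables in the continuity hypothesis in order to feed them into Proposition \ref{nnrt}; everything else is a direct application of the results already proved.
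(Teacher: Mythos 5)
Your proof is correct and follows exactly the route the paper intends: the theorem is stated right after the remark that Proposition \ref{nnrt} ``allows us to replace reflectivity,'' so the intended argument is precisely to extract upper semi-continuity of $d_r$ and $d^r$ where they vanish, obtain ($d$-)reflectivity via Proposition \ref{nnrt} and Corollary \ref{mopq}, and conclude with Theorem \ref{vkqpd}. No gaps.
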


Let us compare the continuity of $d$ with that of the functions $d_p,d^p, p\in M$.

We already recalled that in the Alexandrov topology sets of the form $I(p,q)$ form a basis, not just a subbasis. Also sets of the form $I^+(p)$ and $I^-(q)$ are open in the Alexandrov topology.\\

\begin{proposition} \label{bpmf}
The functions $d$, $d_p$ and $d^p$, $p\in M$, are lower semi-continuous for the Alexandrov topology and hence any finer topology (e.g.\ the manifold topology). Conversely, if the functions $d_p$ and $d^p$, $p\in M$, are  lower semi-continuous for a topology then that topology contains the Alexandrov topology.\\
\end{proposition}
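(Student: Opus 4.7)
For the forward direction, the key tool is the reverse triangle inequality combined with the fact that the sets $I^{\pm}(r)$ are subbasic elements of the Alexandrov topology, and that $d$ is a supremum of lengths of timelike curves. I will treat $d_p$, $d^p$ first and then bootstrap to $d$.

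For the lower semi-continuity of $d_p$ at a point $q$, fix $\alpha < d(p,q)$. If $\alpha < 0$ there is nothing to prove, so assume $d(p,q) > \alpha \geq 0$, in particular $p \ll q$. Choose $\alpha' \in (\alpha, d(p,q))$ and a timelike curve $\gamma\colon[0,1]\to M$ from $p$ to $q$ with $\ell(\gamma) > \alpha'$. Pick $\delta$ close to $1$ so that $\ell(\gamma|_{[0,\delta]}) > \alpha'$ and set $p' := \gamma(\delta)$. Then $q \in I^+(p')$, and for any $q' \in I^+(p')$ we have $p \ll p' \ll q'$, so the reverse triangle inequality gives
\[
d(p,q') \geq d(p,p') + d(p',q') \geq \ell(\gamma|_{[0,\delta]}) > \alpha' > \alpha.
\]
Since $I^+(p')$ is Alexandrov-open and contains $q$, this is the required neighborhood. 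The argument for $d^p$ is dual, cutting off a short initial piece of $\gamma$ and using $I^-(p'')$ for a suitable $p'' = \gamma(\delta_1)$ with $\delta_1$ close to $0$.

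For $d$ as a function on $M\times M$ equipped with the product Alexandrov topology, combine the two cuts: given $(p,q)$ with $d(p,q) > \alpha$, choose $\alpha'$ and $\gamma$ as above and pick $0 < \delta_1 < \delta_2 < 1$ so that $\ell(\gamma|_{[\delta_1,\delta_2]}) > \alpha'$. Setting $p'' := \gamma(\delta_1)$ and $p' := \gamma(\delta_2)$, the set $I^-(p'') \times I^+(p')$ is a product-Alexandrov open neighborhood of $(p,q)$, and for any $(p_0,q_0)$ in it the chain $p_0 \ll p'' \ll p' \ll q_0$ together with the reverse triangle inequality yields $d(p_0,q_0) \geq d(p'',p') > \alpha'$. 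The statement then passes to any finer topology automatically.

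For the converse, if $d_p$ and $d^p$ are $T$-lower semi-continuous for every $p$, then the preimages $d_p^{-1}((0,\infty]) = I^+(p)$ and $(d^p)^{-1}((0,\infty]) = I^-(p)$ are $T$-open for every $p$. These sets form a subbasis for the Alexandrov topology, so the latter is contained in $T$. There is no real obstacle here; the only subtle point worth a remark is that one needs to work with curves of length strictly greater than some $\alpha'>\alpha$ (rather than directly with $d$), since the supremum defining $d$ may not be attained, but this is immediate from the definition.
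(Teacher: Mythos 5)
Your proof is correct and follows essentially the same route as the paper: the forward direction is the standard cut-off-the-endpoints argument with the reverse triangle inequality (which the paper simply cites as "the usual proof" of lower semi-continuity of $d$), and the converse via $d_p^{-1}((0,\infty])=I^+(p)$ and $(d^q)^{-1}((0,\infty])=I^-(q)$ is exactly the paper's argument.
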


In other words,  the lower semi-continuity property is equivalent to  the property that the topology contains the Alexandrov topology.

\begin{proof}
The proof of the first statement goes as the usual one for the lower semi-continuity of $d$, e.g.\ \cite[Thm.\ 2.32]{minguzzi18b}.
For the second statement, lower semi-continuity of $d_p$ implies that the set $d_p^{-1}((0,+\infty])=I^+(p)$ is open, and lower semi-continuity of $d^q$ implies that the set $(d^q)^{-1}((0,+\infty])=I^-(q)$ is open, hence the sets of the form $I(p,q)$ are open.
\end{proof}

\begin{remark}
We are not concerned, in this section, with issues connected to low regularity since, as stated in the introduction, our spacetime $(M,g)$ is sufficiently regular. In any case, it is useful to recall  that some regularity is needed to ensure the lower semi-continuity of $d$ in the manifold topology, for instance (in the broad framework of  proper Lorentz-Finsler spaces) local Lipschitzness \cite[Thm.\ 2.53]{minguzzi17}, or at least, as the proof of that result shows, some no bubbling/causal space/push up condition  suffices  (see \cite[Thm.\ 2.8]{minguzzi17} for the equivalence). Without such conditions the lower semi-continuity  property does not hold but can be recovered in a $C^0$ framework modifying the notion of causal curve and Lorentzian distance \cite{ling24}.\\
\end{remark}

\begin{theorem} \label{clpqo}
If the Lorentzian distance $d$ is $T\times T$-continuous then the functions $d_p:=d(p,\cdot)$ and $d^p:=d(\cdot, p)$ are $T$-continuous for every $p\in M$. The converse holds if $T$ is a topology  such that $p\in \overline{I^+(p)}\cap \overline{I^-(p)}$  for every $p\in M$,
e.g.\ the manifold topology or the Alexandrov topology.\\
\end{theorem}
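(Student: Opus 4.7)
The forward direction is immediate: the inclusion $q\mapsto(p,q)$ is $T$-to-$T\times T$ continuous, so $d_p$ is the composition of this inclusion with the $T\times T$-continuous function $d$, and likewise for $d^p$. For the converse I would establish $T\times T$ lower and upper semi-continuity of $d$ separately, needing the boundary condition $p\in\overline{I^+(p)}\cap\overline{I^-(p)}$ only for the upper half.

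For lower semi-continuity at a point $(p,q)$ with $d(p,q)>c$ (trivial if $c<0$), I would choose a timelike curve $\gamma\colon[0,1]\to M$ from $p$ to $q$ with $\ell(\gamma)>c$ and truncate it at small $\epsilon>0$, setting $p':=\gamma(\epsilon)$ and $q':=\gamma(1-\epsilon)$, so that $p\ll p'$, $q'\ll q$ and $d(p',q')\ge\ell(\gamma|_{[\epsilon,1-\epsilon]})>c$. The sets $U:=(d^{p'})^{-1}((0,\infty])=I^-(p')$ and $V:=d_{q'}^{-1}((0,\infty])=I^+(q')$ are $T$-open by the assumed $T$-continuity of $d^{p'}$ and $d_{q'}$, and they contain $p$ and $q$ respectively. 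For any $(u,v)\in U\times V$ the chain $u\ll p'\ll q'\ll v$ together with the reverse triangle inequality forces $d(u,v)\ge d(p',q')>c$.

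For upper semi-continuity at $(p,q)$ with $d(p,q)<c$ (hence $c>0$), I would pick $\epsilon>0$ with $d(p,q)+2\epsilon<c$. By $T$-continuity of $d^q$ the set $\{r:d^q(r)<d(p,q)+\epsilon\}$ is a $T$-neighborhood of $p$, and since $p\in\overline{I^-(p)}$ it contains a point $p_-\ll p$ with $d(p_-,q)<d(p,q)+\epsilon$. Applying the same idea to the $T$-continuous function $d_{p_-}$ and using $q\in\overline{I^+(q)}$ yields a point $q_+\gg q$ with $d(p_-,q_+)<d(p,q)+2\epsilon<c$. Setting $U:=I^+(p_-)$ and $V:=I^-(q_+)$, both $T$-open and containing $p$ and $q$, one has $p_-\ll u$ and $v\ll q_+$ for every $(u,v)\in U\times V$: if $u\ll v$ the reverse triangle inequality gives $d(u,v)\le d(p_-,q_+)<c$, and otherwise $d(u,v)=0<c$.

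The main obstacle is precisely the case $p\not\ll q$ (hence $d(p,q)=0$) inside the upper semi-continuity step, since arbitrarily near $(p,q)$ there could be pairs $(u,v)$ with $u\ll v$ and \emph{a priori} uncontrolled $d(u,v)$; the hypothesis $p\in\overline{I^+(p)}\cap\overline{I^-(p)}$ enters exactly here, to enclose $p$ and $q$ between chronologically related points $p_-,q_+$ whose distance is pushed below $c$ by the continuity of $d^q$ and $d_{p_-}$, after which the reverse triangle inequality supplies a uniform ceiling on $d(u,v)$ over the product neighborhood $I^+(p_-)\times I^-(q_+)$.
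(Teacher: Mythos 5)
Your proof is correct and follows essentially the same route as the paper's: the forward direction by composing $d$ with the inclusion $q\mapsto(p,q)$, and the converse by splitting into lower semi-continuity (which the paper simply cites from its Prop.~\ref{bpmf} rather than reproving) and upper semi-continuity, the latter obtained exactly as in the paper by using $p\in\overline{I^-(p)}$ and $q\in\overline{I^+(q)}$ to squeeze $(p,q)$ between chronologically related points $p_-\ll p$, $q\ll q_+$ with $d(p_-,q_+)$ controlled via upper semi-continuity of $d^q$ and $d_{p_-}$, and then bounding $d$ on $I^+(p_-)\times I^-(q_+)$ by the reverse triangle inequality. Your explicit handling of the case $u\not\ll v$ (where $d(u,v)=0$) is a detail the paper leaves implicit, but the argument is the same.
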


Once again the closure in this statement is with respect to $T$.


\begin{proof}
Note that whatever the direction considered, the assumption implies that $T$ is finer than the Alexandrov topology (because the $T$-continuity of $d_p, d^p$ implies the $T$-openness of $I^+(p), I^-(p)$).
Suppose that $d$ is $T\times T$-continuous. The function $\phi_p:M\to M\times M$, $q\to (p,q)$ is $T$-continuous, so is the function $d_p=d\circ \phi_p$. Similarly, $d^p$ is $T$-continuous.

For the converse, we already know by Prop.\ \ref{bpmf} that $d$ is lower semi-continuous with respect to $T\times T$  so it is sufficient to prove upper semi-continuity  with respect to $T\times T$  . Let $(p,q)$ be such that $d(p,q)<\infty$ (otherwise upper semi-continuity at $(p,q)$ is clear), and let $\epsilon>0$.  Let $p'\ll p$. By upper semi-continuity of $d^q$ and the condition $p\in \overline{I^-(p)}$ the point $p'$ can be chosen so close to $p$ that $d(p',q)<d(p,q)+\epsilon/2$. Let $q'\gg q$. By upper semi-continuity of $d_{p'}$ and the condition $q\in \overline{I^+(q)}$ the point $q'$ can be chosen so close to $q$ that $d(p',q')< d(p',q)+\epsilon/2$, thus $d(p',q')< d(p,q)+\epsilon$. The open sets $I^+(p')\ni p$, $I^-(q')\ni q$ belong to the Alexandrov topology and hence to the topology $T$; we have $(p,q)\in I^+(p')\times I^-(q')$ and on this product $T\times T$-open set  the Lorentzian distance is bounded by $d(p',q')$ and hence by $d(p,q)+\epsilon$, which proves the upper semi-continuity of $d$ with respect to $T\times T$.
\end{proof}

In the next result the topology involved is the manifold topology.\\
\begin{corollary} \label{coer}
The Lorentzian distance $d$ is continuous  iff the functions $d_p,d^p, p\in M$, are all continuous.\\
\end{corollary}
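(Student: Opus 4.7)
The plan is to derive this corollary as an immediate application of Theorem \ref{clpqo} with $T$ taken to be the manifold topology. The forward implication is essentially automatic: if $d \colon M \times M \to [0,\infty]$ is continuous in the product manifold topology, then composing with the continuous inclusion $q \mapsto (p,q)$ (resp.\ $q \mapsto (q,p)$) shows that $d_p$ and $d^p$ are manifold-continuous for every $p\in M$. This is exactly the content of the first sentence of Theorem \ref{clpqo}.

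For the converse, I need to verify that the manifold topology satisfies the hypothesis $p\in \overline{I^+(p)}\cap \overline{I^-(p)}$ for every $p\in M$ that appears in Theorem \ref{clpqo}. This is standard in smooth Lorentzian geometry: through any event $p$ one can choose a smooth timelike curve $\gamma\colon(-\varepsilon,\varepsilon)\to M$ with $\gamma(0)=p$, so that $\gamma(t)\in I^+(p)$ for $t>0$ small and $\gamma(t)\in I^-(p)$ for $t<0$ small, and $\gamma(t)\to p$ in the manifold topology as $t\to 0$. Once this is in place, the second sentence of Theorem \ref{clpqo} applies verbatim and yields the $T\times T$-continuity of $d$ from the $T$-continuity of the family $\{d_p, d^p : p\in M\}$.

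The main obstacle is essentially nil: everything substantive has already been done in Theorem \ref{clpqo}, and the only extra ingredient is the trivial observation about the manifold topology above. Thus the corollary reduces to a one-line appeal to Theorem \ref{clpqo}, with the remark that the condition $p\in \overline{I^+(p)}\cap \overline{I^-(p)}$ (with closure in the manifold topology) holds automatically in any smooth spacetime.
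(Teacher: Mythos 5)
Your proposal is correct and is exactly the intended argument: the paper states Corollary \ref{coer} as an immediate specialization of Theorem \ref{clpqo} to $T$ equal to the manifold topology, for which the hypothesis $p\in \overline{I^+(p)}\cap \overline{I^-(p)}$ holds by the existence of timelike curves through every event. Nothing further is needed.
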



The key point is that by using only the Lorentzian distance function, we have been able to derive properties as robust as strong causality, as demonstrated in Theorem \ref{vkqpd}. This is significant because it establishes the Hausdorff property of the Alexandrov topology, thereby ensuring its equivalence with the manifold topology \cite[Thm.\ 4.75]{minguzzi18b}.


We recall the following key result \cite[Prop.\ 1.6]{minguzzi22} \\

\begin{proposition} \label{jept}
Let $\mathcal{A}$ be a family of open subsets for a topology such that (i) $\mathcal{A}$ separates points and (ii) every point admits an open neighborhood
in $\mathcal{A}$ contained in a compact set. Then $\mathcal{A}$ is a subbasis for the topology.\\
\end{proposition}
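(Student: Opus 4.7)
The plan is to prove that $T'$, the topology generated by $\mathcal{A}$ as a subbasis, coincides with the ambient topology $T$. The inclusion $T'\subseteq T$ is immediate from $\mathcal{A}\subseteq T$, so the substance of the proof is to show every $T$-open set is $T'$-open: given $p\in U\in T$, one must exhibit a $T'$-open neighborhood of $p$ contained in $U$.

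The first step is localization via (b). Pick $N\in\mathcal{A}$ with $p\in N\subseteq K$ for some $T$-compact $K$, and replace $U$ by $U\cap N$ to assume $U\subseteq N\subseteq K$. The set $F:=K\setminus U$ is then $T$-closed in $K$, hence $T$-compact, and $p\notin F$.

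The core mechanism is the classical fact that a continuous bijection from a compact space onto a Hausdorff space is a homeomorphism. Since $T'\subseteq T$, the set $K$ is also $T'$-compact, and the identity
\[
\mathrm{id}\colon (K,T|_K)\longrightarrow (K,T'|_K)
\]
is a continuous bijection. Provided $T'|_K$ is Hausdorff, $\mathrm{id}$ is a homeomorphism, so $T|_K=T'|_K$. Consequently $U\cap K$ is $T'|_K$-open, i.e.\ $U\cap K=V\cap K$ for some $V\in T'$, and $V\cap N\in T'$ is the desired neighborhood of $p$ inside $V\cap K=U\cap K\subseteq U$.

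The decisive step, and the main obstacle, is therefore the Hausdorffness of $T'|_K$, which is exactly where (a) is essential. For $p\ne q$ in $K$, (a) furnishes only a single $\mathcal{A}$-set distinguishing them, and this pointwise information must be promoted to a pair of \emph{disjoint} $T'$-open neighborhoods of $p$ and $q$. The natural route is to iterate (a) combined with the compactness supplied by (b) in a finite-cover argument: for each point $r$ that needs to be separated from $p$, (a) yields an $\mathcal{A}$-witness; the $T$-compactness of the relevant closed piece of $K$ extracts a finite subcover, and the corresponding finite intersection of $\mathcal{A}$-sets around $p$ becomes disjoint from a finite union of $\mathcal{A}$-sets around $q$. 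This compactness-driven conversion of pointwise separation into the topological separation required for the homeomorphism argument is the crux, after which the three paragraphs above assemble into the full proof.
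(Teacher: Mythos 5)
The paper does not prove this proposition itself (it is quoted from \cite[Prop.\ 1.6]{minguzzi22}), so I can only assess your argument on its own terms. Your overall architecture is sound and almost certainly matches the intended one: localize inside a set $N\in\mathcal{A}$ with $N\subseteq K$ compact, observe that the identity $(K,T|_K)\to(K,T'|_K)$ is a continuous bijection from a compact space, and conclude $T|_K=T'|_K$ once $T'|_K$ is Hausdorff; the final bookkeeping with $V\cap N$ is correct.

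The genuine gap is in what you call the decisive step. You read (a) as furnishing, for $p\ne q$, only a single set of $\mathcal{A}$ containing one point and not the other, and you propose to ``promote'' this to disjoint $T'$-neighborhoods by a compactness/finite-cover argument. No such promotion is possible: pointwise ($T_0$/$T_1$-type) separation plus compactness does not yield Hausdorffness, and under that weak reading the proposition is simply false. Take $X=\{1,2\}$ with the discrete topology and $\mathcal{A}=\{\{1\},\{1,2\}\}$: every point has an $\mathcal{A}$-neighborhood inside the compact set $X$, and each pair of points is distinguished by a member of $\mathcal{A}$, yet $\mathcal{A}$ generates only the Sierpi\'nski topology, so it is not a subbasis for $T$. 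The finite-cover scheme you sketch is the standard argument for separating a point from a disjoint compact set \emph{in a space where pairs of points already have disjoint open neighborhoods}; without that input there is no neighborhood of $p$ to shrink, since the complement of an $\mathcal{A}$-witness around $r$ need not be open. The resolution is that (a) must be read in the strong sense --- for $p\ne q$ there exist \emph{disjoint} $A,B\in\mathcal{A}$ with $p\in A$, $q\in B$ --- which is how the paper uses it (it remarks that (a) and (b) make the topology Hausdorff, and in Theorem \ref{cjpr} it deduces (a) from Hausdorffness of the Alexandrov topology, for which $\mathcal{A}$ is a basis). Under that reading $T'$ is Hausdorff outright, the step you identify as the main obstacle is immediate, and the rest of your proof closes without further work.
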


Note that by (i) and (ii) the topology is Hausdorff and locally compact. Property (i) can be called {\em $\mathcal{A}$-Hausdorffness}, while property (ii) can be called {\em $\mathcal{A}$-local compactness}.

This leads to the following result which  is interesting as it characterizes\footnote{This is still somewhat indirect in option (c) which can be deduced from the Hausdorffness of the Alexandrov topology where chronological diamonds can be written using $d$.} global hyperbolicity using only properties of the Lorentzian distance functions and without invoking the manifold topology.\\

\begin{theorem} \label{cjpr}
Let  $(M,g)$ be a spacetime which is weakly $d$-distinguishing, such that the chronological diamonds are relatively compact in a topology $T$ that makes the functions $\{d_p,d^p, p\in M\}$ lower semi-continuous, and such that at least one of the following  conditions hold
\begin{itemize}
\item[(a)]  $d$-reflectivity,
\item[(b)] the functions $\{d_p,d^p, p\in M\}$  are actually $T$-continuous and $p\in \overline{I^+(p)}\cap \overline{I^-(p)}$ for every $p\in M$, where closure is in the topology $T$,
\item[(c)] strong causality.
\end{itemize}
Then the topology $T$, the Alexandrov topology, the manifold topology and the initial topology of the family $\{d_p,d^p, p\in M\}$ are all coincident. Moreover, $(M,g)$ is globally hyperbolic and so $d$ is finite and continuous (in the corresponding product topology).\\
\end{theorem}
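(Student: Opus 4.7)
First I would reduce each of the three hypotheses (a), (b), (c) to strong causality. The lower semi-continuity of $\{d_p,d^p\}$ in $T$ yields, via Proposition \ref{bpmf}, that $T$ contains the Alexandrov topology. In case (c) strong causality is immediate. In case (a), $d$-reflectivity together with weak $d$-distinction gives causal continuity by Theorem \ref{vkqpd}, which implies strong causality. In case (b), the $T$-continuity of each $d^r$ and $d_r$ at its respective zero set, together with the clause $p\in \overline{I^+(p)}\cap \overline{I^-(p)}$ with closure in $T$, produces both future and past reflectivity through Proposition \ref{nnrt}; Theorem \ref{vkqpd} then delivers causal continuity and hence strong causality. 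Thus in every case strong causality is secured.

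Strong causality forces, by the classical Alexandrov characterization \cite[Thm.\ 4.75]{minguzzi18b}, that the Alexandrov topology is Hausdorff and coincides with the manifold topology. To show $T$ also coincides with these, I apply Proposition \ref{jept} to the space $(M,T)$ with $\mathcal{A}$ the family of chronological diamonds: these are $T$-open since $T\supseteq$ Alexandrov topology; they separate points (Hausdorff separation in the coarser Alexandrov topology is inherited by the finer $T$); and every $p\in M$ lies inside some chronological diamond $I(p',p'')$, which is $T$-relatively compact by hypothesis. Proposition \ref{jept} then forces $\mathcal{A}$ to be a subbasis of $T$, but the topology generated by $\mathcal{A}$ as a subbasis is by definition the Alexandrov topology. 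Hence $T=$ Alexandrov topology $=$ manifold topology.

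With the manifold topology now identified with $T$, strong causality implies non-total imprisoning and every chronological diamond is relatively compact in the manifold topology, so global hyperbolicity follows from \cite[Cor.\ 3.3]{minguzzi08e}. Finiteness and continuity of $d$ are then classical consequences of global hyperbolicity. For the initial topology of $\{d_p,d^p\}$: lower semi-continuity guarantees it contains $d_p^{-1}((0,\infty])=I^+(p)$ and the symmetric $I^-(p)$ for every $p$, so it contains the Alexandrov topology; conversely, continuity of each $d_p$ and $d^p$ in the manifold topology means the manifold topology contains the initial topology. All four topologies therefore coincide.

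The main obstacle lies in case (b), where one must chain $T$-continuity of the component functions $\Rightarrow$ $d$-reflectivity $\Rightarrow$ causal continuity $\Rightarrow$ strong causality. The first implication rests on Proposition \ref{nnrt} and crucially exploits that $p\in \overline{I^+(p)}\cap \overline{I^-(p)}$ is formulated with the $T$-closure rather than the a priori unrelated manifold closure, with which $T$ is neither comparable nor a priori compatible. A secondary subtlety is verifying that chronological diamonds separate points in $T$ rather than merely in the Alexandrov topology, but this is automatic once $T$ is recognized as finer than the Alexandrov topology.
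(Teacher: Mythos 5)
Your proposal is correct and follows essentially the same route as the paper: reduce (a), (b), (c) to strong causality (with (b) feeding into (a) via Proposition \ref{nnrt} and Theorem \ref{vkqpd}), invoke the classical Alexandrov-topology characterization of strong causality, apply Proposition \ref{jept} to identify $T$ with the Alexandrov and manifold topologies, conclude global hyperbolicity from \cite[Cor.\ 3.3]{minguzzi08e}, and sandwich the initial topology of $\{d_p,d^p\}$ between the Alexandrov and manifold topologies.
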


We recall that ``in a topology $T$ that makes the functions $\{d_p,d^p, p\in M\}$ lower semi-continuous'' is equivalent to  ``in a topology $T$ finer than the Alexandrov topology'' by Prop.\ \ref{bpmf}.

The role of (a),(b), and (c) is that of ensuring that $T$ is not too fine. We do not know if they can be dispensed of.

\begin{proof}
Let $\mathcal{A}$ be the family of chronological diamonds. Clearly, $\mathcal{A} \subset T$. The family  $\mathcal{A}$ provides a basis for  the Alexandrov topology. If we assume $d$-reflectivity (option (a)) then, from Thm.\ \ref{vkqpd}, the spacetime is causally continuous hence strongly causal which implies that the Alexandrov topology is Hausdorff \cite[Thm.\ 4.75]{minguzzi18b} and hence $\mathcal{A}$ separates points. Option (b) gives  $d$-reflectivity due to Prop.\ \ref{nnrt} and so we are back to the previous case.

Assuming strong casuality (option (c)) also implies that the  Alexandrov topology coincides with the manifold topology and so that $\mathcal{A}$ separates points.

 Every point admits as neighborhood a chronological diamond, which thus belongs to $\mathcal{A}$, and which is contained in a $T$-compact set.

 By Prop.\ \ref{jept},  $T$ coincides with the Alexandrov topology and hence (by the proved strong causality) with the manifold topology.

 The chronological diamonds are thus relatively compact in the manifold topology. A spacetime is globally hyperbolic iff it is strongly causal and every chronological diamond is precompact with respect to the manifold topology \cite[Cor.\ 3.3]{minguzzi08e}, thus $(M,g)$ is globally hyperbolic. In particular $d$, and $\{d_p,d^p, p\in M\}$ are finite and continuous in the manifold topology.

 Note that the manifold topology is finer  than the initial topology of the family  $\{d_p,d^p, p\in M\}$ which is finer than the Alexandrov topology. Since the first and third (last) topology coincide, we conclude that the manifold topology coincides with the initial topology of the family  $\{d_p,d^p, p\in M\}$.
\end{proof}

By using the manifold topology Theorem \ref{cjpr} gives the following corollary, as the continuity of $d$ implies $d$-reflectivity and the continuity of the functions $\{d_p,d^p, p\in M\}$.\\

\begin{corollary} \label{cfpr}
Let  $(M,g)$ be a spacetime which is weakly $d$-distinguishing, such that $d$ is continuous and such that the chronological diamonds are relatively compact. Then  the Alexandrov topology, the manifold topology and the initial topology of the family $\{d_p,d^p, p\in M\}$ are all coincident. Moreover, $(M,g)$ is globally hyperbolic and so $d$ is finite.\\
\end{corollary}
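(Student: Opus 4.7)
The plan is to obtain Corollary \ref{cfpr} as an immediate specialization of Theorem \ref{cjpr} in which the auxiliary topology $T$ is chosen to be the manifold topology itself. Under this identification, two of the three hypotheses of Theorem \ref{cjpr} are already present in the statement: weak $d$-distinction and relative compactness of the chronological diamonds. The lower semi-continuity of $\{d_p, d^p : p \in M\}$ in $T$ is automatic from Proposition \ref{bpmf}, because the manifold topology contains the Alexandrov topology, and on the latter all such functions are already lower semi-continuous.

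It remains only to verify one of the three alternative conditions (a), (b), (c) of Theorem \ref{cjpr}. Both (a) and (b) are essentially free here. For (b), the continuity of $d$ in $T\times T$ combined with Theorem \ref{clpqo} yields $T$-continuity of each $d_p$ and $d^p$, while the inclusion $p\in \overline{I^+(p)}\cap \overline{I^-(p)}$ with respect to the manifold topology is a standard property of smooth spacetimes (a convex normal neighborhood of $p$ always contains timelike curves through $p$ in both directions). Equivalently, one can use (a): continuity of $d$ forces it to be upper semi-continuous wherever it vanishes, so Proposition \ref{nnrt} supplies future and past reflectivity, and then Corollary \ref{mopq} upgrades this to $d$-reflectivity.

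With the hypotheses of Theorem \ref{cjpr} in place, the conclusion transfers directly: the Alexandrov topology, the manifold topology, and the initial topology of $\{d_p, d^p : p\in M\}$ coincide, $(M,g)$ is globally hyperbolic, and $d$ is finite. There is no real obstacle beyond this bookkeeping; the content of the corollary is precisely that the continuity hypothesis on $d$ is a convenient sufficient replacement for the triple alternative in Theorem \ref{cjpr}, since it simultaneously delivers $d$-reflectivity and continuity of the one-point distance functions.
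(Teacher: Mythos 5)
Your proposal is correct and follows essentially the same route as the paper: the corollary is obtained by applying Theorem \ref{cjpr} with $T$ equal to the manifold topology, noting that continuity of $d$ yields both $d$-reflectivity (option (a), via Proposition \ref{nnrt} and Corollary \ref{mopq}) and the continuity of the functions $d_p,d^p$ (option (b), via Theorem \ref{clpqo}). The paper states exactly this in one line before the corollary, so your verification of the hypotheses is just the bookkeeping the authors left implicit.
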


We recall the following definition \cite[Def.\ 2.1, Lemma 2.3, Thm.\ 2.14]{minguzzi24b} \\

\begin{definition} \label{mxpq}
A {\em Lorentzian metric space without chronological boundary} is a pair $(X,d)$, where $X$ is a set, $d: X\times X\to [0,\infty)$  satisfies the reverse triangle inequality for chronologically related triples $x\ll y \ll z$ (the chronological relation being $I=\{d>0\}$), $(X,d)$ is weakly $d$-distinguishing\footnote{In \cite{minguzzi24b} this is referred, more simply, as `$d$ distinguishes the events'.}, every point is contained in some chronological diamond,  and every chronological diamond is relatively compact in some topology $T$ that makes the function $d$ continuous. \\
\end{definition}

\begin{remark}
In what follows we shall call these objects simply `Lorentzian metric spaces' as we shall only consider the case of empty chronological boundary. In other words, each event is contained in some chronological diamond.\\
\end{remark}

The causal relation for a Lorentzian metric space is defined as follows \cite[Def.\ 4.1]{minguzzi24b}
\[
\tilde J=\{(p,q): d_p \ge d_q  \ \textrm{and} \ d^p\le d^q\},
\]
it is reflexive, transitive, antisymmetric and it satisfies $I\subset \tilde J$,  and $I\circ \tilde J\cup \tilde J \circ I \subset I$, where $\circ$ is the composition of relations. The reverse triangle inequality holds for triples of points that are $\tilde J$-related \cite[Thm.\ 4.4]{minguzzi24b}.

A great deal is known about Lorentzian metric spaces and the topology $T$ mentioned in the definition. In particular, the topology $T$ is unique and coincides with the initial topology of the functions $\{d_p, d^p, p\in M\}$, and so is referred as `the topology of the Lorentzian metric space'.
The causal relation $\tilde J$ is thus closed in the product topology $T\times T$.

On the regular spacetime $(M,g)$ let us introduce the relation \cite{minguzzi07e} \cite[p.\ 105]{minguzzi18b}
\begin{align*}
D&=\{(p,q): I^+(p)\supset I^+(q)\} \cap  \{(p,q): I^-(p)\subset I^-(q)\}
\end{align*}
then, by  Prop.\ \ref{cnqxz}, it can be rewritten
\begin{align}
D&= \{(p,q): d_p \ge d_q  \ \textrm{and} \ d^p\le d^q\}.
\end{align}
We see that when $(M,g)$ is a Lorentzian metric space, $\tilde J=D$.  Reflectivity is equivalent to $D=\bar J$, see \cite[Def.\ 4.9]{minguzzi18b}, and in this case
 \[
 D= \{(p,q): d_p \ge d_q \}=\{(p,q): d^p \le d^q \}.
 \]
  If the causal relation is closed we have $D=J$, see \cite[Thm.\ 4.13]{minguzzi18b}.

We  also know that every regular globally hyperbolic spacetime $(M,g)$ is a Lorentzian metric space  \cite{minguzzi24b}. The rationale for the introduction of Lorentzian metric spaces is that they are very general. For instance, the definition comprises discrete versions of spacetime such as causal sets \cite{surya19}.

Our previous Theorem \ref{cjpr} aims to answer the following question: under what conditions a regular spacetime $(M,g)$ such that $(M,d)$ is a Lorentzian metric space is globally hyperbolic? The main difficulty is in establishing that the Lorentzian metric space topology coincides with the manifold topology. In a sense, the real problem is that of justifying and recovering the manifold topology out of ingredients that do not use it.
For that we needed to impose properties (a), (b) or (c).

Thus, Theorem \ref{cjpr}  implies the following\\

\begin{theorem} \label{mxdf}
On a spacetime $(M,g)$  let $d$ be the associated Lorentzian distance. Suppose that  $(X,d)$ is a  Lorentzian metric space in the sense of Def.\ \ref{mxpq} which satisfies (a), (b) or (c) of Thm.\ \ref{cjpr}, then $(M,g)$ is globally hyperbolic, the Lorentzian metric space topology coincides with the manifold topology and $\tilde J=J$.\\
\end{theorem}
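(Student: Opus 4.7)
The plan is to recognize that Theorem \ref{mxdf} is essentially a repackaging of Theorem \ref{cjpr}, so the work consists almost entirely of verifying that the hypotheses of Definition \ref{mxpq} feed exactly into the hypotheses of that theorem with the same topology $T$.

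First I would unpack Definition \ref{mxpq}: it supplies (i) weak $d$-distinction, (ii) a topology $T$ in which every chronological diamond is relatively compact, and (iii) the $T\times T$-continuity of the two-slot function $d$. By Theorem \ref{clpqo}, the $T\times T$-continuity of $d$ upgrades to $T$-continuity of every one-slot function $d_p$ and $d^p$, and in particular to $T$-lower semi-continuity. Consequently the ambient hypotheses of Theorem \ref{cjpr} — weak $d$-distinction, relative compactness of chronological diamonds in $T$, and lower semi-continuity of $\{d_p, d^p : p \in M\}$ in $T$ — are satisfied, with the same $T$ as the one supplied by the Lorentzian metric space structure. Since one of (a), (b) or (c) is assumed by hypothesis, Theorem \ref{cjpr} applies verbatim.

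Invoking Theorem \ref{cjpr} then delivers both conclusions simultaneously: $(M,g)$ is globally hyperbolic, and the topologies $T$, the Alexandrov topology, the manifold topology, and the initial topology of the family $\{d_p, d^p : p \in M\}$ all coincide. Since the Lorentzian metric space topology is, by the discussion immediately preceding Definition \ref{mxpq}, nothing but this initial topology (unique and intrinsic to the Lorentzian metric space), the coincidence with the manifold topology is exactly the second assertion of Theorem \ref{mxdf}.

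There is no serious obstacle here: the heavy lifting has already been carried out through Propositions \ref{bpmf} and \ref{nnrt}, Theorems \ref{clpqo} and \ref{vkqpd}, and culminating in Theorem \ref{cjpr}. The only minor point requiring attention is the identification of the topology $T$ of Definition \ref{mxpq} with the topology $T$ required by Theorem \ref{cjpr}; but this is automatic because $T\times T$-continuity of $d$ immediately implies $T$-continuity, hence $T$-lower semi-continuity, of the one-slot functions.
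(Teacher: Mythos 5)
Your proposal is correct and follows essentially the same route as the paper, which simply invokes Theorem \ref{cjpr} together with the uniqueness of the Lorentzian metric space topology (cited from the earlier work) to identify it with the initial topology of $\{d_p,d^p\}$. Your extra step of passing from $T\times T$-continuity of $d$ to $T$-continuity (hence lower semi-continuity) of the one-slot functions via the easy direction of Theorem \ref{clpqo} is exactly the verification the paper leaves implicit.
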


\begin{proof}
As recalled (\cite[Thm.\ 2.14]{minguzzi24b}) the Lorentzian metric space property implies that weak $d$-distinction holds  and every chronological diamond is relatively compact in some topology $T$ that makes the function $d$ continuous. In particular, the functions $\{d_p,d^p, \, p\in M\}$ are continuous hence lower semi-continuous. Assuming (a), (b) or (c) we can apply Thm.\ \ref{cjpr} and so infer global hyperbolicity and the equality for the topologies.  Under global hyperbolicity $J$ is closed thus $\tilde J=D=J$.
\end{proof}

This result suggests that in an abstract setting Lorentzian metric spaces augmented by properties (a), (b) or (c) could be particularly interesting. Property (b) is suited for Lorentzian length spaces in which any two chronologically related points are connected by an isochronal curve, however, it is too strong for causal sets. \\

\begin{corollary} \label{mxdf2}
On a spacetime $(M,g)$  let $d$ be the associated Lorentzian distance. Suppose that  $(X,d)$ is a  Lorentzian metric space in the sense of Def.\ \ref{mxpq} where for every $p\in M$, $p\in \overline{I^+(p)}\cap \overline{I^-(p)}$, where closure is with respect to the Lorentzian metric space topology. Then $(M,g)$ is globally hyperbolic, the Lorentzian metric space topology coincides with the manifold topology and $\tilde J=J$.\\
\end{corollary}

As mentioned, the assumption is verified for a subclass of  Lorentzian length spaces \cite[Def.\ 5.1]{minguzzi24b}, namely those for which the maximal connecting curves do not admit `null segments' in the sense that they are isochronal. In fact $p$ has some point $q\gg p$, then the isochronal curve connecting $p$ to $q$ is continuous in the Lorentzian metric space topology which implies  $p\in \overline{I^+(p)}$ and similarly in the past case.
\begin{proof}
It follows from option (b) of Thm.\ \ref{cjpr}.
\end{proof}

There  are examples of Lorentzian metric spaces for which the natural (Lorentzian metric space) topology is the discrete one, so the property $p \in \overline{I^\pm(p)}$ does not hold. In those more general cases a property such as  (a) would be  preferred.

We single out the following results which characterizes globally hyperbolic spacetimes by using solely the Lorentzian distance.\\

\begin{theorem} \label{refpo}
A regular spacetime $(M,g)$ is globally hyperbolic if and only if it is a $d$-reflective Lorentzian metric space. In this case the Lorentzian metric space topology coincides with the manifold topology and $\tilde J=J$.\\
\end{theorem}

From Cor.\ \ref{mopq} and Prop.\ \ref{nnrt} we also get\\

\begin{theorem} \label{refpo2}
Let $(M,g)$ be a regular spacetime such that its Lorentzian distance $d$ is continuous in the manifold topology. It is globally hyperbolic if and only if it is a  Lorentzian metric space. In this case the Lorentzian metric space topology coincides with the manifold topology and $\tilde J=J$.\\
\end{theorem}

We are now going to improve Theorem \ref{refpo} further.\\

\begin{theorem} \label{coox}
Let $(M,g)$ be a regular spacetime.  It is globally hyperbolic if and only if it is a future $d$-distinguishing or past $d$-distinguishing  Lorentzian metric space.  Moreover, in this case the Lorentzian metric space topology coincides with the manifold topology and $\tilde J=J$.\\
\end{theorem}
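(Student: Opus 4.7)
If $(M,g)$ is globally hyperbolic then it is strongly causal, hence distinguishing (both future and past), so by Corollary~\ref{moprs} it is both future and past $d$-distinguishing. That a globally hyperbolic smooth spacetime is a Lorentzian metric space with $\tilde J = J$ and LMS topology $T$ equal to the manifold topology is established in \cite{minguzzi24b}; in particular the required claims about $\tilde J$ and $T$ are part of the conclusion of Thm.~\ref{refpo}.

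\textbf{Converse, reduction.} By the symmetry under time reversal, assume $(M,d)$ is a future $d$-distinguishing Lorentzian metric space. The plan is to show $(M,d)$ is in fact $d$-reflective and invoke Theorem~\ref{refpo}, which simultaneously supplies global hyperbolicity and the coincidence of $\tilde J$ with $J$ and of $T$ with the manifold topology (the latter also obtainable from Thm.~\ref{cjpr}).

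\textbf{Past $d$-reflectivity via a cluster-point argument.} Assume $d_q \le d_p$, i.e.\ $I^+(q) \subset I^+(p)$, and fix $r \ll p$; I must show $r \ll q$. Since $q \in \overline{I^+(q)}$ in the manifold topology, pick $p_n \in I^+(q) \subset I^+(p)$ with $p_n \to q$ manifold-wise; then $r \ll p \ll p_n$ gives $d(r, p_n) \ge d(r, p) > 0$. Fixing $t \gg q$, eventually $p_n \in I(q, t)$, which is $T$-relatively compact, so a subsequence $p_{n_k}$ $T$-converges to some $q^*$. The $T$-continuity of $d$ gives $d(r, q^*) \ge d(r, p) > 0$, so $r \ll q^*$. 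Combining manifold lower semicontinuity of $d$ in each slot with the $T$-convergence $p_{n_k} \to q^*$ and the manifold convergence $p_{n_k} \to q$ yields $d_{q^*} \ge d_q$, hence $I^+(q) \subset I^+(q^*)$. If $q^* = q$ we are done. Otherwise future $d$-distinction forces $I^+(q) \subsetneq I^+(q^*)$, so one may pick $x \in I^+(q^*) \setminus I^+(q)$; $T$-continuity then yields $d(p_{n_k}, x) \to d(q^*, x) > 0$, so $p_{n_k} \ll x$ eventually, and the chain $q \ll p_{n_k} \ll x$ produces $q \ll x$, contradicting $x \notin I^+(q)$. Thus $q^* = q$ and $r \ll q$.

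\textbf{Future $d$-reflectivity and the main obstacle.} The dual statement $d^p \le d^q \Rightarrow d_q \le d_p$ does not admit a symmetric proof: given $I^-(p) \subset I^-(q)$ and $y \gg q$, the transitivity chain used above would require an approximant $z$ with $p \ll z \ll y$, i.e.\ an element of $I(p,y)$, which is empty precisely when $p \not\ll y$ (the hypothesis one is trying to refute). The natural substitute is to extract from a sequence $p_n \in I^-(p) \subset I^-(q)$ with $p_n \to p$ manifold-wise a $T$-cluster point $p^*$, which by $d(p_n, y) \ge d(q, y) > 0$ satisfies $p^* \ll y$, and by manifold lower semicontinuity together with the $T$-convergence satisfies $d_{p^*} \ge d_p$ and $d^{p^*} \ge d^p$. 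The case $p^* = p$ gives $p \ll y$, as required; the principal obstacle is to exclude $p^* \ne p$. Applying the already-established past $d$-reflectivity to the pair $(p^*, p)$ collapses $d^{p^*} \ge d^p$ into $I^-(p^*) = I^-(p)$, and the remaining task is to derive a contradiction from this apparent failure of past distinction using future $d$-distinction together with the $T$-compactness of chronological diamonds: the idea is to iterate the cluster-point construction, producing a monotone family of points sharing the past $I^-(p)$ but having strictly increasing futures, and then to show that $T$-compactness together with future $d$-distinction forces this family to collapse to a single point. Once both past and future $d$-reflectivity are established, Theorem~\ref{refpo} delivers global hyperbolicity and Theorem~\ref{cjpr} completes the identifications of $\tilde J$ and $T$.
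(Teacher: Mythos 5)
Your argument for past $d$-reflectivity (under the WLOG assumption of future $d$-distinction) is correct and in fact cleaner than the corresponding branch of the paper's proof: the chain $q\ll p_{n_k}\ll x$ closes because the approximants lie to the \emph{future} of $q$, and together with Prop.~\ref{cnqxz} and future $d$-distinction this forces the cluster point to coincide with $q$ without any appeal to the limit curve theorem. (Two small caveats: the extraction of a $T$-convergent \emph{subsequence} from a $T$-relatively compact set needs the metrizability of $T$, which the paper obtains by showing the space is countably generated and hence Polish — your later switch to cluster points sidesteps this; and the claims about $\tilde J$ and $T$ in the forward direction are not literally part of the statement of Thm.~\ref{refpo}, though they do follow from the cited results.)

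The genuine gap is exactly where you locate it: future $d$-reflectivity. Under your reduction this is the hard case, and what you offer for it is a program, not a proof. Having excluded $p^*=p$ only up to the observation that $p^*\ne p$ would violate \emph{past} distinction ($I^-(p^*)=I^-(p)$), you still must manufacture a contradiction with \emph{future} $d$-distinction, and the proposed mechanism — ``iterate the cluster-point construction, producing a monotone family \dots and show that $T$-compactness together with future $d$-distinction forces this family to collapse'' — is not substantiated: no compact set containing the iterates is exhibited, no ordinal/termination argument is given, and it is not explained why collapse of the family would contradict anything. This is precisely the point where the paper deploys heavy machinery: it applies the limit curve theorem for smooth spacetimes to the timelike segments $\sigma_k$ joining the two candidate points, rules out the connecting-curve alternative by antisymmetry of $D$, and then shows that the resulting future-inextendible limit curve $\sigma_x$ consists of points $r$ with $I^-(r)=I^-(x)$ — using that $\sigma_k$ is eventually trapped in the $T$-open set $d_w^{-1}((-\infty,c))$ while $C^0$-converging to $\sigma_x$ in the manifold topology — thereby violating future \emph{and} past distinction simultaneously and contradicting the ``future or past'' hypothesis. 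Nothing in your sketch reproduces this interplay between the two topologies, so the converse direction of Theorem~\ref{coox} is not established as written.
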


It is equivalent to ask that $(M,g)$ is a `future or past distinguishing' spacetime and $(M,d)$ is a Lorentzian metric space.

This is an improvement of Thm.\ \ref{refpo} because $d$-reflectivity improves the weak $d$-distinction property implicit in the definition of Lorentzian metric space to $d$-distinction.

\begin{proof}
The direction to the right was proved in \cite[Prop.\ 2.4, Thm.\ 2.14]{minguzzi24b}

For the converse, we want to prove that $d$-reflectivity holds and then apply Thm.\ \ref{cjpr}.

Since the manifold can be covered by a countable family of charts, by selecting the points corresponding to rational coordinates, we get a countable family of points $\mathcal{S}$ such that for every $p\in M$, there are $x,y\in \mathcal{S}$ such that $p\in I(x,y)$. As a consequence, $(M,d)$ is a countably generated Lorentzian metric space \cite[Def.\ 2.12, Def.\ 3.17]{minguzzi24b} which implies that the Lorentzian metric space topology is Polish \cite[Prop.\ 3.20]{minguzzi24b}. In particular, being metrizable, compactness for this topology is equivalent to sequential compactness.

Recalling the definition of causal relation for a Lorentzian metric space, namely $\tilde J=\{(p,q): d_p \ge d_q  \ \textrm{and} \ d^p\le d^q\}$, we get $\tilde J=D$. Observe that $I=\{d>0\}$ thus $I$ is also the chronological relation for $(M,d)$.

Let us prove past $d$-reflectivity. Let $p,q\in M$ and suppose that $d_p\ge d_q$. Let $\gamma:[0,1]\to M$, $\gamma(0)=q$, be a smooth timelike curve. Then for each $t>0$, $d_q(\gamma(t))>0$ which implies $d_p(\gamma(t))>0$ and hence $p \ll \gamma(t)$. The chronological diamond $I(p,\gamma(1))$ is relatively compact in the Lorentzian metric space topology. The sequence $q_n:=\gamma(1/n)$ must have a subsequence $x_k=q_{n_k}$ which converges to  some point $x$ in the Lorentzian metric space topology (note that we do not assume the continuity of the curve in the Lorentzian metric space topology), see Fig.\ \ref{figpr}.

Note that $(p,x_k)\in I \subset \tilde J$ thus $(p,x)\in \tilde J=D$.  If $x=q$  we have $(p,q) \in \tilde J$ and, by the definition of $\tilde J$, we have
$d^p\le d^q$, and we have finished.


There remains the case in which $x\ne q$.
We want to show that this possibility does not apply as it leads to contradictions.

Since for each $t>0$ the sequence $x_k$ is contained (for sufficiently large $k$)  in $\tilde J^{-}(\gamma(t))$, which is closed in Lorentzian metric space topology, the point $x$ belongs to $\cap_{t>0} \tilde J^{-}(\gamma(t))$. However, by $I\circ \tilde J\cup \tilde J \circ I \subset I$, for $t'>t$, $\tilde J^{-}(\gamma(t)) \subset I^-(\gamma(t'))$. Now, for every $t'>0$ we can find $t$, $0<t<t'$, thus $x\in \cap_{t'>0}  I^-(\gamma(t'))$ and relabelling
$x\in \cap_{t>0}  I^-(\gamma(t))$. This implies $q\in \overline{I^+(x)}$ (closure in the manifold topology) or equivalently $I^+(q)\subset I^+(x)$. However, the sequence $x_k$ is also contained in $\tilde J^{+}(q)$ which is  closed in Lorentzian metric space topology thus $x$ belongs to it and hence $(q,x)\in D$ by the proved equality $D=\tilde J$.
Since  $q\in \overline{I^+(x)}$  and  $x\in \overline{I^+(q)}$, we have by the openness of $I$, $I^+(x)=I^+(q)$ (see e.g.\ \cite[Eq.\ (4.1)]{minguzzi18b}).



We recall that the weak $d$-distinction property inherent in the Lorentzian metric space definition implies the weak distinction property.


Consider the sequence $x_{k}\gg x_{k+1}$, $x_k\in I^+(q)=I^+(x)$, $x_k\to x$ in the Lorentzian metric space topology, $x_k\to q$ in the manifold topology. Let $\sigma_k: [0,1]\to M$ be a timelike curve connecting $x$ to $x_k$. They do not contract to a point because in the manifold topology $x_k\to q \ne x$.  But $\sigma_{k}((0,1))\subset I(x,x_k)$, thus $\sigma_{k}\subset D(x,x_k)=\tilde J(x,x_k)$ which are compact in the Lorentzian metric space topology (and satisfy the finite intersection property). Note that $\cap_k \tilde J(x,x_k)=\{x\}$ because if $y\in \tilde J^-(x_k)\cap \tilde J^+(x)$ for every $k$, then, $(y,x_k)\in \tilde J$ implies  $(y,x) \in \tilde J$ by the closure of $\tilde J$ in the Lorentzian metric space  topology, and by its antisymmetry $(y,x)\in \tilde J$ and $(x,y)\in \tilde J$ implies $y=x$.

 For every neighborhood (in the  Lorentzian metric space topology) $O\ni x$ there is a sufficiently large $k$ such that $\sigma_k \subset D(x,x_k) = \tilde J(x,x_k)\subset O$. However, the limit curve theorem (version for regular spacetimes), applied to the sequence $\sigma_k$, implies that there is either: (i) a causal curve connecting $x$ to $q$, but this implies  $(x,q)\in J\subset D$, which implies (we proved above $(q,x)\in D$) that $D$ is not antisymmetric, a violation of weak distinction, a contradiction; or (ii) there are a past-inextendible causal curve $\sigma^q: (-1,0]\to M$ of ending point $q$ and a future inextendible causal curve $\sigma_x:[0,1)\to M$ of starting point $x$, both suitable limits of $\sigma_k$.



\begin{figure}[ht]
\centering
\includegraphics[width=8cm]{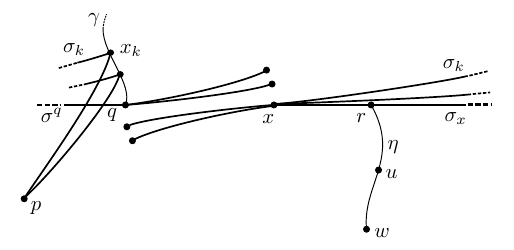}
\caption{A figure illustrating the proof of Thm.\ \ref{coox} under the assumption $q\ne x$. We have $(q,x)\in D$, $I^+(q)=I^+(x)$, $I^-(x)=I^-(r)$, so past and future distinction are violated. The figure suggests $I^+(r)\cap \sigma_k\ne \emptyset$ for any sufficiently large $k$, but this is just due to its bidimensionality.} \label{figpr}
\end{figure}

Now, consider  the curve $\sigma_x$ and let $r=\sigma_x(s)$, for some $s\in (0,1)$. Since $x\le r$ we know that $I^+(x)\supset I^+(r)$ and $I^-(x)\subset I^-(r)$. If $I^-(x)\ne I^-(r)$ there is $w\ll r$ such that $x \notin I^+(w)$. Let us consider a timelike curve $\eta:[0,1]\to M$ connecting $w$ to $r$, then we can find $u=\eta(1/2)$ so that $d(w,u)=c>0$, $d(u,r)>0$. Then, as $d(w,x)=0$, $x\in d_w^{-1}((-\infty,c))$ and $r \in I^+(u)$ where $d_w^{-1}((-\infty,c))\cap I^+(u)=\emptyset$. But $O:=d_w^{-1}((-\infty,c))$ is a Lorentzian metric space topology open neighborhood of $x$ thus, as we have shown above, $\sigma_k$ is contained in it for sufficiently large $k$ and so it cannot intersect $I^+(u)$. But as $r\in I^+(u)$ and $r\in \sigma_x$, this contradicts convergence in the $C^0$ topology (induced on the space of curves by the manifold topology \cite{hawking73}) of $\sigma_k$ to $\sigma_x$ which holds (actually a  stronger form of uniform convergence for suitably parametrized curves holds) for the standard limit curve theorem for regular spacetimes \cite{minguzzi18b}. The contradiction proves that $I^-(x)= I^-(r)$ (and by the arbitrariness of $s$, $I^-(x)= I^-(\sigma_x(s))$ for every $s\in (0,1)$). Now we have $I^+(q)=I^+(x)$, $q\ne x$, $I^-(x)=I^-(r)$, $x \ne r$, thus both future and past distinction are violated, which  contradicts the assumption.

Summarizing, the assumption $x=q$ leads to the proof of $d$-reflectivity, while the assumption $x\ne q$ leads to a contradiction and so  does not apply. Thus past $d$-reflectivity is proved.

Future $d$-reflectivity is proved similarly (using again the validity of   `past or future distinction'). We conclude, by Thm.\ \ref{mxdf}, that the spacetime is globally hyperbolic and the Lorentzian metric space topology coincides with the manifold topology. For a globally hyperbolic spacetime $J$ is closed, thus $D=J$ which implies $\tilde J=J$.
\end{proof}

\begin{remark}
It is natural to try to improve the theorem by dropping the assumption of future or past $d$-distinction. The assumption would then be `weak $d$-distinction' as it is contained in the definition of Lorentzian metric space. The proof would proceed as above reaching the case $q\ne x$ where we would need to push further the description of the spacetime in search of a contradiction.

In this respect it is worth nothing that the Lorentzian metric space topology looks rather peculiar. Indeed, since $I^+(q)=I^+(x)$, $I^-(q)\subset I^-(x)$, the two points are distinguished by a point $v\in I^-(x)$, $q\notin I^+(v)$. But then, chosen $b,c\in M$ such that $v\ll b\ll c\ll x$, since $I^+(v)\supset \tilde J^+(b)$, $q\in I^+(v)^C\subset \tilde J^+(b)^C=D^+(b)^C$ and $x\in I^+(c)$. Now, $I^+(c)\cap (\tilde J^+(b))^C=\emptyset$ where the two sets are open sets in the Lorentzian metric space topology (and the former also in the manifold topology). But any manifold topology open neighborhood of $q$ intersects $I^+(q)=I^+(x)\subset I^+(c)$ the Lorentzian metric space topology open neighborhood of $x$.

Another interesting observation is that for $t'>t$, $I^+(\sigma_x(t'))\subsetneq   I^+(\sigma_x(t))$, indeed if they were to coincide we would have a violation of weak distinction and we would have finished. Note that $(x, x_1)\in I$, thus for any sufficiently small $\epsilon>0$, $(\sigma_x(\epsilon), x_1)\in I$.
For given $\epsilon$ we must have for sufficiently large $k$, $(\sigma_x(\epsilon), x_k)\notin D$, otherwise $(\sigma_x(\epsilon), x) \in D$, which jointly with $(x,\sigma_x(\epsilon))\in J\subset D$ would again give a violation of weak distinction. This means that there is some $0<\tau=\inf \{s\}$ where $s$ are parameters such that $(\sigma_x(\epsilon), \gamma(s))\in I$.
In principle there could be an achronal lightlike geodesic connecting $\sigma_x(\epsilon)$ to $\gamma(\tau)$. We hope that this type of description could lead to a counterexample to the possibility of improving the theorem or to an improved assumption.
\end{remark}

\section{Globally hyperbolic Lorentz-Finsler spaces} \label{cnqp4}

In \cite{minguzzi17} we developed the causality theory for manifolds endowed with upper semi-continuous cone structures, see  \cite{bernard18,bernard19} for results on time functions. We already mentioned that for smooth spacetimes $(M,g)$, global hyperbolicity implies that $(M,d)$ is a Lorentzian metric space.  In this section we seek to obtain a similar result for these more  general structures.

In the following theorem and subsequent proof, the terminology of \cite{minguzzi17} is used. Unfortunately, it would be rather long to recall all the definitions here, so we shall assume the reader to be acquainted with that work.

We recall that a {\em Lorentzian length space} is a Lorentzian metric space for which any two causally related events are connected by a maximizing isocausal curve \cite[Def.\ 5.1]{minguzzi24b}.\\

\begin{theorem} \label{ckkq}
    Let $(M,C,\mathscr{F})$ be a globally hyperbolic locally Lipschitz proper Lorentz-Finsler space, where $C$ is the cone distribution and $\mathscr{F}$ the Finsler fundamental function. Assume that $\mathscr{F}(\partial C)=0$. Let $d$ be the associated Lorentz-Finsler distance function. Then $(M,d)$ is a Lorentzian length space, the Lorentzian metric space topology coincides with the manifold topology, and the chronological and causal relations for the Lorentzian metric space coincide with those of the Lorentz-Finsler space.\\
\end{theorem}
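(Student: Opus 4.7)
The plan is to verify the four defining properties of a Lorentzian metric space in Def.~\ref{mxpq} for the pair $(M,d)$, taking the candidate topology $T$ to be the manifold topology, and then invoke Thm.~\ref{cjpr} (option (c)) to conclude that $T$ coincides with the Lorentzian metric space topology. First, I would recall from \cite{minguzzi17} that a globally hyperbolic proper Lorentz-Finsler space is strongly causal and has compact causal diamonds $J^+(p)\cap J^-(q)$. Strong causality implies that through each point passes a timelike curve, so each point lies in some chronological diamond $I(p,q)$, and the inclusion $I(p,q)\subset J^+(p)\cap J^-(q)$ gives relative compactness of chronological diamonds in the manifold topology.

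Next, the reverse triangle inequality for chronologically related triples $p\ll q\ll r$ is built into the definition of $d$ as the supremum of the Lorentz-Finsler length over causal curves, and holds irrespective of regularity (indeed, the concatenation of two length-maximizing causal curves is admissible by $\mathscr{F}(\partial C)=0$). Finiteness of $d$ is a consequence of the Avez--Seifert type theorem in \cite{minguzzi17} for globally hyperbolic proper Lorentz-Finsler spaces: the lengths of causal curves joining two points are bounded above by the supremum of $\mathscr{F}$ over the compact lift of the causal diamond, which is finite by proper upper semi-continuity.

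For continuity of $d$ with respect to the manifold topology I would combine two ingredients available in the Lorentz-Finsler low regularity setting: lower semi-continuity of $d$, which holds in complete generality by the argument of Prop.~\ref{bpmf} once the Alexandrov topology is coarser than the manifold topology; and upper semi-continuity, which I would deduce from the limit curve theorem of \cite{minguzzi17} together with upper semi-continuity of Lorentz-Finsler length under $C^0$ convergence of causal curves with endpoints ranging in a compact causal diamond. Weak $d$-distinction then follows from strong causality (which gives the classical distinguishing property $I^\pm(p)=I^\pm(q)\Rightarrow p=q$) combined with the Lorentz-Finsler analog of Prop.~\ref{cnqxz}: its proof uses only the openness of $I$ and the supremum-over-timelike-curves definition of $d$, both available in the locally Lipschitz proper setting, so it carries over verbatim.

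At this point all the hypotheses of Thm.~\ref{cjpr}(c) are in place: $(M,d)$ is weakly $d$-distinguishing, its chronological diamonds are relatively compact in the manifold topology (which makes $\{d_p,d^p\}$ lower semi-continuous by Prop.~\ref{bpmf}), and the spacetime is strongly causal; the theorem then delivers the coincidence of $T$ with the manifold topology and the Alexandrov topology, completing the proof. The main obstacle I anticipate is the continuity of $d$ in the manifold topology under merely locally Lipschitz regularity: the standard smooth argument must be rerouted through the low-regularity limit curve theorem of \cite{minguzzi17} and the semi-continuity properties of the Finslerian length functional, both of which are the nontrivial technical inputs from that reference.
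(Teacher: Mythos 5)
Your proposal follows essentially the same route as the paper: both verify the axioms of Definition~\ref{mxpq} directly with $T$ taken to be the manifold topology, obtaining relative compactness of chronological diamonds from the inclusion $I(p,q)\subset J(p,q)$ into the compact causal diamonds, continuity of $d$ from the global hyperbolicity results of \cite{minguzzi17}, and weak $d$-distinction from the distinguishing property together with $I^{\pm}(p)$ being superlevel sets of $d_p,d^p$. The only cosmetic differences are that the paper cites (d) and (g) of \cite[Thm.~2.60]{minguzzi17} for upper semi-continuity where you sketch its limit-curve proof, and that it concludes the coincidence of topologies from the uniqueness of the Lorentzian metric space topology rather than by invoking Theorem~\ref{cjpr}(c).
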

\begin{proof}
  To begin with, let us note that by \cite[Prop. 2.23]{minguzzi17}, since  $(M,C,\mathscr{F})$  is a  proper Lorentz-Finsler space, $(M,C)$ is a proper cone structure.

Originally, the chronological relation is  defined via piecewise $C^1$ timelike curves and not via the equality $I=\{d>0\}$. However, the inclusion $\supset$  holds thanks to  \cite[Thm.\ 2.56]{minguzzi17} (here we have used the fact that the fundamental function vanishes on the boundary), while the inclusion $\subset$ holds because on a proper Lorentz-Finsler space $\mathscr{F}$ is positive on $\textrm{Int}(C_p)$ and hence on $(\textrm{Int} C)_p$ (this follows from the concavity properties of $\mathscr{F}$, its non-negativity, and the fact that $C^\times_P:=\{(y,z): y\in C_p, \ \vert z\vert \le \mathscr{F}(y)\}$,  must be a proper cone i.e.\
 have non-empty interior).


    We have to verify the properties of the Lorentzian metric space of Def.\ \ref{mxpq},
    that is, (i) the reverse triangle inequality, (ii) in the manifold topology $d$  is continuous and the chronological diamonds are relatively compact (this would also show that the Lorentzian metric space topology is the manifold topology), (iii) weak $d$-distinction. The fact, (iv), that every event is contained in a chronological diamond is clear in the context of cone structures.

    The function $d$ satisfies the reverse triangle inequality \cite[Eq.\ (2.7)]{minguzzi17} and is lower semi-continuous  \cite[Thm.\ 2.53]{minguzzi17} in the manifold topology of $M$ {(here we have used again  the fact that the fundamental function vanishes on the boundary)}. By (g) and (d) of \cite[Thm.\ 2.60] {minguzzi17} it is also upper semi-continuous, thus continuous.

    For each $p,q\in M$, $I(p,q)\subset J(p,q)$ (here $J$ denotes the causal relation for $(M,C)$). By global hyperbolicity of $(M,C)$, $J(p,q)$ is compact. Taking into account Hausdorffness of the manifold topology,
    $\overline{I(p,q)}\subset \overline{J(p,q)}$ is also compact.

    We need only to prove property (iii).   By \cite[Thm.\ 2.47]{minguzzi17} (recall that by definition of Lorentz-Finsler spaces, $(M,C)$ is a closed cone structure, so the cited theorem applies), globally hyperbolic cone structure is distinguishing in the sense of \cite[Def.\ 2.16]{minguzzi17},
    so, by \cite[Prop.\ 2.18]{minguzzi17}\footnote{Note we can use here both options (a) and (b), since global hyperbolicity implies reflectivity, while proper and local Lipschitz properties are assumed in the statement.} $D_f=\{(p,q): q\in \overline{J^+(p)}\}$ is antisymmetric. But for $p,q\in M$, $p\ne q$, we have $q\in \overline{J^+(q)}$ and $p\in \overline{J^+(p)}$, so we cannot have   $\overline{J^{+}(p)}\neq \overline{J^{+}(q)}$ otherwise $q\in \overline{J^{+}(p)}$ and $p\in \overline{J^{+}(q)}$ which contradicts antisymmetry of $D_f$.
    As a consequence, due to \cite[Thm.\ 2.7]{minguzzi17} it cannot be ${I^{+}(p)}= {I^{+}(q)}$, which implies that the distance $d$ distinguishes the two events. This proves that $(M,d)$ is a Lorentzian metric space and its topology is the manifold topology.

    The equivalence of the chronological relations has been already proved. By \cite[Thm.\ 2.47]{minguzzi17} the causal relation $J$ of the cone structure is closed and by \cite[Thm.\ 2.7]{minguzzi17} $J= \bar I$. The causal relation for the Lorentzian metric space is $\tilde J= \{(p,q): d_p \ge d_q  \ \textrm{and} \ d^p\le d^q\} \subset  \{(p,q): I^+(q)\subset I^+(p) \ \textrm{and} \ I^-(p) \subset I^-(q)\} \subset \bar J=J$. But $\tilde J$ contains $I$ and is closed in the Lorentzian metric space topology and hence in the manifold topology $J=\bar I\subset \bar{\tilde J}=\tilde J$, thus $J=\tilde J$.

    Finally, the Lorentzian metric space is actually a Lorentzian length space thanks to the validity of the Avez-Seifert theorem \cite[Thm.\ 2.55]{minguzzi17}. Note that the maximizing curve $\sigma: [0,1]\to M$ for the Lorentz-Finsler space is absolutely continuous hence continuous, and since manifold topology and Lorentzian metric space topology coincide, they are also continuous in the latter topology. The identity $\ell(\sigma)=d(\sigma(0),\sigma(1))$, where $\ell$ is the Lorentzian length functional, implies, via the reverse triangle inequality, that for every triple of intermediate points $0\le t< t'< t''\le 1$, \[
    d(\sigma(t), \sigma(t'))+d(\sigma(t'),\sigma(t''))=d(\sigma(t),\sigma(t'')) ,
    \]
    which is the maximization property in the sense of Lorentzian length spaces.
    \end{proof}

    The maximizing causal curve connecting two causally related events need not be of definite causal character. For that a slightly more restrictive condition on  $\mathscr{F}$ must be imposed \cite[Thm.\ 3.1, 4.2]{minguzzi19b} (see \cite{graf18} for the Lorentzian case).
%
%

\section{Distance preservation implies Lorentzian isometry} \label{cmp3q}

We believe the result of this section is a kind of folklore results which, nevertheless, requires a detailed argument as a proof would also answer  recent questions in \cite{sakovich24}. It shows that under low regularity it is natural to focus on bijections that preserve the Lorentzian distance, as done e.g.\ in \cite{minguzzi22}, as under sufficient regularity they are equivalent to Lorentzian isometries.\\

\begin{lemma} \label{iqq}
On a distinguishing smooth spacetime $(M,g)$ for any future-directed timelike curve $\gamma : I\to M$ passing through $p\in M$, $p=\gamma(0)$, with tangent $v\in T_pM$,
\[
g_p(v,v)=-\frac{\dd^2}{\dd t^2} \frac{1}{2} d^2_p(\gamma(t))\vert_{t=0}
\]
\end{lemma}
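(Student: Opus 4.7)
The plan is to prove the one-sided asymptotic expansion
\[
d_p(\gamma(t))^2 \;=\; -\,g_p(v,v)\,t^2 + o(t^2) \qquad (t\to 0^+),
\]
equivalently $-g_p(v,v)=\lim_{t\to 0^+} d_p(\gamma(t))^2/t^2$, from which the stated identity follows by twice differentiating at $t=0^+$. For $t<0$ small one has $\gamma(t)\in I^-(p)$ and, since distinguishing implies chronology at $p$ (i.e.\ $p\notin I^+(p)$), also $d_p(\gamma(t))=0$; the second derivative in the lemma is therefore naturally read as the right one-sided derivative at $t=0$.

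I would first fix a convex normal neighborhood $U$ of $p$ and set $w(t):=\exp_p^{-1}(\gamma(t))$, so that $w(0)=0$, $w'(0)=v$ and $w(t)=tv+O(t^2)$. For $t\in(0,\delta)$, $\gamma(t)$ lies in the local chronological future $I^+_U(p)$ of $p$ inside $U$, and the standard local maximality of radial timelike geodesics in a convex normal neighborhood gives
\[
d^U(p,\gamma(t)) \;=\; \sqrt{-g_p(w(t),w(t))} \;=\; \sqrt{-g_p(v,v)}\,t+O(t^2),
\]
where $d^U$ denotes the Lorentzian distance obtained by maximizing over causal curves constrained to $U$. Since every $d^U$-admissible causal curve is also $d_p$-admissible, the lower bound $d_p(\gamma(t))\ge d^U(p,\gamma(t))$ is automatic.

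The main obstacle is the matching upper bound $d_p(\gamma(t))\le \sqrt{-g_p(v,v)}\,t+o(t)$, which is where the distinguishing hypothesis must be invoked. Without strong causality one cannot a priori confine maximizing causal curves from $p$ to a small neighborhood of $p$. Arguing by contradiction, suppose $\varepsilon>0$, $t_n\to 0^+$ and near-maximizing future-directed causal curves $\sigma_n$ from $p$ to $\gamma(t_n)$ exist with $\ell(\sigma_n)>(\sqrt{-g_p(v,v)}+\varepsilon)\,t_n$. Since $\sigma_n$ confined to any fixed $W\Subset U$ would satisfy $\ell\le d^U(p,\gamma(t_n))$, each $\sigma_n$ must exit $W$, so after $h$-arclength reparametrization with an auxiliary complete Riemannian metric $h$ its $h$-length is bounded below by $2\,d_h(p,\partial W)>0$. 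The limit curve theorem for smooth spacetimes then extracts, up to subsequence, a future-directed causal curve of nontrivial $h$-length starting at $p$ and, dually (from the time-reversed curves based at $\gamma(t_n)\to p$), another ending at $p$; a push-up argument along these limit curves locates an event $r\ne p$ with $I^+(r)=I^+(p)$ or $I^-(r)=I^-(p)$, contradicting distinguishing.

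Combining the two bounds yields the expansion and hence the formula. The local normal-neighborhood computation is routine; the delicate point is transferring it to the global distance $d$, which is achieved through the limit-curve contradiction with distinguishing outlined above.
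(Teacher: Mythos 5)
Your strategy is viable and ends up close in spirit to the paper's, but two steps need repair. The paper obtains the localization for free by invoking the characterization of distinction through \emph{distinguishing neighborhoods} (arbitrarily small $U\ni p$ such that every causal curve with one endpoint $p$ and the other in $U$ stays in $U$, cf.\ Thm.~4.44 of \cite{minguzzi18b}); this yields the exact identity $d_p\vert_U=d_p^U$, hence smoothness of $d_p^2$ on $I^+(p,U)$, after which one only notes that the first derivative vanishes at $p$ and that the second derivative depends on $\dot\gamma(0)$ alone, so it may be evaluated on the radial geodesic. You instead re-derive the localization via the limit curve theorem. The mechanism is right, but the sentence ``a push-up argument \dots locates an event $r\ne p$ with $I^+(r)=I^+(p)$ or $I^-(r)=I^-(p)$'' hides the hard part: from $p\le x_n\le \gamma(t_n)$ with $x_n\to r$ and $\gamma(t_n)\to p$, push-up only gives $x_n\in I^-(w)$ for each fixed $w\gg p$ and large $n$, hence $r\in\overline{I^-(w)}$, i.e.\ $I^-(r)\subset I^-(w)$ --- not $r\ll w$, which is what $I^+(p)\subset I^+(r)$ requires. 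The closure relation can be upgraded to membership when the limit curve contains two chronologically related points (if $\sigma(s)\ll\sigma(s')$ and $\sigma(s')\in\overline{I^-(w)}$ then $\sigma(s)\in I^-(w)$), but the case of an achronal null limit geodesic needs a separate treatment; this case analysis is essentially the proof of the theorem the paper cites, so you should either cite that result or actually supply the argument.

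Second, the step ``from which the stated identity follows by twice differentiating at $t=0^+$'' is not valid as written: a Peano-type expansion $d_p^2(\gamma(t))=-g_p(v,v)\,t^2+o(t^2)$ does not imply that $\frac{\dd}{\dd t}\,d_p^2(\gamma(t))$ exists for small $t>0$, let alone that the classical one-sided second derivative at $0$ exists; it only identifies the second-order Peano derivative. The cure is to run your contradiction argument against the statement ``for some $t_n\to 0^+$ a causal curve from $p$ to $\gamma(t_n)$ leaves $U$'' rather than against the $\varepsilon$-margin inequality: the same limit-curve contradiction then gives the exact identity $d_p(\gamma(t))=d^U(p,\gamma(t))=\sqrt{-g_p(w(t),w(t))}$ for all small $t>0$, after which $d_p^2\circ\gamma$ is smooth on $[0,\delta)$ and the one-sided derivatives exist classically. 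On the positive side, your observation that the second derivative must be read one-sidedly (since $d_p^2\circ\gamma\equiv 0$ for $t<0$ in a chronological spacetime) is correct, and it is a point the paper's own proof passes over silently.
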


Note that we are not claiming $g_p=\frac{1}{2} \textrm{Hess} \,d_p^2$  as this is not entirely true in Lorentzian geometry because $d_p$ vanishes and has zero Hessian outside the cone.

\begin{proof}
The point $p$ admits an arbitrarily small distinguishing neighborhood $U$, that is such that every causal curve with one endpoint in $p$ and the other in $U$ is necessarily contained in $U$. As a consequence, $d_p\vert_U$ coincides with the $d_p^U$, the Lorentzian distance from $p$ of the spacetime $(U, g\vert_U)$. In particular, $U$ can be chosen inside a convex neighborhood. Let $\varphi:U\to \mathbb{R}$ be the smooth function composition of the  smooth function $\exp^{-1}_p\vert_U: U\to T_p M$ with the smooth function $v\mapsto -g_p(v,v)$. Observe that $d_p^2 \vert_{I^+(p,U)}=\varphi\vert_{I^+(p,U)}$ which is smooth.


For every smooth curve $\gamma$ passing though $p=\gamma(0)$ with timelike tangent $v$, we have $\frac{\dd}{\dd t} \varphi(\gamma(t))\vert_{t=0}=0$ as $\varphi(p)=0$ which is the minimum, and so $\frac{\dd}{\dd t} d^2_p(\gamma(t))\vert_{t=0}=0$ and also, by the arbitrariness of $v$, $\dd \varphi=0$ at $p$.
Now observe that $\frac{\dd^2}{\dd t^2} \varphi\vert_{t=0}$ depends only on $\dot \gamma=v$ and not on $\ddot \gamma(0)$, thus any curve passing from $p$ with first derivative $v$ gives the same result, in particular a geodesic. For $v$ timelike it holds $d^2_p(\gamma(t))=-g_p(v,v)t^2$, thus for any curve with tangent $v$, $\frac{\dd^2}{\dd t^2} \frac{1}{2} d^2_p(\gamma(t))\vert_{t=0}=-g_p(v,v)$.
\end{proof}

\begin{theorem}
Let $(M,g)$  and $(M',g')$ be  strongly causal smooth spacetimes and let $f: M \to M'$ be a distance preserving bijection:
\[
d'(f(p),f(q))=d(p,q).
\]
 Then $M$ and $M'$  have the same dimension, $f$ is a (smooth) diffeomorphism such that $f_*$ sends at every point the future timelike cone into the future timelike cone, and $g=f^* g'$.\\
\end{theorem}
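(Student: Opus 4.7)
My plan is to prove the theorem in three stages. First, since $d(p,q)>0 \Leftrightarrow p \ll q$ (and likewise for $d'$), both $f$ and $f^{-1}$ preserve the chronological relation. Strong causality makes the Alexandrov topology coincide with the manifold topology on each side, and $f$ sends chronological diamonds bijectively to chronological diamonds, so $f$ is a homeomorphism; invariance of domain then forces $\dim M = \dim M' =: n$.

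The main obstacle is to upgrade $f$ to a smooth diffeomorphism, which I plan to do by building local coordinates out of Lorentzian distance functions. Fix $p \in M$, choose linearly independent timelike $v_1,\dots,v_n \in T_pM$, and for small $s_i > 0$ set $q_i = \exp_p(-s_i v_i) \in I^-(p)$. In a convex normal neighborhood $U$ of $p$ contained in $\bigcap_i I^+(q_i)$, each $d_{q_i}^2$ is smooth, and the Lorentzian Gauss lemma applied to $d_{q_i}^2(x) = -g_{q_i}(\exp_{q_i}^{-1}(x),\exp_{q_i}^{-1}(x))$ yields $d(d_{q_i}^2)_p = -2 s_i\, g_p(v_i,\cdot)$. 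Linear independence of the $v_i$ and non-degeneracy of $g_p$ then make the map $\Phi(x) := (d_{q_1}^2(x),\dots,d_{q_n}^2(x))$ a smooth local diffeomorphism at $p$. The same Gauss-lemma argument carried out in $M'$ shows that $\Phi'(z) := ((d'_{y_1})^2(z),\dots,(d'_{y_n})^2(z))$ is a local diffeomorphism at $f(p)$ whenever the tuple $(y_1,\dots,y_n)$ lies in an open dense subset $\mathcal{V}'$ of $n$-tuples in $I^-(f(p))$ near $f(p)$. Its preimage under the homeomorphism $f^{\times n}$ is open dense in the corresponding space of $q$-tuples, so it intersects the open dense set of $q$-tuples on which $\Phi$ is a local diffeomorphism; picking $q_i$ in the intersection and $y_i := f(q_i)$, distance preservation gives $\Phi = \Phi' \circ f$ and hence $f = (\Phi')^{-1} \circ \Phi$ is $C^\infty$ near $p$. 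The same argument applied to $f^{-1}$ upgrades $f$ to a diffeomorphism.

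With smoothness in hand, fix timelike $v \in T_pM$ and a smooth future-directed timelike curve $\gamma$ through $p$ with $\dot\gamma(0)=v$. Lemma~\ref{iqq} and distance preservation yield
\[
 g_p(v,v) = -\tfrac{1}{2}\tfrac{d^2}{dt^2}\,(d'_{f(p)})^2(f(\gamma(t)))\Big|_{t=0}.
\]
On a convex normal neighborhood of $f(p)$ in $M'$ one has $(d'_{f(p)})^2 = -g'_{f(p)}(\exp_{f(p)}^{-1}\cdot,\exp_{f(p)}^{-1}\cdot)$ wherever the argument lies in $I^+(f(p))$. Since $f\circ\gamma$ is now smooth with initial velocity $f_*v$ and enters $I^+(f(p))$ for $t>0$, the expansion $\exp_{f(p)}^{-1}(f(\gamma(t))) = t f_*v + O(t^2)$ identifies the right-hand side above with $g'_{f(p)}(f_*v, f_*v)$. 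Thus $g_p(v,v) = g'_{f(p)}(f_*v, f_*v)$; in particular $f_*v$ is timelike, and chronology preservation fixes it to be future-directed. Polarizing on the open timelike cone gives $g_p(v,w) = g'_{f(p)}(f_*v, f_*w)$ for all timelike $v,w$, and bilinearity together with the fact that the timelike cone spans $T_pM$ extends this to $g = f^* g'$.
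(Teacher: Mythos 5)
Your overall strategy is essentially the paper's: use chronology preservation plus strong causality to make $f$ a homeomorphism, build charts out of Lorentzian distance functions to nearby timelike-related points to upgrade $f$ to a diffeomorphism, and recover $g$ from the second derivative of $d_p^2$ along curves via Lemma~\ref{iqq} together with polarization. One point where your write-up is actually more careful than the paper's: to see that $\Phi'$ has invertible Jacobian at $f(p)$ you cannot transport non-degeneracy through $f$ (not yet known to be smooth), and a smooth injective map need not be a diffeomorphism onto its image; your genericity argument — intersecting the open dense set of good tuples on the $M'$ side, pulled back through the homeomorphism $f^{\times n}$, with the open dense set of good tuples on the $M$ side — handles this cleanly, whereas the paper only argues that the target-side functions separate points.

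There is, however, one step that is unjustified and false as written: the identity $d_{q_i}^2(x) = -g_{q_i}(\exp_{q_i}^{-1}(x),\exp_{q_i}^{-1}(x))$ on a mere convex normal neighborhood. The left-hand side is the \emph{global} Lorentzian distance, a supremum over all causal curves from $q_i$ to $x$ in $M$, and in general it can strictly exceed (or be infinite compared to) the length of the geodesic inside the convex neighborhood, because of long timelike curves that leave the neighborhood and return. You must shrink to a causally (chronologically) convex neighborhood — which strong causality provides — so that every causal curve between points of $U$ stays inside the ambient convex neighborhood, where the connecting geodesic is the unique maximizer; this is precisely the role of the chronologically convex, respectively distinguishing, neighborhoods in the paper's proof and in the proof of Lemma~\ref{iqq}. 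The same repair is needed on the $M'$ side and again in your final step where you write $(d'_{f(p)})^2 = -g'_{f(p)}(\exp_{f(p)}^{-1}\cdot,\exp_{f(p)}^{-1}\cdot)$ on $I^+(f(p))$. With that fix (plus the minor point that $q_i \notin U$, so the $q_i$ and $U$ must lie in a common convex neighborhood in which $q_i$ and $p$ are non-conjugate, guaranteeing smoothness of $\exp_{q_i}^{-1}$), your argument goes through.
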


The proof of this result could have passed through Hawking et al.\ \cite{hawking76} or Malament's result \cite{malament77} to get first a conformal isometry, but we prefer to give a self contained proof, in which the Lorentzian distance, rather than the chronological/causal relations, plays a key role.

\begin{proof}
Let $p\in M$. By strong causality there is a chronologically convex neighborhood $U\ni p$. We choose $U$ so small that it is included in a convex neighborhood. Since $f$ preserves $d$, it sends chronologically related events to chronologically related events and the same holds for $f^{-1}$. As $U$ is chronologically convex, it is sent to a chronologically convex set $U'=f(U)$. Let $W$ be a convex neighborhood of $p'=f(p)$ then we can find $w',z'\in W$, $w'\ll p' \ll z'$, $I^+(w')\cap I^-(z')\subset W$, thus we can find $x,y\in U$, $w\ll x\ll p\ll y\ll z$. From here $I^+(x')\cap I^-(y') \subset W\cap  f(I^+(x)\cap I^-(y))$.
 Thus by redefining $U$ as $I^+(x)\cap I^-(y)$ we obtain that $U'$ becomes
 $I^+(x')\cap I^-(y')$ and so contained in a convex neighborhood $W$ of $p'$. 

By chronological convexity $d\vert_{U\times U}=d_U$, where $d_U$ is the Lorentzian distance function of $(U,g\vert_U)$, and $d_U$ is finite and continuous because $U$ is contained in a convex neighborhood. Let $t_i \in T_pM, i=1, \ldots, n+1$, with $n+1$ the dimension of the spacetime, be past timelike vectors that form a basis of $T_pM$, and let $q_i=\exp_p t_i$ where $t_i$ is rescaled, if needed, so that $q_i\in U$.
Since $U$ is contained in a convex neighborhood and $q_i\ll p$ there is only one timelike geodesic in $U$ connecting $q_i$ to $p$, and furthermore there cannot be pairs of conjugate points in a convex neighborhood so $q_i$ and $p$ are not conjugate.

This implies that $d_{q_i}=\Vert\exp_{q_i}^{-1} \Vert$, where $\Vert v \Vert=\sqrt{-g_{q_i}(v,v)}$ is the Lorentzian ``norm'', is smooth in a neighborhood of $p$. Its differential at $p$ is $g(u_i, \cdot)$ where $u_i:=\hat t_i$ is a normalized past-directed timelike vector, see e.g.\ \cite[Thm.\ 5]{minguzzi13d}.
The map $U\to \mathbb{R}^{n+1}$, $r \mapsto (d_{q_1}(r), \ldots, d_{q_{n+1}}(r))$ has thus non-singular Jacobian at $p$.

By the inverse function theorem the functions $\{d_{q_i}: i=1, \ldots, n+1\}$ provide in a neighborhood of $p$ the chart of a smooth atlas. Let  $q_i'=f(q_i)$, $i=1, \ldots, n+1$ and recall that $p'=f(p)$. We can construct analogous functions $d'_{q_i'}$ in a neighborhood of $p'$. We have already argued above that $U'$ is a neighborhood of $p'$ which contains all $q_i'$, and that $U'$ is contained in a convex neighborhood of $p'$.
The functions $d'_{q_i'}$ distinguish points because  otherwise the preimage of two undistinguishable points would not be distinguished by the functions $d_{q_i}$, a contradiction.
Thus we can construct this type of chart on a neighborhood of $p$ and $p'$ and with respect to these charts the map $f$ is the identity of some open set $O\subset \mathbb{R}^{n+1}$ (this follows from distance preservations $d_{q_i'}(f(r))=d_{q_i}(r)$). This shows that $M$ and $M'$ have the same dimension and $f$ is smooth (the function components are smooth in a chart belonging to a smooth atlas).

Now, since $f$ is a diffeomorphism $f_*:T_pM\to T_{p'} M'$ is linear and of maximal rank (hence open). If a future timelike vector is sent to a non
future timelike vector then there is some timelike vector $w$ sent to a non-zero non future causal vector $z$ (i.e.\ we can find $w$ so that its image $z$ is spacelike or causal but past-directed). But if $\gamma$ is a $C^1$ curve with tangent $w$ and $\gamma'=f \circ \gamma$, we have $z=f_* w$, and hence $z$ is the tangent to $\gamma'$. But $d_p^2(\gamma(t))=d_{p'}^2(\gamma'(t))$ which is a contradiction because the former is positive for small $t$ while the latter is zero for sufficiently small $t$, as $\gamma'(t)\notin I^+(p', U')$. This shows that $f_*$ sends the future timelike cone of $T_pM$ into the future timelike cone of $T_{p'} M'$ (it is a bijection, it is sufficient to consider $f_*^{-1}$).

Thus if $v$ is future timelike $v':=f_* v$ is future timelike. Let $\gamma : I\to M$ be a smooth curve passing through $p\in M$, $p=\gamma(0)$, with tangent $v\in T_pM$, then by Lemma \ref{iqq}, setting $\gamma'=f \circ \gamma$,
\[
g_p(v,v)=-\frac{\dd^2}{\dd t^2} \frac{1}{2} d^2_p(\gamma(t))\vert_{t=0}=-\frac{\dd^2}{\dd t^2} \frac{1}{2} d'{}^2_{p'}(\gamma'(t))\vert_{t=0}=g_{p'}(v',v'),
\]
which, by the polarization identity, is equivalent to $g_{p}=f^* g_{f(p)}$, as we desired to prove.
\end{proof}

\subsection{The Lorentz-Finsler case}

In this subsection we want to extend the previous result on Lorentzian geometry to the Lorentz-Finsler case.  We work with a pair $(M,L)$ were $M$ is a paracompact connected manifold, and the  \emph{Finsler Lagrangian} $L: TM\to \mathbb{R}$ is a $C^0$, positive  and homogeneous of degree two function. We also assume that $L$ is smooth on $TM\backslash 0$ and  that has vertical Hessian $g$ of Lorentzian signature there (the differentiability conditions on $TM\backslash 0$ can be weakened, we shall not be interested in the optimal conditions).  These assumptions imply that $L$ is actually $C^1$ \cite[p.\ 5]{minguzzi13d}.
Furthermore, its is assumed that it is possible to select  a continuous  set-valued function $C: M\to TM\backslash 0$, $x\mapsto C_x$ such that $C_x$ is a component of the set $\{v\in T_xM\backslash 0 :L(x,v)\le 0\}$. The subset $C_x\subset T_xM\backslash 0$ is termed {\em future} causal cone at $x\in M$. It is often convenient to treat $C$ as a cone subbundle of $TM\backslash 0$.

We shall also use $F: C\to [0,+\infty)$, defined by $L=-\frac{1}{2} F^2$. We extend $F$ by zero to the whole $TM$, so that $F(p, \cdot)$ becomes continuous on the whole $T_pM$.

We note that $(M,C,F)$ is   a Lorentz-Finsler space \cite[Sec.\ 2.13, 3.7]{minguzzi17} (the approach used here in Subsection \ref{cnqp4}), while $(M,L)$ is a Finsler spacetime  \cite{minguzzi13d}. In particular, we will use the local properties of  geodesics, normal neighborhoods and the exponential map from \cite{minguzzi13d} while using the concept of distinction defined for more general closed cone structures in \cite{minguzzi17}.


We start generalizing the Busemann-Meyer formula to the Lorentz-Finsler case.\\

\begin{lemma} \label{qmmg}
Let $f: U \to \mathbb{R}^k$, $U\subset \mathbb{R}^d$,   be a continuous function defined on an open set $U$ and differentiable in $U\backslash \{x_0\}$ for $x_0\in U$. If $D f(x)$ converges for $x\to x_0$ to some linear map $L: \mathbb{R}^d\to\mathbb{R}^k$ then $f$ is strongly differentiable at $x_0$ with strong differential $L$ at $x_0$.
As a consequence $f$ is  Lipschitz in a neighborhood of $x_0$. If $Df(x)$ is bounded (but not necessarily convergent) in neighborood of $x_0$, then $f$ is Lipschitz in  a neighborhood of $x_0$, too.\\
\end{lemma}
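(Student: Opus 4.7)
The plan is to reduce the strong differentiability statement to a classical mean-value-inequality argument, with the one nontrivial point being that the segment between two points might pass through the singular point $x_0$ where $f$ is not differentiable. Set $h(x) := f(x) - L(x - x_0)$, which is continuous on $U$ and differentiable on $U \setminus \{x_0\}$ with $Dh(x) = Df(x) - L \to 0$ as $x \to x_0$. Fix $\epsilon > 0$ and, using the hypothesis, choose $\delta > 0$ so that $\overline{B(x_0,\delta)} \subset U$ and $\|Dh(x)\| \le \epsilon$ for every $x \in B(x_0,\delta) \setminus \{x_0\}$. Strong differentiability with differential $L$ is then the claim that
\[
|h(x) - h(y)| \le \epsilon\,|x - y| \qquad \text{for all } x,y \in B(x_0,\delta).
\]

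First I would handle segments $[x,y] \subset B(x_0,\delta)$ that do not meet $x_0$: the classical vector-valued mean value inequality (apply the real-valued mean value theorem to $t \mapsto \langle w, h(x + t(y-x))\rangle$ for unit $w$) gives $|h(x) - h(y)| \le \sup_{z \in [x,y]} \|Dh(z)\|\cdot |x-y| \le \epsilon|x-y|$. If instead $x_0$ lies on the segment from $x$ to $y$, I would first treat the case $x = x_0$ (or $y = x_0$): for $s \in (0,1]$ the point $z_s := x_0 + s(y-x_0)$ is not $x_0$, the preceding step applies on $[z_s, y]$ giving $|h(y) - h(z_s)| \le \epsilon (1-s)|y - x_0|$, and letting $s \to 0^+$ with continuity of $h$ at $x_0$ yields $|h(y) - h(x_0)| \le \epsilon |y - x_0|$. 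The remaining case $x_0 \in (x,y)$ splits at $x_0$: by the previous step applied twice,
\[
|h(x) - h(y)| \le |h(x) - h(x_0)| + |h(x_0) - h(y)| \le \epsilon(|x - x_0| + |y - x_0|) = \epsilon|x - y|,
\]
where the last equality uses collinearity of $x, x_0, y$. This establishes strong differentiability.

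The Lipschitz consequence of strong differentiability is immediate: from $|f(x) - f(y) - L(x-y)| \le \epsilon|x-y|$ one gets $|f(x) - f(y)| \le (\|L\| + \epsilon)|x-y|$ on $B(x_0,\delta)$. For the last sentence, I would run exactly the same splitting argument directly on $f$: if $Df$ is bounded by some $M$ on $B(x_0,\delta) \setminus \{x_0\}$, then the mean value inequality gives $|f(x)-f(y)| \le M|x-y|$ whenever $[x,y]$ avoids $x_0$; the limiting step $s \to 0^+$ and continuity of $f$ at $x_0$ extend this to segments with one endpoint at $x_0$, and the additive split $|f(x)-f(y)| \le |f(x)-f(x_0)| + |f(x_0)-f(y)|$ handles segments passing through $x_0$.

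The only subtlety — and the reason the lemma needs proof at all rather than being a direct citation — is that $f$ is not assumed differentiable at $x_0$, so the mean value inequality cannot be applied to a segment containing $x_0$ in its interior. This is resolved cleanly by the split-and-pass-to-limit argument above, which relies only on continuity of $f$ at $x_0$ (hypothesized) plus collinearity. No other obstacle arises; the argument is essentially one-dimensional along segments and does not use the codomain structure beyond norm estimates.
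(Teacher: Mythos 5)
Your proof is correct, and it takes a slightly different (and arguably cleaner) route than the paper's. The paper follows Nijenhuis's mean-value argument and then splits into cases by dimension: for $d\ge 2$ it first establishes the inequality $\vert f(x_2)-f(x_1)-L(x_2-x_1)\vert\le\epsilon\vert x_2-x_1\vert$ only for pairs whose segment avoids $x_0$, and then extends to all pairs by a density/continuity argument (such pairs are dense when $d\ge 2$); only for $d=1$, where that density fails, does it resort to splitting the segment at $x_0$. You instead run the split-at-$x_0$ argument uniformly in all dimensions, handling the endpoint case by letting $z_s\to x_0$ along the segment and using continuity of $h$, and then exploiting collinearity to get $\vert x-x_0\vert+\vert y-x_0\vert=\vert x-y\vert$ so that no factor is lost in the triangle inequality. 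This buys a single unified argument with no case distinction on $d$, at the cost of the (easy) limiting step; the paper's density argument is shorter for $d\ge 2$ but forces the separate one-dimensional case. Both treatments of the final Lipschitz statement (bounded but possibly non-convergent $Df$) are essentially identical. One small presentational point: it is worth stating explicitly that the inequality $\vert h(x)-h(y)\vert\le\epsilon\vert x-y\vert$ for all $x,y$ in a neighborhood of $x_0$ \emph{is} the definition of strong differentiability at $x_0$ with strong differential $L$, since the lemma's conclusion is phrased in those terms; you do use it correctly.
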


\begin{proof}
The first part is an improved statement of \cite[point (2), p.\ 970]{nijenhuis74} where the author assumes that $f$ is differentiable at $x_0$ and $L=Df(x_0)$. For $d=1$ it is well known, and consequence of L'Hôpital's rule, that if $f$ is differentiable in a neighborhood of $x_0$ and the derivatives converge to $L$, then   $f$ is differentiable at $x_0$ with derivative $L$.

So, let us assume $d\ge 2$ and let us go through the proof of \cite[point (2), p.\ 970]{nijenhuis74}. The Lagrange mean value theorem there invoked uses differentiability just on the interior of the segment $[x_1,x_2]$ there considered. So in the proof we just have to take $x_1,x_2$ such that the segment connecting them does not pass from $x_0$. Then we arrive at the equation for every pair of such points
\[
\vert f(x_2)-f(x_1)-L(x_2-x_1)\vert\le\epsilon \vert x_2-x_1\vert
\]
as in that proof. By continuity the inequality holds for every pair of points which proves that $f$ is strongly differentiable at $x_0$ with strong differential $L$.


Similarly, for the second part we can find a convex neighbourhood $U$ of $x_0$ such that for some $C>0$ we have $\vert Df\vert<C$ on $U\setminus
 \{x_0\}$. Then for $x_1,x_2\in U$ such that the interior of the segment connecting them does not contain $x_0$ we have
 \[
 \vert f(x_2)-f(x_1)\vert \leq C \vert x_2- x_1\vert,
 \]
 and, using continuity, we can conclude the proof along the same lines as for the first part.
\end{proof}

\begin{lemma}
Let a spray be smooth on $TM\backslash 0$. Then it is  $C^{1,1}$ in a neighborhood of the zero section with zero derivatives on the zero section.\\
\end{lemma}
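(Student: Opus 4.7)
In local coordinates $(x^i,v^i)$ on $TM$ a spray takes the form $S(x,v)= v^i\,\p_{x^i} - 2G^i(x,v)\,\p_{v^i}$, where $G^i$ is smooth on $TM\setminus 0$ and positively homogeneous of degree two in $v$ (the homogeneity being equivalent to the spray condition $[\Delta,S]=S$ with $\Delta=v^i\p_{v^i}$). Since $v^i\p_{x^i}$ is polynomial in $v$, the content of the statement reduces to showing that each $G^i$ is $C^{1,1}$ on a neighborhood of the zero section, with vanishing first partial derivatives on $\{v=0\}$.

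The plan is to exploit homogeneity to obtain sharp pointwise bounds near the zero section. Since $\p_{x^j}$ preserves the degree of homogeneity in $v$ and $\p_{v^j}$ lowers it by one, $\p_{x^j}G^i$ is homogeneous of degree $2$ in $v$, $\p_{v^j}G^i$ of degree $1$, the mixed second partials $\p_{x^j}\p_{v^k}G^i$ of degree $1$, and the pure $v$-derivatives $\p_{v^j}\p_{v^k}G^i$ of degree $0$, while $\p_{x^j}\p_{x^k}G^i$ has degree $2$. Fix a compact neighborhood $K\subset M$ of a chosen point $x_0$: smoothness away from the zero section and compactness of the sphere bundle $\{(x,v): x\in K,\,|v|=1\}$ yield a single constant $M$ bounding $G^i$ and all its first and second partials on that set. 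Rescaling via the respective homogeneities then gives, on a cylindrical neighborhood $U$ of $(x_0,0)$,
\[
|G^i|\le M|v|^2,\qquad |\p_{x^j}G^i|\le M|v|^2,\qquad |\p_{v^j}G^i|\le M|v|,
\]
and a uniform bound on all second partials of $G^i$ on $U\setminus\{v=0\}$.

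These estimates already force $G^i$ and each first partial to extend continuously to all of $U$ by $0$ on $\{v=0\}$. Applying the first part of Lemma \ref{qmmg} at each point $(x,0)$ (with that point playing the role of the exceptional $x_0$ and the differentiability hypothesis being provided on $U\setminus\{v=0\}$), the continuity of the extended first partials with limit $0$ forces $G^i$ to be strongly differentiable across the zero section with zero differential; hence $G^i\in C^1(U)$ with first partials vanishing on $\{v=0\}$. For the $C^{1,1}$ upgrade, apply the second part of Lemma \ref{qmmg} to each first partial of $G^i$: such a function is continuous on $U$, smooth on $U\setminus\{v=0\}$, and has uniformly bounded derivatives there (these being second partials of $G^i$). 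The lemma concludes that it is Lipschitz on a neighborhood of the zero section, which yields $G^i\in C^{1,1}$ near $\{v=0\}$ with the claimed vanishing of first derivatives.

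The main subtlety will be that Lemma \ref{qmmg} is phrased for an isolated singular point $x_0$, whereas our singular locus is the entire submanifold $\{v=0\}$. I expect to deal with this by applying the lemma fibrewise at each zero-section point and then gluing through the continuity of the extended first partials; the fact that every estimate above is uniform in $x\in K$ (thanks to compactness of the sphere bundle over $K$) ensures that the resulting Lipschitz constants are uniform on a full tubular neighborhood of $\pi^{-1}(K)\cap\{v=0\}$.
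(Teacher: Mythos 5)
Your proposal is correct and follows essentially the same route as the paper's proof: exploit the positive homogeneity degrees of $G^i$ and its first and second partials, bound them on the compact unit sphere bundle over a compact neighborhood, and then invoke the two parts of Lemma \ref{qmmg} (equivalently, the Lagrange mean value argument) to get strong differentiability with zero differential on the zero section and Lipschitz first derivatives nearby. Your explicit flagging of the issue that the singular locus is the whole zero section rather than a single point is a welcome extra care that the paper leaves implicit.
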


In other words the local coefficients $G^\alpha(x,y)$ are $C^{1,1}$  in a neighborhood of the zero section $\{y=0\}$.\\

\begin{proof}
We use local coordinates on $TM$ induced by a chart on $M$.
Since $G^\alpha(x,y)$ is positive homogeneous of degree 2, the first derivatives are positive homogeneous of degree 2 or 1 depending on whether we differentiate with respect to $x^\beta$ or $y^\beta$. Let us introduce a complete Riemannian metric $h$ on $M$ and let us consider the unit sphere bundle of it.
Let us consider a compact neighborhood of $p$ and the $h$-unit sphere bundle projecting on it, which is thus compact. All the derivatives of the spray will be bounded on such bundle and so on the corresponding ball (minus the zero section) by positive homogeneity.  Now, define $G^\alpha$ to be zero on the zero section. Since the derivatives converge to zero at the zero section, by Lemma \ref{qmmg} $G^\alpha$ has strong differential zero at the zero section, and so $G^\alpha$ is locally Lipschitz on $TM$.

Now, let us consider the derivatives of $\frac{\p}{\p x^\beta}G^\alpha$, $\frac{\p}{\p y^\beta}G^\alpha$ on $TM\backslash 0$ extending them to the zero section as $0$. Their derivative, namely the second derivatives of $G^\alpha$,  $\frac{\p^2}{\p x^\beta\p x^\gamma}G^\alpha$, $\frac{\p^2}{\p y^\beta\p x^\gamma}G^\alpha$, $\frac{\p^2}{\p y^\beta\p y^\gamma}G^\alpha$, are positive homogeneous of degree 2, 1 and 0, respectively. As they are bounded on the $h$-unit bundle over a compact neighborhood of $p$, they are bounded over the corresponding ball (minus the zero section).

An application of the Lagrange mean value theorem, going in a way similar to what has been done in the proof of Lemma \ref{qmmg}, see also \cite[Thm.\ 4]{furi91}, implies that the derivatives of $G^\alpha$ are Lipschitz in a neighborhood of the zero section.
\end{proof}

\begin{lemma} \label{kktr}
Let us consider a
 spray  which is smooth on $TM\backslash 0$, and let us consider the exponential map bijection $\exp_p: O \to N$, $O\subset T_pM$, where $N$ is a normal open neighborhood of $p$. Then for some choice of $O$ and $N$, $\exp_p$ and $\exp_p^{-1}$ are $C^{1,1}$ and actually $C^\infty$ outside the origin (resp.\ $p$).\\
\end{lemma}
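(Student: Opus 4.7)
My plan is to combine the previous lemma with the regularity theory of flows of $C^{1,1}$ vector fields and a $C^{1,1}$ inverse function theorem. In a chart, the spray is realized as the vector field $S = y^\alpha \partial_{x^\alpha} - 2G^\alpha(x,y)\partial_{y^\alpha}$ on $TM$, and the exponential map factors as $\exp_p(v) = \pi(\phi_1^S(v))$, where $\phi_t^S$ is the flow of $S$ and $\pi\colon TM \to M$ is the canonical projection.

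By the previous lemma, $S$ is $C^{1,1}$ on an open neighborhood of the zero section of $TM$ and smooth on $TM\setminus 0$. I would then invoke the fact that the flow of a $C^{1,1}$ vector field is itself $C^{1,1}$ in the initial data: the variational equation $\frac{d}{dt}D\phi_t^S = DS(\phi_t^S)\cdot D\phi_t^S$ is linear in $D\phi_t^S$ with Lipschitz coefficients, so $D\phi_t^S$ exists and is Lipschitz in initial data. Consequently $\exp_p$ is $C^{1,1}$ on some neighborhood $O'$ of $0\in T_pM$. Moreover, for $v\neq 0$ uniqueness of integral curves together with the fact that points of the zero section are fixed by $\phi_t^S$ implies that the trajectory $t\mapsto \phi_t^S(v)$ remains in $TM\setminus 0$; hence $\exp_p$ is smooth on $O'\setminus\{0\}$.

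The scaling relation $\exp_p(tv)=\gamma_v(t)$ (a consequence of the degree-two positive homogeneity of $G^\alpha$) differentiated at $t=0$ yields $d(\exp_p)_0=\mathrm{id}_{T_pM}$ under the canonical identification $T_0(T_pM)\cong T_pM$. A $C^{1,1}$ inverse function theorem then produces open neighborhoods $O\subset O'$ of $0$ and $N=\exp_p(O)$ of $p$ such that $\exp_p\colon O\to N$ is a $C^{1,1}$ bijection with $C^{1,1}$ inverse. After possibly shrinking $O$, continuity keeps $d(\exp_p)_v$ invertible for every $v\in O\setminus\{0\}$, so the classical smooth inverse function theorem applied pointwise gives that $\exp_p^{-1}$ is smooth on $N\setminus\{p\}$.

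The main delicate step is the simultaneous appeal to the $C^{1,1}$ inverse function theorem and to the propagation of $C^{1,1}$ regularity from the vector field to its flow. Both results are classical but sit slightly outside the integer $C^k$ framework; the first follows by running the standard contraction-mapping proof of the inverse function theorem and tracking the Lipschitz bound on the derivative of the inverse (equivalently, differentiating the identity $\exp_p\circ \exp_p^{-1}=\mathrm{id}$ and using that the matrix inversion map is smooth on $\mathrm{GL}$), while the second reduces, as sketched above, to an ODE with Lipschitz coefficients via Lemma \ref{qmmg}.
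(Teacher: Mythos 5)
Your proposal is correct and follows essentially the same route as the paper: write the geodesic flow as an ODE system, use the preceding lemma to get a $C^{1,1}$ spray near the zero section (smooth outside it), invoke smooth/Lipschitz dependence on initial conditions for the flow, note that the differential of $\exp_p$ at the origin is the identity, and conclude via an inverse function theorem argument. The only cosmetic difference is that you package the last step as a $C^{1,1}$ inverse function theorem, whereas the paper phrases it via strong differentiability and a Lipschitz-inverse formula, which amounts to the same thing.
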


Restricted to the only direction $\exp_p$ and to Finsler sprays the result appeared before \cite[Prop.\ 4.1]{koehler14} where it is attributed to S. Ivanov. The weaker $C^1$ result goes back at least to Warner \cite[Thm.\ 4.5]{warner65}.

\begin{proof}
The exponential map is determined by the following system
\begin{align*}
\dot x^\alpha&=y^\alpha, \\
\dot y^\beta&=-2G^\beta(x,y).
\end{align*}
By ODE theory it is well known that if the spray is $C^k$, $k\ge 1$, then the exponential map is $C^k$ (this whether we consider the restriction outside the zero section or not). The exponential map is strongly differentiable at the zero section with the strong differential being the identity \cite{minguzzi13d} (hence it is invertible). This implies that the Jacobian of the exponential map will be invertible in a neighborhood of the zero section. This also implies that if the Jacobian, namely $(\exp_p)_*$ is Lipschitz so is  $(\exp_p^{-1})_*$ (use e.g.\ formula \cite[Prop.\ 2.5]{howard97}). The fact that a $C^{1,1}$ spray leads to a $C^{1,1}$ exponential map with $C^{1,1}$ inverse was  noted in  \cite[Remark 6]{minguzzi13d}.
\end{proof}

\begin{theorem} \label{gwpo}
Let $(M,L)$ be a Finsler spacetime and let $N$ be a normal open neighborhood of $p$. Let $f,g: N \to \mathbb{R}$,
\[
f(q):= F(p, \exp^{-1}_p q), \qquad l(q):= -2L(p, \exp^{-1}_p q).
\]
For every smooth curve $\gamma: [0,1]\to N$, $\gamma(0)=p$,  $\dot \gamma(0)=v\ne 0$, the functions $f(\gamma(t))$ and $l(\gamma(t))$ are respectively $C^0$ (actually $C^1$ if $\gamma$ is timelike) and $C^2$ on some interval $[0,\epsilon)$, $\epsilon>0$, and
\begin{align}\label{cmtq1}
F(p,v)&=\frac{\dd}{\dd t}  f(\gamma(t))\vert_{t=0} =\lim_{t\to 0^+} \frac{1}{t} f(\gamma(t)) , \\
-2L(p,v)&=\frac{1}{2}\frac{\dd^2}{\dd t^2}  l(\gamma(t))\vert_{t=0} = \frac{1}{2} \lim_{t\to 0^+}  \frac{1}{t} \frac{\dd}{\dd t}   l(\gamma(t))  . \label{cmtq}
\end{align}
\end{theorem}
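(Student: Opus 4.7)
The plan is to work in $T_pM$ by setting $\phi(y):=-2L(p,y)$ and $\tilde{\gamma}(t):=\exp_p^{-1}(\gamma(t))$, so that $l(\gamma(t))=\phi(\tilde{\gamma}(t))$ and $f(\gamma(t))=F(p,\tilde{\gamma}(t))$. By Lemma \ref{kktr}, $\exp_p^{-1}$ is $C^{1,1}$ on a neighbourhood of $p$ and $C^\infty$ off $p$, with identity strong differential at $p$; composing with the smooth curve $\gamma$ makes $\tilde{\gamma}$ a $C^{1,1}$ curve near $0$, smooth for $t\in(0,\epsilon)$, with $\tilde{\gamma}(0)=0$, $\dot{\tilde{\gamma}}(0)=v$, and $\tilde{\gamma}(t)=tv+O(t^2)$ via the Lipschitz bound on $\dot{\tilde{\gamma}}$. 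The function $\phi$ is continuous on $T_pM$, positively homogeneous of degree $2$, smooth off the origin, and $C^1$ globally (since $L$ is $C^1$), with $D\phi(0)=0$ forced by the homogeneity.

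The heart of the argument is to prove that $l\circ\gamma$ extends $C^2$-smoothly to $t=0$ and to compute its derivatives there. For $t>0$ small the chain rule gives $(l\circ\gamma)'(t)=D\phi(\tilde{\gamma}(t))\cdot\dot{\tilde{\gamma}}(t)$ and $(l\circ\gamma)''(t)=D^2\phi(\tilde{\gamma}(t))(\dot{\tilde{\gamma}}(t),\dot{\tilde{\gamma}}(t))+D\phi(\tilde{\gamma}(t))\cdot\ddot{\tilde{\gamma}}(t)$. Since $D\phi$ is degree-$1$ homogeneous and continuous with $D\phi(0)=0$, one has $|D\phi(\tilde{\gamma}(t))|=O(t)$, so $(l\circ\gamma)'(t)\to 0$ and the second addend in $(l\circ\gamma)''(t)$ vanishes (using boundedness of $\ddot{\tilde{\gamma}}$ near $0^+$). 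For the first addend one exploits the crucial fact that $D^2\phi$ is degree-$0$ homogeneous, whence $D^2\phi(\tilde{\gamma}(t))=D^2\phi(\tilde{\gamma}(t)/t)$ for $t>0$; since $\tilde{\gamma}(t)/t\to v\ne 0$ and $\phi$ is smooth off the origin, this converges to $D^2\phi(v)$, and Euler's identity yields $D^2\phi(v)(v,v)=2\phi(v)=-4L(p,v)$. A standard one-sided extension lemma (a $C^0([0,\epsilon))\cap C^2((0,\epsilon))$ function whose first and second derivatives admit finite limits at $0^+$ is $C^2$ at $0$ with those values) then gives $l\circ\gamma\in C^2([0,\epsilon))$ with $(l\circ\gamma)(0)=(l\circ\gamma)'(0)=0$ and $(l\circ\gamma)''(0)=-4L(p,v)$, proving both equalities in (\ref{cmtq}) (the second via L'H\^opital applied to $(l\circ\gamma)'(t)/t$).

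For (\ref{cmtq1}), use $F(p,\cdot)=0$ off $C_p$ and $F=\sqrt{-2L(p,\cdot)}$ on $C_p$. If $v\notin\bar{C}_p$, then by $\tilde{\gamma}(t)/t\to v$ together with closedness and positive homogeneity of $\bar{C}_p$ one has $\tilde{\gamma}(t)\notin C_p$ for small $t>0$, so $f\circ\gamma\equiv 0$ near $0^+$ and (\ref{cmtq1}) is trivial. In the causal cases, the Taylor expansion $(l\circ\gamma)(t)=-2L(p,v)\,t^2+o(t^2)$ obtained above gives $f(\gamma(t))=\sqrt{(l\circ\gamma)(t)}=t\sqrt{-2L(p,v)+o(1)}$ on the region where $\tilde{\gamma}(t)\in C_p$ (and $f(\gamma(t))=0$ elsewhere), whence $f(\gamma(t))/t\to F(p,v)$ in both the timelike ($-2L(p,v)>0$) and the null ($-2L(p,v)=0$) case. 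For timelike $v$ the positivity of the radicand near $0$ makes $f\circ\gamma$ genuinely $C^1$ at $0$: its derivative on $(0,\epsilon)$, computed as $(l\circ\gamma)'/(2\sqrt{l\circ\gamma})$, has limit $F(p,v)$ by the same Taylor expansion, matching the right-derivative at $0$. Continuity of $f\circ\gamma$ in the null case follows from continuity of $F$ and of $\tilde{\gamma}$. The main obstacle is precisely the $C^2$ extension of $l\circ\gamma$: $\phi$ is generally not $C^2$ at the origin in Lorentz-Finsler geometry (otherwise it would be a quadratic form, reducing to the smooth Lorentzian case), so the second derivative can only be controlled by exploiting the degree-$0$ homogeneity of $D^2\phi$ together with the fact that $\tilde{\gamma}$ approaches $0$ along the definite direction $v$.
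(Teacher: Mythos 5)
Your proof is correct and follows essentially the same route as the paper's: pull back to $T_pM$ via the $C^{1,1}$ exponential inverse with identity differential at the origin, kill the $D\phi(\tilde\gamma(t))\cdot\ddot{\tilde\gamma}(t)$ term using $D\phi(0)=0$ and boundedness of $\ddot{\tilde\gamma}$, exploit the degree-$0$ homogeneity of the vertical Hessian together with $\tilde\gamma(t)/t\to v$ to pass to the limit, and finish with L'H\^opital. The only (harmless) divergence is that the paper obtains (\ref{cmtq1}) directly from the positive homogeneity and continuity of the zero-extended $F$, whereas you go through a case analysis on the causal character of $v$ and the square root of the Taylor expansion of $l\circ\gamma$.
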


\begin{proof}

Let $\gamma: [0,1]\to M$, $\gamma(0)=p$, $v:=\dot \gamma(0)\ne 0$  be any smooth curve , and let $v_t=\exp^{-1}_p \gamma(t)$. Note that $\lim_{t\to 0^+} \frac{1}{t} v_t=(\exp^{-1}_p)_* v$. But $(\exp_p)_*: TT_p M\to T_p M$ (we can identify $TT_pM$ with $T_pM$) is well known to be the identity \cite[Thm.\ 11.1.1]{shen01} \cite[5.7.3]{bao00} \cite[Eq.\ (39)]{minguzzi13d} (this result is independent of the signature of the Hessian metric, in fact it depends only on the spray), and so it is the inverse $(\exp^{-1}_p)_*$. We conclude $\lim_{t\to 0^+} \frac{1}{t} v_t=v$ and so
\[
\lim_{t\to 0^+} \frac{1}{t} f(\gamma(t))=\lim_{t\to 0^+} F(p, \frac{1}{t} v_t)=F(p, v),
\]
where we used the positive homogeneity and continuity of $F$. Note that if $\gamma$ is timelike then $v_t$ is $C^1$ and timelike and so $F(p,v_t)$ is $C^1$ because  $F$  is evaluated on the smooth region.

Let us prove the second equation. From \cite[Thm.\ 5]{minguzzi13d} (but also from Lemma \ref{kktr}) we know that $l: N \to \mathbb{R}$ is $C^{1,1}$ with differential at $q$, $\dd  l = - 2g_{P(p,q)}(P(p,q),\cdot)$ where $P(p,q)$, called {\em position vector}, is the tangent vector at $q$ of the geodesic $\sigma_{p,q}: [0,1] \to N$, $\sigma_{p,q}(0)=p$, $\sigma_{p,q}(1)=q$. Thus $P(p,\gamma(t))=(\exp_p)_* v_t$ and since $\exp_p$ is $C^1$, $(\exp_p)_*$ is $C^0$ and so   $P(p,\gamma(t))$ converges to zero and $\lim_{t\to 0^+} \frac{1}{t} P(p,\gamma(t))=\lim_{t\to 0^+}  (\exp_p)_* (\frac{1}{t}v_t)=(\exp_p)_* v=v$.

The function 
\[
h(t):=\frac{\dd}{\dd t} l(\gamma(t))=\langle \dd l, \dot \gamma(t)\rangle=-2g_{P(p,\gamma(t))}(P(p,\gamma(t)),\dot\gamma(t))
\]
is $C^0$ and vanishes for $t=0$ as $P(p,\gamma(0))=0$ (note that  $g_{P(p,\gamma(t))/t}(P(p,\gamma(t))/t,\cdot)$ $ \to g_v(v,\cdot)$ for $t\to 0$). A standard result of real analysis which follows from L'Hôpital's rule  states that if for a continuous function $h:[0,b)\to \mathbb{R}$, $h'(t)$ exists for $t>0$ and converges for $t\to 0^+$ then $h'(0)$ exists and coincides with that limit. For sufficiently small $t>0$ (we just want to restrict ourselves to a segment that does not pass twice from $p$),  we can write  $l(\gamma(t))=-2L(p, \exp_p^{-1}(\gamma(t)))$ where both $-2L(p, \cdot)$ and $\exp_p^{-1}$ are twice differentiable in the point of interest (as the former function is evaluated over non-zero vectors, while the regularity of the latter function is smooth outside the zero section).  Recalling that $v_t=\exp_p^{-1} \gamma(t)$, we have by the $C^{1,1}$ regularity of $\exp_p^{-1}$ near the zero section that $v_t$ is $C^{1,1}$ in a neighborhood of $t=0$, and so $\ddot v_t$ remains bounded as $t$ approaches zero. For sufficiently small $t>0$,
\begin{align*}
\frac{\dd h}{\dd t}&= -2\frac{\dd^2}{\dd t^2}L(p, v_t)=-2 \Big\{\frac{\partial L}{\partial y^{\mu}\partial y^{\nu}}\dot{v}^{\mu}_t\dot{v}^{\nu}_t+\frac{\p L}{\p y^\mu}(p,v_t) \ddot v_t^\mu\Big\}\\
&=-2 \Big\{g_{v_t}(\dot v_t,\dot v_t) + \frac{\p L}{\p y^\mu}(p,v_t) \ddot v_t^\mu\Big\}
\end{align*}
The second term vanishes for $t\to 0^+$ because $L$ is $C^1$ and $v_t\to 0$, while $\ddot v_t$ remains bounded. Now, due to $v_t$ being $C^1$,  $\lim_{t\to 0^+} \dot v_t=\dot v_0= \lim_{t\to 0^+} \frac{1}{t} v_t=v$, thus $g_{v_t}(\dot v_t,\dot v_t)=g_{v_t/t}(\dot v_t,\dot v_t)\to g_{v}(v,v)=2L(p,v)$, and we conclude that $\lim_{t\to 0^+}h'(t)=-4 L(p,v)$ thus
\[
\frac{\dd^2}{\dd t^2}  l(\gamma(t))\vert_{t=0}= -4L(p,\dot \gamma(0)).
\]
\end{proof}

\begin{theorem} \label{cboo}
Let $(M,L)$ be a future distinguishing Lorentz-Finsler spacetime and let $d$ be the associated Lorentz-Finsler distance, then for every smooth curve $\gamma: [0,1]\to M$, $\gamma(0)=p$,  $\dot \gamma(0)=v\ne 0$, with image in $J^+(p)$ (for instance, this is the case if $\gamma$ is a continuous causal curve and $v$ is causal \cite[Thm.\ 6]{minguzzi13d})
 the function $d_p(\gamma(t))$ and $d_p^2(\gamma(t))$ are respectively $C^0$ (actually $C^1$ if $\gamma$ is timelike) and $C^2$ on an interval $[0,\epsilon)$, $\epsilon>0$, and
\begin{align}\label{cmtq1b}
F(p,v)&=\frac{\dd}{\dd t}  d_p(\gamma(t))\vert_{t=0} =\lim_{t\to 0^+} \frac{1}{t} d_p(\gamma(t)) , \\
F^2(p,v)&=\frac{\dd^2}{\dd t^2} \frac{1}{2} d^2_p(\gamma(t))\vert_{t=0} =\frac{1}{2} \lim_{t\to 0^+}  \frac{1}{t} \frac{\dd}{\dd t}  d_p^2(\gamma(t)) . \label{cmtqb}
\end{align}
\end{theorem}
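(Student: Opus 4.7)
My plan is to reduce the claim to Theorem \ref{gwpo} by localizing $d_p$ around $p$ through the future distinguishing hypothesis, and then identifying $d_p$ with the function $f$ (and $d_p^2$ with $l$) on a sufficiently small neighborhood of $p$ in $J^+(p)$.

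First, I would exploit future distinguishing of $(M,L)$ to choose an arbitrarily small neighborhood $U$ of $p$ such that every future-directed causal curve with one endpoint at $p$ and another in $U$ is entirely contained in $U$; this is precisely the mechanism used in the proof of Lemma \ref{iqq}, and on such a $U$ the global distance satisfies $d_p|_U = d^U_p$, where $d^U_p$ denotes the Lorentz-Finsler distance of $(U, L|_{TU})$. I would shrink $U$ further so that $U \subset N$, with $N$ a normal neighborhood of $p$, and so that $U$ is star-shaped with respect to the future causal radial geodesics from $p$; e.g.\ $U := \exp_p(\Omega) \cap V$, where $V$ is a distinguishing neighborhood and $\Omega \subset T_pM$ is a sufficiently small open star-shaped neighborhood of $0$.

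Next, I would invoke a Lorentz-Finsler Gauss-type lemma in $N$ (applicable since $L$ is smooth with Lorentzian vertical Hessian on $TM\setminus 0$, and since the squared distance function $l = -2L(p,\exp_p^{-1}(\cdot))$ is $C^{1,1}$ with differential $-2 g_{P}(P,\cdot)$ by Lemma \ref{kktr} and \cite{minguzzi13d}) to conclude that for every $q \in J^+(p) \cap U$ the radial geodesic $s \mapsto \exp_p(s \exp_p^{-1} q)$ lies in $U$ (by star-shapedness) and is the unique maximizer of the Lorentz-Finsler length among causal curves from $p$ to $q$ in $N$. This gives
\[
d_p(q) \;=\; d^U_p(q) \;=\; F(p, \exp_p^{-1} q) \;=\; f(q), \qquad d_p^2(q) \;=\; l(q),
\]
for every $q \in J^+(p) \cap U$. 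Since by hypothesis $\gamma(t) \in J^+(p)$ and by continuity $\gamma(t) \in U$ for small $t\ge 0$, the two identifications hold along $\gamma$ on some interval $[0,\epsilon)$.

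Finally, applying Theorem \ref{gwpo} directly to $\gamma$ transfers the asserted regularity (continuity of $d_p\circ\gamma$, $C^1$ when $\gamma$ is timelike, and $C^2$ for $d_p^2\circ\gamma$) and the limit formulas \eqref{cmtq1}--\eqref{cmtq} into \eqref{cmtq1b}--\eqref{cmtqb}. The main obstacle is the Gauss-lemma identification $d^U_p = F(p,\exp_p^{-1}(\cdot))$ on $J^+(p)\cap U$ in the present Lorentz-Finsler setting, which must rest on the maximality of the radial causal geodesic in a normal neighborhood and hence on the precise smoothness/convexity structure carried by $L$ off the zero section as formulated in \cite{minguzzi13d}; everything else is a rather mechanical transcription of Theorem \ref{gwpo}.
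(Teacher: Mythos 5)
Your proposal follows essentially the same route as the paper: restrict to a distinguishing neighborhood $U$ contained in a normal neighborhood $N$ so that $d_p\vert_{J^+(p)\cap U}$ coincides with the normal-neighborhood distance, identify that with $F(p,\exp_p^{-1}(\cdot))$ via the maximality of radial causal geodesics (the paper cites \cite[Thm.\ 6]{minguzzi13d} for exactly the Gauss-lemma-type step you flag as the main obstacle), and then conclude by Theorem \ref{gwpo}. The argument is correct and no substantive difference from the paper's proof exists.
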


The former formula is the Lorentz-Finsler generalization of the Busemann-Mayer formula, while the latter formula seems to be new also in positive signature, see Thm.\ \ref{cnww}.

The theorem states that  the Finsler fundamental function can be recovered from the Lorentz-Finsler distance $d$. Of course,  analogous past statements hold.


For the validity of the second equation $L$ must  be $C^2$ on the slit tangent bundle and so have finite Lorentzian Hessian also at the boundary of the future timelike cone, but need not be defined on the whole slit tangent bundle. In the proof of the latter equation  the second derivative of $L$ is only invoked outside the zero section.

\begin{proof}
Let $U$ be a distinguishing neighborhood for $p$ contained in a normal neighborhood $N$ (the definition of distinguishing neighborhood is the same as for Lorentzian geometry, see \cite[Thm.\ 4.44(iii)]{minguzzi18b}, for the existence of normal neighborhood, see \cite[Thm.\ 4]{minguzzi13d}). Every future directed causal curve starting from $p$ an ending in $U$ must necessarily be contained in $U$ and so in the normal neighborhood $N$. This means that for $q\in J^+(p)\cap U$, $d(p,q)=d_U(p,q)=\tilde d_p(q)$ where $\tilde d$ is the Lorentz-Finsler distance of the normal neighborhood.
Since for sufficiently small $t$, $q:=\gamma(t)\in J^+(p,N)$, by \cite[Thm.\ 6]{minguzzi13d}  $\exp_p^{-1} q$ is causal and so  $\tilde d_p(q)=F(p,\exp_p^{-1} q)$, thus for  $q\in J^+(p)\cap U$, $d_p(q)=F(p,\exp_p^{-1} q)$, $d^2_p(q)=-2L(p,\exp_p^{-1} q)$.
 The result now follows from Thm.\ \ref{gwpo}.
\end{proof}

\begin{theorem}
Let $(M,L)$  and $(M',L')$ be  strongly causal smooth Finsler spacetimes and let $f: M \to M'$ be a distance preserving bijection:
\[
d'(f(p),f(q))=d(p,q).
\]
 Then $M$ and $M'$  have the same dimension, $f$ is a (smooth) diffeomorphism such that $f_*$ sends at every point the future timelike cone into the future timelike cone, and for every $p\in M$, and causal vector $v\in T_pM$
 \[
 F(p,v)=F'(f(p),f_*(v)).
 \]
\end{theorem}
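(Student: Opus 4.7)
The plan is to adapt the Lorentzian argument, using the Lorentz-Finsler Busemann-Mayer formula of Theorem~\ref{cboo} in place of Lemma~\ref{iqq} and being extra careful in the chart construction, since in the Finsler case the fundamental tensor $g_y$ depends on the tangent vector. First, distance preservation combined with $I=\{d>0\}$ (valid in the proper Lorentz-Finsler setting, as in the proof of Theorem~\ref{ckkq}) shows that $f$ is a chronological order-isomorphism carrying $I(a,b)$ onto $I'(f(a),f(b))$; strong causality makes the Alexandrov topology coincide with the manifold topology on both sides, so $f$ and $f^{-1}$ are continuous.

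To build a smooth chart at $p\in M$ out of distance functions, I would use the Legendre map $v\mapsto g_v(v,\cdot)=\dd L|_v$ on $\mathrm{Int}(C_p)$, whose differential $w\mapsto g_v(w,\cdot)$ is an isomorphism by non-degeneracy of the fundamental tensor; hence this map is a local diffeomorphism onto an open subset of $T_p^{*}M$. One can therefore choose future timelike $T_1,\dots,T_{n+1}\in T_pM$ (with $n+1:=\dim M$) such that $\{g_{T_i}(T_i,\cdot)\}$ is a basis of $T_p^{*}M$, and then for $\epsilon>0$ sufficiently small the points $q_i:=\exp_p(-\epsilon T_i)$ lie in $I^-(p)$ inside a convex normal neighbourhood of $p$. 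On a neighbourhood of $p$ the identity $d_{q_i}(r)=F(q_i,\exp_{q_i}^{-1}r)$ from the proof of Theorem~\ref{cboo} holds and is smooth, since $\exp_{q_i}^{-1}p$ is timelike and hence lies in the smooth locus of $F(q_i,\cdot)$. A direct computation via the chain rule and the Finsler Gauss lemma yields $\dd d_{q_i}|_p$ proportional to $g_{T_i}(T_i,\cdot)$, so $\phi:=(d_{q_1},\dots,d_{q_{n+1}})$ is a smooth chart near $p$ by the inverse function theorem.

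Running the identical construction at $p':=f(p)$ produces a smooth chart $\phi'=(d'_{r'_1},\dots,d'_{r'_{n'+1}})$, where $n'+1:=\dim M'$. Shrinking to $W:=V\cap f^{-1}(V')$ (open by the first paragraph), the transition map $\phi'\circ f\circ \phi^{-1}$ is a homeomorphism between open subsets of $\mathbb R^{n+1}$ and $\mathbb R^{n'+1}$, so invariance of domain forces $n=n'$. By distance preservation the $j$-th component of this transition reads $d_{s_j}\circ \phi^{-1}$ with $s_j:=f^{-1}(r'_j)\in I^-(p)$. Since the analogous parameter $\epsilon'$ at $p'$ can be taken small, $r'_j$ is close to $p'$, so $s_j$ is close to $p$ by continuity of $f^{-1}$; combined with $s_j\in I^-(p)$ this places $s_j$ in a convex normal neighbourhood of $p$, and makes $d_{s_j}$ smooth near $p$ by the argument above. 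The transition map is therefore smooth, and $f$ is a diffeomorphism.

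Finally, if $v\in T_pM$ is future timelike and $\gamma(0)=p$, $\dot\gamma(0)=v$, then distance preservation gives $d'_{p'}(f(\gamma(t)))=d_p(\gamma(t))>0$ for small $t>0$, placing $f\circ\gamma$ in $I'^+(p')$; smoothness of $f$ forces $f_* v$ into the open future timelike cone of $T_{p'}M'$, and the same argument applied to $f^{-1}$ shows $f_*$ bijects the open future timelike cones and, by continuity, also their closures and null boundaries. Applying Theorem~\ref{cboo} at $p$ to $\gamma$ and at $p'$ to $f\circ\gamma$ (each timelike from its basepoint) yields
\[
F(p,v)=\lim_{t\to 0^+}\frac{1}{t}d_p(\gamma(t))=\lim_{t\to 0^+}\frac{1}{t}d'_{p'}(f(\gamma(t)))=F'(p',f_*v),
\]
and both sides vanish on future null vectors, completing the identification on the entire future causal cone. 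The principal obstacle is the chart construction of the second paragraph: without bilinearity one cannot just take a basis of past timelike vectors and conclude, and the Legendre map together with the Finsler Gauss lemma is needed to pin down the linear independence of $\{\dd d_{q_i}|_p\}$.
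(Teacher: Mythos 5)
Your proposal is correct and follows essentially the same route as the paper: it reduces to the Lorentzian argument, replaces the bilinear-form reasoning for the linear independence of the differentials $\dd d_{q_i}\vert_p$ by the invertibility of the Legendre map $v\mapsto g_v(v,\cdot)$, and recovers $F$ from $d$ via the Lorentz--Finsler Busemann--Mayer formula of Theorem \ref{cboo}. The only (cosmetic) differences are that you invoke the first-order formula (\ref{cmtq1b}) where the paper uses the second-order one (\ref{cmtqb}), and you build an independent chart at $p'$ and apply invariance of domain rather than pushing the chart forward by $f$ so that the transition map is the identity.
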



\begin{proof}
The proof  requires minimal modifications with respect to the isotropic case.
We need to change just the third paragraph that reads as follows:

\begin{quote}
This implies that $d_{q_i}(r)=F(q_i, \exp_{q_i}^{-1}(r))$, which  is smooth in a neighborhood of $p$ as $\exp_{q_i}^{-1}(r)$ is timelike. Its differential at $p$ is $g_{u_i}(u_i, \cdot)$ where $u_i:=\hat t_i$ is a normalized past-directed timelike vector, see e.g.\ \cite[Thm.\ 5]{minguzzi13d}.

The Legendre map $\ell: T_pM\backslash 0 \to T_p^*M\backslash 0$, $v\mapsto g_v(v,\cdot)$ has a Jacobian whose determinant is that of $g_v$, thus it is invertible and a diffeomorphism near some chosen normalized past-directed timelike vector $v$, which implies that if vectors $t_i$ are chosen so that $u_i$ are close to $v$, the images $g_{u_i}(u_i, \cdot)$ will be linearly independent.
The map $V\to \mathbb{R}^{n+1}$, $r \mapsto (d_{q_1}(r), \ldots, d_{q_{n+1}}(r))$ has thus non-singular Jacobian at $p$.
\end{quote}

The sixth paragraph changes as follows:

\begin{quote}
Thus if $v$ is future-directed timelike $v':=f_* v$ is future-directed timelike.  Let $\gamma : I\to M$ be a smooth timelike curve starting from $p\in M$, $p=\gamma(0)$, with timelike tangent $v\in T_pM$,  then by Eq.\  (\ref{cmtqb}), setting $\gamma'=f \circ \gamma$, we observe that by the equality $d_p=d_p' \circ f$,  $\gamma'$ has image contained in $I^+(p')$ thus Theorem \ref{cboo} can be applied to it as well as to $\gamma$
\[
-2L(p,v)=\frac{\dd^2}{\dd t^2} \frac{1}{2} d^2_p(\gamma(t))\vert_{t=0}=\frac{\dd^2}{\dd t^2} \frac{1}{2} d'{}^2_{p'}(\gamma'(t))\vert_{t=0}=-2L'(p',v'),
\]
which shows that $v'=f_*(v)$ is timelike.
By continuity, the just found identity holds also for $v$ causal.
\end{quote}
\end{proof}

\subsubsection{The positive signature Finsler case}
We give also the statements analogous to the above for a Finsler space but we omit the proof since  simpler. Here $F$ is defined by $L=\frac{1}{2} F^2$ over the whole $TM$.\\

\begin{theorem} \label{cnww}
Let $(M,L)$ be a Finsler space and let $N$ be a normal open neighborhood of $p$. Let $d_p: N \to \mathbb{R}$,
\[
d_p(q):= F(p, \exp^{-1}_p q).
\]
For every smooth curve $\gamma: [0,1]\to N$, $\gamma(0)=p$,  $\dot \gamma(0)=v\ne 0$, the functions $d_p(\gamma(t))$ and $d_p^2(\gamma(t))$ are respectively $C^1$  and $C^2$ on some interval $[0,\epsilon)$, $\epsilon>0$, and
\begin{align}\label{cmtq1a}
F(p,v)&=\frac{\dd}{\dd t}  d_p(\gamma(t))\vert_{t=0} =\lim_{t\to 0^+} \frac{1}{t} d_p(\gamma(t)) , \\
2L(p,v)&=\frac{1}{2}\frac{\dd^2}{\dd t^2}  d_p^2(\gamma(t))\vert_{t=0} = \frac{1}{2} \lim_{t\to 0^+}  \frac{1}{t} \frac{\dd}{\dd t}   d_p^2(\gamma(t))  . \label{cmtqa}
\end{align}
\end{theorem}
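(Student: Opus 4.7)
The strategy is to mirror the Lorentz-Finsler argument of Theorem \ref{gwpo} (which the final theorem is the positive-signature analogue of), exploiting the fact that in positive signature the cone restriction disappears: $F$ is defined and smooth everywhere on $TM\setminus 0$, the equality $d_p(q)=F(p,\exp_p^{-1}q)$ holds on the whole normal neighborhood $N$, and no distinction/causality hypothesis is needed on $\gamma$.

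The plan is to set $v_t:=\exp_p^{-1}\gamma(t)$. By Lemma \ref{kktr} the inverse exponential is $C^{1,1}$ near $p$ and smooth off $p$, so $v_t$ is a $C^{1,1}$ curve in $T_pM$ with $v_0=0$; moreover $(\exp_p^{-1})_*\vert_p=\mathrm{id}$, which yields $v_t/t\to v$ and $\dot v_t\to v$ as $t\to 0^+$, while $\ddot v_t$ remains bounded in a neighborhood of $0$. For the first formula, positive homogeneity of degree one of $F$ gives $d_p(\gamma(t))=F(p,v_t)=tF(p,v_t/t)$, and continuity of $F(p,\cdot)$ forces $F(p,v_t)/t\to F(p,v)$; for $t>0$ small, $v_t\neq 0$, so $d_p\circ\gamma$ is smooth there, and differentiating through the chain rule together with the degree-$0$ homogeneity of $F_y$ gives $\frac{\dd}{\dd t}F(p,v_t)=F_{y^i}(p,v_t/t)\dot v_t^i\to F_{y^i}(p,v)v^i=F(p,v)$ by Euler's identity. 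This both establishes the limit and, by the standard L'H\^opital-type real-analysis lemma used in Theorem \ref{gwpo}, promotes $d_p\circ\gamma$ to a $C^1$ function on $[0,\epsilon)$.

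For the second formula, I would use that in the normal neighborhood $N$ one has $d_p^2(q)=2L(p,\exp_p^{-1}q)$, so $d_p^2(\gamma(t))=2L(p,v_t)$. Differentiating twice in $t$ and invoking Euler's identities for the homogeneous function $L$ gives, for $t>0$ small,
\begin{equation*}
\frac{\dd^2}{\dd t^2}L(p,v_t)=g_{v_t}(\dot v_t,\dot v_t)+g_{v_t}(v_t,\ddot v_t).
\end{equation*}
The fundamental tensor $g$ is positive homogeneous of degree $0$ in $y$, so $g_{v_t}=g_{v_t/t}\to g_v$ by continuity and $g_{v_t}(\dot v_t,\dot v_t)\to g_v(v,v)=2L(p,v)=F^2(p,v)$; the second term equals $t\,g_{v_t/t}(v_t/t,\ddot v_t)$, which vanishes because $\ddot v_t$ stays bounded thanks to Lemma \ref{kktr}. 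The same L'H\^opital argument then upgrades this limit to the existence of $\tfrac{\dd^2}{\dd t^2}d_p^2(\gamma(t))\vert_{t=0}$ with value $4L(p,v)$, i.e.\ $\tfrac12\tfrac{\dd^2}{\dd t^2}d_p^2(\gamma(t))\vert_{t=0}=2L(p,v)$, and yields the $C^2$ regularity of $d_p^2\circ\gamma$ at $0$. The alternative expression $\tfrac12\lim_{t\to 0^+}\tfrac1t\tfrac{\dd}{\dd t}d_p^2(\gamma(t))$ follows by one more application of L'H\^opital once $C^2$ regularity is in hand.

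The only real obstacle is the regularity at $t=0$: the function $L$ is only guaranteed to be $C^2$ off the zero section, yet we need a $C^2$ statement for $L(p,v_t)$ at the instant $v_t=0$. As in the Lorentz-Finsler case, this is circumvented by the $C^{1,1}$ regularity of $\exp_p^{-1}$ at $p$ (giving bounded $\ddot v_t$) combined with the degree-$0$ homogeneity of $g$ (which lets us replace $v_t$ by the convergent direction $v_t/t$ inside $g$). Everything else is strictly easier than in the Lorentz-Finsler proof, since no causal/timelike restriction on $v$ or on the image of $\gamma$ is required.
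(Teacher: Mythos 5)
Your proposal is correct and follows exactly the route the paper intends: the paper omits the proof of Theorem \ref{cnww}, stating it is a simpler version of the Lorentz-Finsler argument of Theorem \ref{gwpo}, and your adaptation (setting $v_t=\exp_p^{-1}\gamma(t)$, using $(\exp_p^{-1})_*=\mathrm{id}$, degree-$0$ homogeneity of $g$ and $F_y$, boundedness of $\ddot v_t$ from the $C^{1,1}$ regularity of $\exp_p^{-1}$, and the L'H\^opital-type lemma) reproduces that argument faithfully, including the correct observation that in positive signature the absence of a null cone upgrades $d_p\circ\gamma$ to $C^1$ for every nonzero $v$.
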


We arrive at the following interesting result. It seems  to us that the proof in \cite[Prop.\ 11.3.3]{shen01} tacitly uses the additional assumption that $\exp$,  there denoted $\varphi^{-1}$, is $C^2$. It is known that if the exponential map is $C^2$   the Finsler space is Berwald \cite{akbarzadeh88}.\\

\begin{corollary}
Let $(M,L)$ be a Finsler space  such that $d_p^2$ is $C^2$ in a neighborhood of $p$ for every $p$, then $(M,L)$ is necessarily Riemannian.
\end{corollary}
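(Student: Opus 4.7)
The plan is to deduce the Riemannian character directly from Theorem \ref{cnww}: the $C^2$ regularity of $d_p^2$ forces $F^2(p,\cdot)$ to be an ordinary quadratic form on each tangent space, which is exactly the Riemannian condition.

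First, I would fix an arbitrary $p \in M$ and note that $d_p \geq 0$ with $d_p(p)=0$, so $d_p^2$ attains its minimum at $p$. Since by assumption $d_p^2$ is $C^2$ in a neighborhood of $p$, in particular $C^1$, its differential at $p$ vanishes. Hence for any smooth curve $\gamma:[0,1]\to M$ with $\gamma(0)=p$ and $\dot\gamma(0)=v$, the chain rule gives
\[
\frac{\dd^2}{\dd t^2}\,d_p^2(\gamma(t))\Big|_{t=0} = \mathrm{Hess}(d_p^2)_p(v,v),
\]
so the right-hand side depends on $\gamma$ only through $v$ and is a symmetric bilinear form in $v$. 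Setting $g_p(v,v):=\tfrac{1}{2}\mathrm{Hess}(d_p^2)_p(v,v)$ and combining with the Busemann-Mayer type formula \eqref{cmtqa} of Theorem \ref{cnww}, one obtains
\[
F^2(p,v) = g_p(v,v) \qquad \text{for every } v\in T_pM.
\]

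Hence $F^2(p,\cdot)$ is a quadratic form on $T_pM$; positivity of $F^2$ on nonzero vectors gives positive-definiteness of $g_p$. The Finsler fundamental tensor, defined as the vertical Hessian $\tfrac{1}{2}\partial^2 F^2/\partial y^i\partial y^j$, is then simply the constant matrix of $g_p$, independent of the direction $y$. Since $p$ was arbitrary, this yields a smooth Riemannian metric $g$ on $M$ with $L=\tfrac{1}{2}g(\cdot,\cdot)$, proving that $(M,L)$ is Riemannian. I do not anticipate any real obstacle: the only point worth double-checking is the vanishing of $d(d_p^2)|_p$, which follows from $d_p^2$ being $C^1$ at its minimum, a consequence of the assumed $C^2$ regularity (and avoids the pitfall that $d_p$ itself is in general not differentiable at $p$).
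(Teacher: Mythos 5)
Your proof is correct and follows essentially the same route as the paper: both reduce the claim to formula (\ref{cmtqa}) of Theorem \ref{cnww} together with the vanishing of $\dd (d_p^2)$ at $p$, which identifies $F^2(p,\cdot)$ with the quadratic form $\frac{\p^2 d_p^2}{\p x^\mu \p x^\nu}(p)v^\mu v^\nu$. The only cosmetic difference is that the paper deduces $\dd(d_p^2)\vert_p=0$ by multiplying (\ref{cmtq1a}) by $d_p(\gamma(t))$, whereas you use the equally valid observation that the $C^1$ function $d_p^2\ge 0$ attains its minimum at $p$.
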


\begin{proof}
Equation (\ref{cmtq1a}) multiplied by $d_p(\gamma(t))$ proves, by the arbitrariness of $\dot \gamma(0)$, that $d_p^2$ has vanishing differential at $p$. Thus Eq.\ (\ref{cmtqa})  can be rewritten with the $C^2$ assumption on  $d_p^2$, $4L(p,v)= \frac{\p^2 d_p^2}{\p x^\mu \p x^\nu}(p) v^\mu v^\nu$ which is quadratic.
\end{proof}

We include the following  known consequence of the Busemann-Mayer formula for completeness.\\

\begin{theorem}
Let $(M,L)$  and $(M',L')$ be  smooth Finsler spaces and let $f: M \to M'$ be a distance preserving bijection:
\[
d'(f(p),f(q))=d(p,q).
\]
 Then $M$ and $M'$  have the same dimension, $f$ is a (smooth) diffeomorphism  and for every $p\in M$, and vector $v\in T_pM$
 \[
 L(p,v)=L'(f(p),f_*(v)).
 \]
\end{theorem}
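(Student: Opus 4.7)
The plan is to mirror the Lorentz-Finsler proof given above, with considerable simplification since there is no cone structure to keep track of and the distance is finite and continuous throughout $M$. The strategy splits into two parts: first upgrade $f$ to a smooth diffeomorphism by building charts out of distance functions, then recover $L$ pointwise from $d$ via the Busemann-Mayer formula of Theorem \ref{cnww}.

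For smoothness, fix $p \in M$ and a normal neighborhood $N$ of $p$. I would choose a basis $t_1, \ldots, t_n$ of $T_pM$ (with $n = \dim M$) and set $q_i := \exp_p(s_i t_i)$ for small $s_i > 0$, so that $q_i \in N$. By the normal-neighborhood property, $d_{q_i}(r) = F(q_i, \exp_{q_i}^{-1} r)$ on a neighborhood of $p$ with $r \ne q_i$, which is smooth because $\exp_{q_i}^{-1}$ is, and whose differential at $p$ equals $g_{P_i}(P_i, \cdot)$ where $P_i$ is the tangent at $p$ of the geodesic from $q_i$ to $p$. Using that the Legendre transform $v \mapsto g_v(v, \cdot)$ is a diffeomorphism on the slit tangent space $T_pM \setminus 0$ (its Jacobian determinant equals $\det g_v \ne 0$), the initial vectors $t_i$ can be arranged so that the $n$ covectors $\{dd_{q_i}\vert_p\}$ are linearly independent in $T_p^*M$; then $\Phi := (d_{q_1}, \ldots, d_{q_n})$ is a smooth chart near $p$ by the inverse function theorem. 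Setting $q_i' := f(q_i)$ and $p' := f(p)$, the identity $d'_{q_i'} \circ f = d_{q_i}$ shows that the $d'_{q_i'}$ separate points near $p'$, forcing $\dim M' \ge n$; the symmetric argument applied to $f^{-1}$ yields $\dim M = \dim M'$, and the analogous chart $\Phi'$ at $p'$ satisfies $\Phi = \Phi' \circ f$, so $f$ is the identity in these charts and hence a smooth diffeomorphism.

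For the metric identity, pick $v \in T_pM$ and any smooth curve $\gamma : [0,\epsilon) \to M$ with $\gamma(0) = p$, $\dot\gamma(0) = v$, and set $\gamma' := f \circ \gamma$; then $\dot\gamma'(0) = f_* v$. Distance preservation gives $d_p(\gamma(t)) = d'_{p'}(\gamma'(t))$, and applying formula (\ref{cmtq1a}) at both $p$ and $p'$ yields
\[
F(p,v) = \lim_{t\to 0^+} \tfrac{1}{t}\, d_p(\gamma(t)) = \lim_{t\to 0^+} \tfrac{1}{t}\, d'_{p'}(\gamma'(t)) = F'(f(p), f_* v),
\]
whence $L(p,v) = \tfrac{1}{2} F(p,v)^2 = \tfrac{1}{2} F'(f(p), f_* v)^2 = L'(f(p), f_* v)$. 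The only delicate point is guaranteeing linear independence of the $\{dd_{q_i}\vert_p\}$, which is handled by the diffeomorphism property of the Legendre transform; otherwise the argument is strictly simpler than its Lorentz-Finsler counterpart.
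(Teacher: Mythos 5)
Your argument is correct and follows exactly the route the paper intends: the theorem is stated there without proof as a ``known consequence of the Busemann--Mayer formula,'' to be obtained by specializing the Lorentz--Finsler proof, and that is precisely what you do (distance-function charts whose differentials are controlled via the Legendre map, then formula (\ref{cmtq1a}) applied to $d_p(\gamma(t))=d'_{p'}(\gamma'(t))$ to recover $F$, hence $L$). One small slip: the fact that the $n$ functions $d'_{q_i'}$ separate points near $p'$ forces $\dim M'\le n$ (an injective continuous map into $\mathbb{R}^n$ bounds the dimension from \emph{above}, by invariance of domain), not $\dim M'\ge n$ as you wrote — but since you also invoke the symmetric argument for $f^{-1}$, the two inequalities still combine to give equality of dimensions.
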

Again, the proof is analogous to those given above for the Lorentz-Finsler case but simpler.

\section{Low regularity} \label{qqpo}
In this section we discuss to what extent the results of Section \ref{cnpda}, developed for smooth spacetimes $(M,g)$, generalize to less regular structures.

\subsection{Equivalence of distinction/reflectivity concepts with their $d$-version}
Proposition \ref{cnqxz} and its consequences, Corollaries \ref{moprs} and  \ref{mopq}, are formulated for Lorentzian manifolds but their proof applies in much greater generality.

\subsubsection{Proper Lorentz-Finsler spaces}
For proper Lorentz-Finsler spaces \cite{minguzzi17} we have the generalization\\

\begin{proposition} \label{clqx}
 Let $(M,C,\mathscr{F})$ be a locally Lipschitz proper Lorentz-Finsler space such that $\mathscr{F}(\p C)=0$. For any two events $p,q\in M$, the inclusion $I^+(q)\subset I^+(p)$ is equivalent to the property $d_q\le d_p$.
Dually, the   inclusion $I^-(p)\subset I^-(q)$ is equivalent to the property $d^p\le d^q$.\\
\end{proposition}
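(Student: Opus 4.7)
The plan is to recycle the proof of Proposition \ref{cnqxz} almost verbatim, since that argument rests only on (i) the characterization $I=\{d>0\}$ of the chronological relation, and (ii) the approximability of a timelike curve's length by the length of its restriction to a subinterval whose initial endpoint is slightly advanced from $q$. The first ingredient is precisely what the proof of Theorem \ref{ckkq} has already verified in the present setting: the inclusion $\{d>0\}\supset I$ uses $\mathscr{F}(\p C)=0$ together with \cite[Thm.\ 2.56]{minguzzi17}, while $I\supset\{d>0\}$ uses properness, i.e., that $\mathscr{F}$ is strictly positive on $\mathrm{Int}\,C_p$. We shall use this identification freely below.

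For the forward direction we assume $I^+(q)\subset I^+(p)$, fix $r\in M$, and note that the inequality $d_q(r)\le d_p(r)$ is trivial unless $r\in I^+(q)$. Given any $m$ with $0<m<d_q(r)$, one may choose a piecewise $C^1$ future-directed timelike curve $\gamma:[0,1]\to M$ from $q$ to $r$ of Finsler length exceeding $m$. Absolute continuity of the length integral along $\gamma$ yields $\delta>0$ so small that $\ell(\gamma|_{[\delta,1]})>m$; since $\gamma|_{[0,\delta]}$ is itself a timelike curve from $q$ to $\gamma(\delta)$, one gets $\gamma(\delta)\in I^+(q)\subset I^+(p)$. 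Concatenating any piecewise $C^1$ timelike curve from $p$ to $\gamma(\delta)$ with $\gamma|_{[\delta,1]}$ produces an admissible competitor in the supremum defining $d_p(r)$, of length at least $\ell(\gamma|_{[\delta,1]})>m$, whence $d_p(r)>m$. Letting $m\uparrow d_q(r)$ and varying $r$ gives $d_q\le d_p$.

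The converse is immediate: from $d_q\le d_p$ and $r\in I^+(q)$ we get $d_q(r)>0$, hence $d_p(r)>0$, hence $r\in I^+(p)$. The dual statement about past chronological sets follows by running the same argument with the time orientation reversed. The only step that might be regarded as an obstacle is the identification $I=\{d>0\}$, but as this has already been established within the proof of Theorem \ref{ckkq} from the hypotheses of properness and $\mathscr{F}(\p C)=0$, no genuine difficulty remains; the remainder of the argument is formal and uses no smoothness of $g$ or $\mathscr{F}$ beyond what is needed to make sense of the Finsler length functional on piecewise $C^1$ curves.
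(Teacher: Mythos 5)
Your proposal is correct and follows essentially the same route as the paper, which likewise observes that the smooth proof of Proposition \ref{cnqxz} passes unaltered once the identity $I=\{d>0\}$ (established in the proof of Theorem \ref{ckkq}) is in place. The only step you gloss over is the existence, for $r\in I^+(q)$ and $0<m<d_q(r)$, of a \emph{timelike} (not merely causal) curve from $q$ to $r$ of length exceeding $m$: since in this low-regularity setting $d$ is a supremum over causal curves, this is exactly where the paper again invokes \cite[Thm.\ 2.56]{minguzzi17} together with the hypothesis $\mathscr{F}(\p C)=0$.
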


\begin{proof}
The proof passes unaltered to this case due to the identity $I=\{d>0\}$ (proved in the proof of Thm.\ \ref{ckkq})   and from \cite[Thm.\ 2.56]{minguzzi17}  which is applied  in the proof when establishing the existence of the timelike curve $\gamma$.
\end{proof}

In the context of locally Lipschitz proper cone structures future distinction can be understood as the following property, see the proof of \cite[Thm.\ 2.18]{minguzzi17}, (antisymmetry of $D_f$) `$y\in \overline{I^+(x)}$ and $x\in \overline{I^+(y)}$ implies $x=y$', which is easily shown to be equivalent to the standard property $I^+(x)=I^+(y) \Rightarrow x=y$.

This implies, via the same proofs as in the smooth case,\\

\begin{corollary}
 Let $(M,C,\mathscr{F})$ be a locally Lipschitz proper Lorentz-Finsler space such that $\mathscr{F}(\p C)=0$. Then (fu\-ture/past/weak) $d$-distinction is equivalent to  (resp.\ fu\-ture/past /weak)  distinction. Similarly,  (future/past) $d$-reflectivity is equivalent to  (resp.\ future/past)  reflectivity.\\
\end{corollary}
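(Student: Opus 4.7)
The plan is to deduce the corollary as a direct application of the immediately preceding proposition, exactly in the same way as Corollaries \ref{moprs} and \ref{mopq} follow from Prop.\ \ref{cnqxz} in the smooth setting. The entire content of the statement reduces to rewriting equalities/inclusions of chronological futures and pasts as equalities/inequalities among the one-point distance functions $d_p$ and $d^p$, and this translation is provided verbatim by the proposition above.

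For the distinction part, I would first observe that $d_p=d_q$ is equivalent to the conjunction $d_p\le d_q$ and $d_q\le d_p$, which by the proposition is equivalent to $I^+(q)\subset I^+(p)$ and $I^+(p)\subset I^+(q)$, i.e.\ to $I^+(p)=I^+(q)$. Hence future $d$-distinction, $d_p=d_q\Rightarrow p=q$, is literally the same statement as future distinction $I^+(p)=I^+(q)\Rightarrow p=q$. The past case is the dual argument using the second half of the proposition. Weak $d$-distinction is then obtained by combining the two: $d_p=d_q$ and $d^p=d^q$ is equivalent to $I^+(p)=I^+(q)$ and $I^-(p)=I^-(q)$, which is weak distinction; and full $d$-distinction (the `or' version) reduces to the standard `or' version of distinction in the same manner.

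For the reflectivity part, future $d$-reflectivity reads $d^p\le d^q\Rightarrow d_q\le d_p$. By the two equivalences in the proposition this is exactly $I^-(p)\subset I^-(q)\Rightarrow I^+(p)\supset I^+(q)$, which is the definition of future reflectivity; the past case is analogous, and $d$-reflectivity then follows by combining both directions.

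There is essentially no obstacle beyond making sure the proposition applies, and indeed all its hypotheses (local Lipschitzness, properness of the Lorentz-Finsler space, and $\mathscr{F}(\partial C)=0$) are already built into the statement of the corollary; in particular the identity $I=\{d>0\}$ and the existence of the auxiliary timelike curve used in the proof of the proposition are available. Hence the corollary follows by the same formal manipulations as in the smooth case, with no additional regularity argument needed.
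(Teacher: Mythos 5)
Your proposal is correct and matches the paper exactly: the paper proves this corollary by simply noting that it follows from the preceding Lorentz-Finsler version of Prop.~\ref{cnqxz} ``via the same proofs as in the smooth case,'' i.e.\ by the same translation of inclusions of chronological futures/pasts into inequalities of $d_p$, $d^p$ that yields Corollaries~\ref{moprs} and~\ref{mopq}. Nothing further is needed.
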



\begin{remark}
Similarly, an inspection shows that, for the structure  $(M,C,\mathscr{F})$ of a locally Lipschitz proper Lorentz-Finsler space such that $\mathscr{F}(\p C)=0$, we have, with the same proofs, the analogs of  Prop.\ \ref{nnrt},  Prop.\ \ref{bpmf}, Thm.\ \ref{clpqo}, Cor.\  \ref{coer}, Thm.\ \ref{cjpr}, Cor.\ \ref{cfpr}, Thm.\ \ref{mxdf} and Thm.\ \ref{coox}.\\
\end{remark}

\subsubsection{Lorentzian length spaces}

Lorentzian metric spaces can be endowed with a natural causal relation though which  isocausal curves can be defined \cite{minguzzi22,minguzzi24b}. By assuming that every two chronologically related points are connected by a (maximal) isocausal curve the {\em Lorentzian (resp.\ length) prelength spaces} are obtained \cite[Def.\ 5.13]{minguzzi22} \cite[Def.\ 5.1]{minguzzi24b}.\\

\begin{proposition} \label{ckqp}
 Let $(X,d)$ be a Lorentzian length space. For any two events $p,q\in X$, the inclusion $I^+(q)\subset I^+(p)$ is equivalent to the property $d_q\le d_p$.
Dually, the   inclusion $I^-(p)\subset I^-(q)$ is equivalent to the property $d^p\le d^q$.\\
\end{proposition}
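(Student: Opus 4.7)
The plan is to adapt the proof of Proposition \ref{cnqxz} by replacing the smooth timelike curve used there with a maximal timelike isocausal curve, whose existence between chronologically related events is built into the definition of Lorentzian length space. By duality of the two equivalences it suffices to establish the first. The direction $d_q\le d_p\Rightarrow I^+(q)\subset I^+(p)$ is immediate from the identity $I=\{d>0\}$ inherent in the Lorentzian metric space definition (Def.\ \ref{mxpq}): if $r\in I^+(q)$ then $d_q(r)>0$, hence $d_p(r)>0$, hence $r\in I^+(p)$.

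For the nontrivial direction assume $I^+(q)\subset I^+(p)$ and fix $r\in X$. We must prove $d_q(r)\le d_p(r)$, and only the case $q\ll r$ is not automatic. Pick an arbitrary $0<m<d_q(r)$. By the length-space hypothesis there exists a maximal timelike isocausal curve $\gamma\colon[0,1]\to X$ joining $q$ to $r$ with $L(\gamma)=d(q,r)$. By the definition of timelike curve in this framework one has $\gamma(s)\ll\gamma(t)$ for all $0\le s<t\le 1$; in particular $q\ll\gamma(\delta)$ for every $\delta\in(0,1]$. From the additivity of length, the bound $L(\gamma\vert_{[\delta,1]})\le d(\gamma(\delta),r)$, and the reverse triangle inequality along the chronological triple $q\ll\gamma(\delta)\ll r$, all three inequalities saturate, giving $d(\gamma(\delta),r)=L(\gamma\vert_{[\delta,1]})$. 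I then choose $\delta>0$ small enough so that $L(\gamma\vert_{[\delta,1]})>m$, which is possible by the continuity of the partial length functional in $\delta$.

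Set $s:=\gamma(\delta)$. Since $s\in I^+(q)\subset I^+(p)$ we get $p\ll s$, and the reverse triangle inequality on the chronological triple $p\ll s\ll r$ yields
\[
d_p(r)\ \ge\ d(p,s)+d(s,r)\ \ge\ d(s,r)\ >\ m .
\]
Letting $m\nearrow d_q(r)$ gives $d_p(r)\ge d_q(r)$, and since $r$ was arbitrary, $d_q\le d_p$ as desired.

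The step I expect to require the most care is the extraction of $s=\gamma(\delta)$ satisfying simultaneously $q\ll s$ and $d(s,r)>m$; concretely, both the continuity of $\delta\mapsto L(\gamma\vert_{[\delta,1]})$ and the saturation $d(\gamma(\delta),r)=L(\gamma\vert_{[\delta,1]})$ along a maximizer must be pulled from the Lorentzian length space toolkit of \cite{minguzzi22,minguzzi24b}. If the sharper notion of \emph{timelike} maximizer is not directly available, so that only a maximal isocausal curve is guaranteed a priori, the same conclusion can be reached by approximating $d(q,r)$ by sums $\sum_i d(q_{i-1},q_i)$ along a chronological chain $q=q_0\ll q_1\ll\cdots\ll q_n=r$ with total sum exceeding $m$, and taking $s=q_1$: then $q\ll s$ by construction and $d(s,r)>m-d(q,s)$ follows from iterated reverse triangle inequality upon choosing $d(q,q_1)$ small enough.
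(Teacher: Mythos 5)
Your easy direction and your overall strategy (adapting Prop.~\ref{cnqxz} by routing through the maximizer) match the paper, but your primary argument rests on a premise that is not available: in a Lorentzian \emph{length} space the guaranteed connecting curve between $q\ll r$ is a maximal \emph{isocausal} curve, not a timelike one, and it may contain pairs of points at null distance (the paper's proof says this explicitly, and the remark preceding it warns that the smooth-case step does not carry over). So the claim ``$\gamma(s)\ll\gamma(t)$ for all $s<t$, in particular $q\ll\gamma(\delta)$ for every $\delta$'' is unjustified, and with it the extraction of a point $s$ satisfying $q\ll s$ collapses. This is not a cosmetic issue: getting an intermediate point that is simultaneously in $I^+(q)$ and at distance $>m$ (or $\ge m$) from $r$ is the entire content of the hard direction.

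Your fallback is the right idea but leaves precisely this crux unproved: you posit a chronological chain $q=q_0\ll q_1\ll\cdots\ll q_n=r$ with $\sum_i d(q_{i-1},q_i)>m$ and $d(q,q_1)$ small, yet the existence of \emph{any} point $q_1$ with $q\ll q_1\ll r$ and $d(q_1,r)$ close to $d(q,r)$ is what needs an argument. The paper supplies it as follows: the function $s\mapsto d(\gamma(s),r)$ is continuous (both $\gamma$ and $d$ are continuous in the Lorentzian metric space topology), equals $d(q,r)>m$ at $s=0$, so by the intermediate value theorem there is $s$ with $d(\gamma(s),r)=m$ \emph{exactly}; maximality then gives $d(q,\gamma(s))=d(q,r)-m>0$, i.e.\ $q\ll\gamma(s)$, whence $p\ll\gamma(s)$ and $d(p,r)\ge d(p,\gamma(s))+d(\gamma(s),r)\ge m$. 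Note how choosing $d(\gamma(s),r)$ equal to $m$ rather than merely $>m$ is what buys the chronological relation $q\ll\gamma(s)$ without any timelikeness assumption on $\gamma$. If you replace your $\delta$-selection by this IVT-plus-additivity step, your write-up becomes the paper's proof; as it stands, both the main route and the sketch of the fallback have a gap at the same decisive point.
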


Note that the point $\gamma(s)$ is close to $q$ in the smooth version of the proof. Here we do not have this result but the proof works anyway. Under global hyperbolicity, due to Thm.\ \ref{ckkq}, Proposition \ref{clqx} would be a consequence of this one.

\begin{proof}
Suppose that  $I^+(q)\subset I^+(p)$  and let $r \in X$. If $r\notin I^+(q)$ then $d_q(r)=0$ and the inequality we wish to prove follows. If $r\in I^+(q)$ there is a maximal isocausal curve $\gamma:[0,1]\to X$ connecting $q=\gamma(0)$ to $r=\gamma(1)$ (note that it can have a pair of points at zero distance).   Since both $\gamma$ and $d$ are continuous in the topology of the Lorentzian metric space $X$, for any $0<m<d(q,r)$ we have $d(\gamma(s), r)=m$ for some  $s$. By maximality of the isocausal curve we have $d(q,\gamma(s))=d(q,r)-d(\gamma(s), r)=d(q,r)-m>0$. Thus  $\gamma(s)\gg q$ hence $\gamma(s)\gg p$, which implies
\[
d(p,r)\ge d(p,\gamma(s))+d(\gamma(s),r) \ge d(\gamma(s),r) =m.
\]
 Since $m<d_q(r)$ is arbitrary, $d_p(r)\ge d_q(r)$. By the arbitrariness of $r$,  $d_q\le d_p$.

For the converse, suppose that  $d_q\le d_p$, then if $r\in I^+(q)$ we have $d_q(r)>0$ so $d_p(r)>0$ which implies $r\in I^+(p)$. By the arbitrariness of $r$, $I^+(q)\subset I^+(p)$.
\end{proof}
This implies, via the same proofs as in the smooth case,\\

\begin{corollary} \label{cjnrx}
 Let $(X,d)$ be a Lorentzian length space. Then (fu\-ture/past/weak) $d$-distinction is equivalent to  (resp.\ fu\-ture/past /weak)  distinction. Similarly,  (future/past) $d$-reflectivity is equivalent to  (resp.\ future/past)  reflectivity.\\
\end{corollary}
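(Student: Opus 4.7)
The plan is to deduce Corollary \ref{cjnrx} directly from Proposition \ref{ckqp}, following the same formal pattern used in Corollaries \ref{moprs} and \ref{mopq} for the smooth case. Since Prop.\ \ref{ckqp} already establishes, in the Lorentzian length space setting, the pointwise equivalences between set-theoretic inclusions of chronological futures/pasts and inequalities of the distance slices $d_p$, $d^p$, the remainder of the argument is a purely formal translation.

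First I would combine the two inclusion equivalences of Prop.\ \ref{ckqp}, namely $I^+(q)\subset I^+(p)\Leftrightarrow d_q\le d_p$ and $I^-(p)\subset I^-(q)\Leftrightarrow d^p\le d^q$, to obtain by antisymmetry of $\le$ the equality versions
\[
I^+(p)=I^+(q) \;\Longleftrightarrow\; d_p=d_q, \qquad I^-(p)=I^-(q) \;\Longleftrightarrow\; d^p=d^q.
\]
From these, the equivalence of future (resp.\ past) $d$-distinction with classical future (resp.\ past) distinction is immediate, and the weak $d$-distinction/weak distinction equivalence follows by simply conjoining both implications.

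For reflectivity, I would rewrite the defining implication $I^-(p)\subset I^-(q)\Rightarrow I^+(p)\supset I^+(q)$ of future reflectivity via Prop.\ \ref{ckqp} as $d^p\le d^q\Rightarrow d_q\le d_p$, which is precisely future $d$-reflectivity; the past case is dual, and combining them gives the two-sided statement.

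The main conceptual work, namely extracting timelike sub-arcs from isocausal curves to mimic the convex-neighborhood argument of the smooth Prop.\ \ref{cnqxz}, has already been absorbed into the proof of Prop.\ \ref{ckqp} (through the maximal isocausal curve assumption defining a Lorentzian length space). Hence no genuine obstacle remains at the level of the corollary; the only care required is the bookkeeping check that the chosen definitions of future/past/weak distinction and future/past reflectivity translate cleanly under the two pointwise equivalences, which is routine.
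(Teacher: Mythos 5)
Your proposal is correct and follows exactly the paper's route: the paper derives Corollary \ref{cjnrx} as an immediate consequence of Prop.\ \ref{ckqp} ``via the same proofs as in the smooth case,'' i.e.\ the same formal translation of inclusions $I^{\pm}$ into inequalities of $d_p$, $d^p$ that yields Corollaries \ref{moprs} and \ref{mopq} from Prop.\ \ref{cnqxz}. Your bookkeeping of the definitions of (future/past/weak) distinction and (future/past) reflectivity under these equivalences is accurate, so nothing is missing.
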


On every Lorentzian metric space we have a length functional $L:\sigma\to L(\sigma)$, cf.\  \cite[Def.\ 6.1]{minguzzi22} \cite[Def.\ 5.12]{minguzzi24b}.
A different proof of Prop.\ \ref{ckqp}, more similar to that given for the smooth case, could have been obtained using the following result.\\

\begin{proposition}
Let $(X,d)$ be a Lorentzian metric space, and let $\sigma:[0,1] \to X$ be an isocausal curve connecting $x$ and $y$, $x\ll y$. Then the function $L(\sigma(s))$ is continuous.\\
\end{proposition}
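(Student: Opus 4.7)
The plan is to combine the additivity of the Lorentzian length functional along $\sigma$ with the comparison bound $L(\sigma|_{[s,t]}) \le d(\sigma(s),\sigma(t))$ coming from the trivial partition; continuity of $s\mapsto L(\sigma|_{[0,s]})$ then follows from joint continuity of $d$ and continuity of $\sigma$ in the Lorentzian metric space topology, once one knows that $d(x,x)=0$.

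First I would recall from \cite[Def.\ 6.1]{minguzzi22} \cite[Def.\ 5.12]{minguzzi24b} that for an isocausal curve $\sigma$ the length $L(\sigma|_{[a,b]})$ is the infimum over partitions $\pi\colon a=t_0<\cdots<t_n=b$ of the sums $\sum_{i=1}^n d(\sigma(t_{i-1}),\sigma(t_i))$. The reverse triangle inequality guarantees that refinements can only decrease these sums, so the infimum is realized as a limit along refinements and the resulting functional is additive on consecutive segments:
\[
L(\sigma|_{[0,t]}) = L(\sigma|_{[0,s]}) + L(\sigma|_{[s,t]}), \qquad 0\le s\le t\le 1.
\]
The trivial partition $\{s,t\}$ of $[s,t]$ then yields $L(\sigma|_{[s,t]})\le d(\sigma(s),\sigma(t))$.

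Second, additivity makes $s\mapsto L(\sigma|_{[0,s]})$ monotone non-decreasing. For right-continuity at any $s_0\in[0,1)$, as $t\to s_0^{+}$,
\[
0\le L(\sigma|_{[0,t]}) - L(\sigma|_{[0,s_0]}) = L(\sigma|_{[s_0,t]}) \le d(\sigma(s_0),\sigma(t)) \longrightarrow d(\sigma(s_0),\sigma(s_0)),
\]
by continuity of $\sigma$ (built into the notion of isocausal curve) and the joint continuity of $d$ on $X\times X$ built into the Lorentzian metric space definition; a symmetric argument handles left-continuity. The remaining ingredient, $d(x,x)=0$ for all $x\in X$, follows from the Lorentzian metric space axioms: otherwise $x\ll x$ by $I=\{d>0\}$, and the reverse triangle inequality applied to the chronological triple $x\ll x\ll x$ would force $d(x,x)\ge 2\,d(x,x)$, i.e.\ $d(x,x)\le 0$ by the finiteness of $d$, a contradiction.

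The main obstacle I anticipate is confirming the additivity of $L$ directly from the chosen convention of the length definition in the Lorentzian metric space literature, and in particular handling partition segments whose consecutive images under $\sigma$ are only causally (not chronologically) related: for such a segment both $d(\sigma(t_{i-1}),\sigma(t_i))$ and every sub-partition sum vanish, so the inf-based length distributes cleanly over the chronological and non-chronological portions of $\sigma$ and the bound $L\le d$ and the additivity both persist, making the continuity argument go through unchanged.
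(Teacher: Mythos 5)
Your proposal is correct and follows essentially the same route as the paper: the paper's proof likewise rests on the inequality $0\le L(\sigma(s'))-L(\sigma(s))\le d(\sigma(s),\sigma(s'))$ (which is exactly your additivity plus trivial-partition bound) and then concludes by continuity of $d$ and $\sigma$. Your extra verifications — additivity of the infimum-over-partitions length, the persistence of the reverse triangle inequality along merely causally related segments, and $d(x,x)=0$ — are details the paper absorbs into ``by definition of Lorentzian length,'' and they check out.
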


\begin{proof}
By definition of Lorentzian length, $0\le L(\sigma(s'))-L(\sigma(s))\le d(\sigma(s),\sigma(s'))$ for $s'\ge s$, thus $\vert L(\sigma(s'))-L(\sigma(s))\vert\le d(\sigma(s),\sigma(s'))+d(\sigma(s'),\sigma(s))$. As both $d$ and $\sigma$ are continuous, fixing $s$ and taking $s'\to s$ shows that $L(\sigma(s))$ is continuous.
\end{proof}

\subsection{Lorentzian distance in abstract setting}
In this section, unless otherwise stated, $(X,d)$ is just a set endowed with a function  $d: X\times X \to [0,+\infty]$ which satisfies the reverse triangle inequality on chronologically related triples (the chronological relation being $I:=\{d>0\}$).\\

\begin{proposition}\label{upcont-refl}
    Let $(X,d)$ be a pair as above endowed with a topology $T$. Suppose that in such topology
      $p\in \overline{I^{\pm}(p)}$ for every $p\in X$  and the functions $d_r$, $r\in X$, are  upper semi-continuous, then  the following property holds
      \begin{equation}
      I^+(p)\supset I^+(q) \Rightarrow d^p \le d^q,
      \end{equation}
      which implies both  past reflectivity and past $d$-reflectivity.
      Similarly, the upper semi-continuity of the functions $d^r$, $r\in X$,  implies  $I^-(p)\subset I^{-}(q)\Rightarrow d_q\le d_p$, and hence both future reflectivity and future $d$-reflectivity. \\
\end{proposition}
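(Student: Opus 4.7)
The plan is to adapt the strategy of Proposition \ref{nnrt} to this abstract setting: I will establish the quantitative implication $I^+(q)\subset I^+(p)\Rightarrow d^p\le d^q$ and extract both past reflectivity and past $d$-reflectivity as trivial corollaries. Because the topology $T$ is not assumed to be first countable, the argument should be phrased through open neighborhoods rather than sequences, which is accomplished by inserting an auxiliary constant $c$ separating the two distances.

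The core step is a proof by contradiction. Assume there is $r\in X$ with $d(r,p)>d(r,q)$, so in particular $r\ll p$, and pick $c$ with $d(r,q)<c<d(r,p)$. Upper semi-continuity of $d_r$ at $q$ furnishes a $T$-open neighborhood $O\ni q$ on which $d_r<c$; the hypothesis $q\in\overline{I^+(q)}$ then provides some $q'\in O$ with $q\ll q'$, and the assumption $I^+(q)\subset I^+(p)$ gives $p\ll q'$. The reverse triangle inequality applied to the chronologically related chain $r\ll p\ll q'$ produces
\[
d(r,q')\ge d(r,p)+d(p,q')\ge d(r,p)>c,
\]
contradicting $d_r(q')<c$. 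Hence no such $r$ exists and $d^p\le d^q$.

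From the implication just established, past reflectivity is immediate because $I^-(x)=\{d^x>0\}$, so $d^p\le d^q$ forces $I^-(p)\subset I^-(q)$; past $d$-reflectivity follows because $d_q\le d_p$ tautologically yields $I^+(q)=\{d_q>0\}\subset\{d_p>0\}=I^+(p)$, to which the quantitative implication then applies. The dual statement is obtained by interchanging past and future throughout, invoking instead $p\in\overline{I^-(p)}$ and upper semi-continuity of the functions $d^r$. I do not foresee a genuine obstacle: the only subtlety is the absence of first countability of $T$, which is why one substitutes the $c$-neighborhood device for a convergent sequence, and one must check that the reverse triangle inequality is applied only to chronologically related triples, which here is automatic because both $r\ll p$ and $p\ll q'$ are secured before the inequality is used.
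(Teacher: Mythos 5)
Your proposal is correct and follows essentially the same route as the paper's proof: upper semi-continuity of $d_r$ at $q$ yields a $T$-neighborhood on which $d_r$ is controlled, the condition $q\in\overline{I^+(q)}$ supplies a point $q'\in O\cap I^+(q)$, the hypothesis $I^+(q)\subset I^+(p)$ gives $p\ll q'$, and the reverse triangle inequality on $r\ll p\ll q'$ closes the argument. The only cosmetic difference is that the paper argues directly with an arbitrary $m>d(r,q)$ instead of your contradiction with an intermediate constant $c$; the derivation of past reflectivity and past $d$-reflectivity from the quantitative implication is also the same.
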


The statement on the fact that the found implication is stronger than past reflectivity and past $d$-reflectivity follows from the trivial implications $d_q\le d_p\Rightarrow I^+(p)\supset I^+(q)$ and $d^p \le d^q\Rightarrow I^-(p)\subset I^{-}(q)$.

We call the property $I^+(p)\supset I^+(q) \Rightarrow d^p \le d^q$ {\em strong past reflectivity}, and the property $I^-(p)\subset I^{-}(q)\Rightarrow d_q\le d_p$ {\em strong future reflectivity}. Due to Prop.\ \ref{ckqp} the reflectivity/$d$-reflectivity/strong reflectivity properties, declined also in the past/future versions, are  all equivalent for Lorentzian length spaces, see also  Cor.\ \ref{cjnrx}.

\begin{proof}
 Assume $I^+(p)\supset I^+(q)$ and let $r\in X$. If $d^p(r)=0$ there is nothing to prove, thus suppose $r\ll p$.
 By the upper $T$-semi-continuity of the function $d_r$ for any $m>d(r,q)$ there is a $T$-open set $O\ni q$, such that $d(r,q')<m$ for every $q'\in O$. Let us choose $q'\in I^+(q)\cap O$. Thus $d_q(q')>0$,  which implies by the assumed inclusion  $d_p(q')>0$. By the reverse triangle inequality
 \[
 m>d(r,q')\ge d(r,p)+d(p,q')\ge d(r,p)=d^p(r).
 \]
 By the arbitrariness of $m$, $d^p(r)\le d^q(r)$, which concludes the proof.
\end{proof}

So, reflectivity/$d$-reflectivity follows from upper semi-continuity in any topology in which every point has points in the chronological past and future  in any neighborhood, it does not have to be the manifold topology. In particular, all Lorentzian metric spaces with this property are ($d$-)reflective. As a special case, all Lorentzian prelength spaces, with any pair of chronologically ordered points being connected by an isochronal line, are $d$-reflective.

The proof of the next result coincides with that for the smooth case, cf.\ Thm.\ \ref{clpqo}.\\

\begin{theorem}\label{upcoce}
    Let $(X,d)$ be a set endowed with a function  $d: X\times X \to [0,+\infty]$ which satisfies the reverse triangle inequality on chronologically related triples. Let it  be  endowed with a topology $T$. Suppose that in such topology
      $p\in \overline{I^{\pm}(p)}$ for every $p\in X$ and $d$ is lower $T$-semi-continuous.  The function $d$ is $T\times T$-continuous if and only if the functions $d_p$ and $d^p$ are $T$-continuous for every $p\in X$.\\
\end{theorem}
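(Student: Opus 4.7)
The plan is to transport the proof of Theorem \ref{clpqo} almost verbatim to the abstract setting, since every ingredient used there has been arranged as a hypothesis here: density of the chronological cones at every point, lower semi-continuity of $d$ on the product, the reverse triangle inequality on chronological chains, and (for the reverse implication) $T$-continuity of the one-variable slices.

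The forward direction is immediate: $d_p=d\circ\phi_p$ and $d^p=d\circ\phi^p$, with $\phi_p\colon q\mapsto(p,q)$ and $\phi^p\colon q\mapsto(q,p)$ continuous from $(X,T)$ to $(X\times X,T\times T)$, so $T\times T$-continuity of $d$ transfers to $T$-continuity of every slice.

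For the converse, since $d$ is already lower $T\times T$-semi-continuous by assumption, I need only establish upper $T\times T$-semi-continuity at an arbitrary $(p,q)$. The case $d(p,q)=\infty$ is trivial, so fix $d(p,q)<\infty$ and $\epsilon>0$. Using $p\in\overline{I^-(p)}$ together with upper $T$-semi-continuity of $d^q$ at $p$, I pick $p'\ll p$ with $d(p',q)<d(p,q)+\epsilon/2$; then, using $q\in\overline{I^+(q)}$ together with upper $T$-semi-continuity of $d_{p'}$ at $q$, I pick $q'\gg q$ with $d(p',q')<d(p',q)+\epsilon/2<d(p,q)+\epsilon$. Lower semi-continuity of the slices $d_{p'}$ and $d^{q'}$ forces $I^+(p')=d_{p'}^{-1}((0,+\infty])$ and $I^-(q')=(d^{q'})^{-1}((0,+\infty])$ to be $T$-open, so $I^+(p')\times I^-(q')$ is a $T\times T$-open neighborhood of $(p,q)$. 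For any $(\tilde p,\tilde q)$ in this neighborhood with $\tilde p\ll\tilde q$, one has the chain $p'\ll\tilde p\ll\tilde q\ll q'$; two applications of the reverse triangle inequality yield $d(\tilde p,\tilde q)\leq d(p',q')<d(p,q)+\epsilon$. For pairs $(\tilde p,\tilde q)$ in the neighborhood that are not chronologically related, the defining identity $I=\{d>0\}$ gives $d(\tilde p,\tilde q)=0$ and the bound is again trivial. This supplies upper $T\times T$-semi-continuity at $(p,q)$.

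I do not foresee a genuine obstacle, as every step of the smooth argument uses only features now postulated abstractly. The one point worth flagging is that the reverse triangle inequality must be applied to a chain of three strict inequalities, which is valid because the hypothesis supplies it precisely for chronologically related triples, and the chain $p'\ll\tilde p\ll\tilde q\ll q'$ provides two such triples that combine to yield the desired bound.
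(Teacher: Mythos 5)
Your proposal is correct and follows essentially the same route as the paper, which simply states that the proof of Theorem \ref{upcoce} coincides with that of Theorem \ref{clpqo} (choose $p'\ll p$ via upper semi-continuity of $d^q$ and the density hypothesis $p\in\overline{I^-(p)}$, then $q'\gg q$ via upper semi-continuity of $d_{p'}$, and bound $d$ on the $T\times T$-open set $I^+(p')\times I^-(q')$). Your explicit case split on whether $(\tilde p,\tilde q)$ is chronologically related, with the two applications of the reverse triangle inequality, is exactly the detail the paper leaves implicit.
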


The assumption that $d$ is lower $T$-semi-continuous in the first sentence can be dropped for a Lorentzian length space. Indeed, the last statement implies that $T$ contains the Alexandrov topology, and from there and the existence of a maximal curve (not necessarily isochronal) the  lower $T$-semi-continuity follows.

The idea behind the following proof is it show how reflectivity works  without using the manifold assumption.\\

\begin{proposition}\label{refl-cont}
Let $(X,d)$ be a set endowed with a function  $d: X\times X \to [0,+\infty]$ which satisfies the reverse triangle inequality on chronologically related triples.    Assume that weak $d$-distinction hold for $(X,d)$.  Suppose that for every $r\in X$ there are $x,y\in X$ such that $r\in I(x,y)$. Let $T$ be a topology of $X$, making $d_p$, $d^p$ $T$-continuous for each $p\in X$. Assume that for every $p,q\in X$ the diamond $I(p,q)$ is $T$-relatively compact.\footnote{These assumptions would be the same of a Lorentzian metric space if we were assuming that  $d$  is $T$-continuous and finite.}
   Then $T$ is the initial topology of the family $\{d_p, d^p, p \in X\}$.
   Moreover, if $d$-reflectivity holds any topology $T'$ such that $d_p$, $d^p$, for $p\in X$, are lower $T'$-semi-continuous contains $T$ (so $d_p$, $d^p$ are actually $T'$-continuous).
\end{proposition}

\begin{proof}
The first claim is proved in \cite{minguzzi22} using Prop.\ \ref{jept} (note that the initial topology of the family $\{d_p, d^p, p \in X\}$  is Hausdorff by weak $d$-distinction  and it contains the Alexandrov topology. Its open set are the elements of $\mathcal{A}$ in that result).

For the second statement, we are preliminarly going to show that  $d_p$, $d^p$ are $T'$-continuous for every $p$.
Assume that $d_p$ is not upper $T'$-semi-continuous at some point $q$.
Take $x,y\in X$ so that $x\ll q \ll y$. Then,
by the lower $T'$-semi-continuity of $d_x$ and $d^y$, $I^+(x)$ and $I^-(y)$ are $T'$-open and hence    $I(x,y)$ is $T'$-open.

Let $\Lambda$ be the family of $T'$-neighborhoods of $q$ contained in  $I(x,y)$ (it is non-empty as $I(x,y)$ belongs to it, further any neighborhood can be intersected with $I(x,y)$ to give another neighborhood, thus $\Lambda$ generates the filter of neighborhoods for $q$).

There is some $\epsilon>0$ such that for every  $O\in \Lambda$ there exists some $q'(O)\in O\subset I(x,y)$ such that  $d_p(q'(O))-d_p(q)>\epsilon$.
The family $\Lambda$ is a directed set by the reverse inclusion,
thus $q' :\Lambda \to I(x,y)$ is a net (we shall also write $q'_\lambda$ instead of $q'(O)$). Note that by construction, $q'_\lambda \to_{T'} q$.
By the upper $T$-semi-continuity of $d$ we can find an $T$-open set $U\ni q$ such that on it $d_p(r)-d_p(q)<\epsilon$ for every $r\in U$, thus for every $O\in \Lambda$, $q'(O)\notin U$. We conclude that  $q' :\Lambda \to \textrm{Cl}_{T}(I(x,y))\backslash U$ is a net with image in a $T$-compact set, thus there is a cluster point $\tilde q$ and hence a subnet converging to $\tilde q \in \textrm{Cl}_{T}(I(x,y))\backslash U$, $q'_\lambda \to_{T} \tilde q$. In particular, as $\tilde q\notin U$, $\tilde q\ne q$.




    Now, consider any $z\in X$. By lower $T'$-semi-continuity of $d_z$ and $T$-continuity of $d_z$  we have
    \[
    d(z,q)\leq \lim_{\lambda}d(z,q'_\lambda)=d(z,\tilde q),
    \]
    so $d^q\leq d^{\tilde q}$, and thus, by $d$-reflectivity $d_q \geq d_{\tilde q}$. Moreover, again by lower $T'$-semi-continuity of $d^u$ and $T$-continuity of $d^u$, for every $u\in X$,
    \[
    d(q,u)\leq \lim_{\lambda} d(q'_\lambda,u)=d(\tilde q,u),
    \]
    so $d_q\leq d_{\tilde q}$, and thus
$d^q\geq d^{\tilde q}$. We conclude that $d_q=d_{\tilde q}$ and $d^q=d^{\tilde q}$, therefore $q=\tilde q$ by weak distinction, a contradiction which shows that $d_p$ is $T'$-continuous. Similarly, $d^p$ is $T'$-continuous. This means that $T'$ is finer than the initial topology of these functions which coincides with $T$.
\end{proof}

So, reflectivity is equivalent to upper semi-continuity (with lower semi-con\-ti\-nuity assumed) in a very general setting.

We recall that we consider only Lorentzian metric spaces without chronological boundary, so that every point is contained in some chronological diamond.\\

\begin{corollary}\label{refl-cont-lms}
    Let $(X,d)$ be a $d$-reflective  Lorentzian metric space, and let $T$ be any topology with respect to which  $d_p$, $d^p$ are lower $T$-semi-continuous. Then $d_p$, $d^p$ are $T$-continuous for each $p\in X$.\\
\end{corollary}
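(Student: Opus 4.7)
The plan is to apply the second statement of Proposition \ref{refl-cont} with the roles of the two topologies specified as follows: the topology ``$T$'' appearing in Proposition \ref{refl-cont} will be taken to be the canonical Lorentzian metric space topology, which I shall denote $T_0$, while the role of ``$T'$'' there will be played by the topology $T$ in the statement of the corollary.

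First, I would verify that $(X,d)$ together with $T_0$ meets the hypotheses of Proposition \ref{refl-cont}. The weak $d$-distinction property holds by Definition \ref{mxpq}; since we only consider Lorentzian metric spaces without chronological boundary (see the remark following Definition \ref{mxpq}), every point is contained in some chronological diamond; the functions $d_p$ and $d^p$ are $T_0$-continuous, as slice functions of the $T_0\times T_0$-continuous function $d$; and chronological diamonds are $T_0$-relatively compact by definition.

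Next, given the $d$-reflectivity assumption and the lower $T$-semi-continuity of $d_p$, $d^p$ for every $p\in X$, we are precisely in the setting of the second part of Proposition \ref{refl-cont}. That statement directly yields that $T$ contains $T_0$ and that the functions $d_p$, $d^p$ are in fact $T$-continuous for every $p\in X$, which is exactly the desired conclusion.

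There is no real obstacle here: the corollary is just a specialization of Proposition \ref{refl-cont} to the setting in which the ``target'' topology satisfying the hypotheses of that proposition is chosen to be the canonical topology associated with the Lorentzian metric space structure. The substantive work has already been carried out inside the proof of Proposition \ref{refl-cont}, where one builds a net in a $T_0$-relatively compact diamond, extracts a $T_0$-cluster point $\tilde q$ distinct from $q$, and uses $d$-reflectivity together with weak $d$-distinction to force $\tilde q = q$ and derive a contradiction with an assumed failure of upper $T$-semi-continuity.
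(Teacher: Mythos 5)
Your proposal is correct and is exactly the intended argument: the paper states the corollary as an immediate specialization of Proposition \ref{refl-cont}, with the canonical Lorentzian metric space topology playing the role of $T$ there and the given topology playing the role of $T'$. Your verification of the hypotheses (weak $d$-distinction, absence of chronological boundary, $T_0$-continuity of $d_p$, $d^p$, and relative compactness of diamonds) matches what the definition of a Lorentzian metric space provides.
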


The property of $d$-reflectivity implies the coincidence of the Alexandrov and Lorentzian metric space topologies.\\

\begin{corollary}
    Let $(X,d)$ be a  $d$-reflective Lorentzian metric space. Assume that $d_p, d^p$ for $p\in X$ are lower semi-continuous in the Alexandrov topology (e.g.\ $X$ is a manifold and $d$ is induced by a Lorentzian metric on $X$). Then the Alexandrov topology coincides with the initial topology of the functions $\{d_p, d^p, p\in X\}$, that is, with the Lorentzian metric space topology (which is Hausdorff).
\end{corollary}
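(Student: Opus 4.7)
The plan is to leverage the immediately preceding Corollary~\ref{refl-cont-lms} to bootstrap the assumed lower $T_A$-semi-continuity of the family $\{d_p,d^p\}$ (where $T_A$ denotes the Alexandrov topology) to full $T_A$-continuity, and then show that $T_A$ and the initial topology $T_{\textrm{in}}$ of the family $\{d_p,d^p, p\in X\}$ contain each other.

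First, I would apply Cor.~\ref{refl-cont-lms} with the topology $T$ there taken to be $T_A$: by assumption each $d_p$ and each $d^p$ is lower $T_A$-semi-continuous, so from $d$-reflectivity and the Lorentzian metric space hypothesis the corollary upgrades this to full $T_A$-continuity. By the very definition of the initial topology associated with a family of functions into $[0,+\infty]$, $T_A$-continuity of every member of $\{d_p,d^p\}$ yields the inclusion $T_{\textrm{in}} \subset T_A$.

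Next, I would verify the reverse inclusion by pure bookkeeping. The Alexandrov topology is generated as a subbasis by the sets $I^+(p)$ and $I^-(q)$, and the identities
\[
I^+(p)=d_p^{-1}((0,+\infty]),\qquad I^-(q)=(d^q)^{-1}((0,+\infty]),
\]
valid since $I=\{d>0\}$, express each subbasic set as the preimage of an open subset of $[0,+\infty]$ under a member of the family $\{d_p,d^p\}$. Hence every subbasic set of $T_A$ belongs to $T_{\textrm{in}}$, giving $T_A\subset T_{\textrm{in}}$. Combining the two inclusions yields $T_A = T_{\textrm{in}}$, and the first part of Prop.~\ref{refl-cont} identifies this common topology with the (unique) Lorentzian metric space topology. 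Hausdorffness then follows from the weak $d$-distinction built into the definition of a Lorentzian metric space: any two distinct points $p\ne q$ are separated by some $d_r$ or $d^r$, whence by disjoint preimages of disjoint intervals in $\mathbb{R}$.

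I do not anticipate a real obstacle: the bulk of the work has already been carried out in Prop.~\ref{refl-cont}, and the corollary essentially reduces to the specialization $T'=T_A$ together with the elementary observation that the Alexandrov topology is always coarser than the initial topology of $\{d_p,d^p\}$. The only point requiring a little care is to record that the hypothesis ``every point is contained in some chronological diamond'' (part of the convention on Lorentzian metric spaces without chronological boundary adopted here) is what makes the Alexandrov subbasis a genuine subbasis for a topology on all of $X$, so that the comparison $T_{\textrm{in}} \leftrightarrow T_A$ is meaningful at every point.
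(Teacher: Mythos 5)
Your proposal is correct and follows essentially the same route as the paper: apply Corollary \ref{refl-cont-lms} with $T$ the Alexandrov topology to upgrade lower semi-continuity to continuity, deduce that the Alexandrov topology contains the initial topology, and note the trivial reverse inclusion via $I^\pm(p)$ being preimages of $(0,+\infty]$. The extra remarks on Hausdorffness and on identifying the common topology with the Lorentzian metric space topology are consistent with what the paper takes as already established.
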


\begin{proof}
    By Corollary \ref{refl-cont-lms}, $d_p$, $d^p$ are continuous in the Alexandrov topology for every $p\in X$, thus the Alexandrov topology is finer that the initial topology of the functions $\{d_p, d^p, p\in X\}$. It is clear that the initial topology of the functions $\{d_p, d^p, p\in X\}$ is finer than the Alexandrov topology.
\end{proof}

\begin{theorem} \label{tnbod}
    Let $(X,d)$ be a Lorentzian metric space and let $T$ be a topology for $X$ such that  $p\in \overline{I^{\pm}(p)}$ for every $p\in X$, and  such that the function $d$ is lower $T$-semi-continuous. Then the following statements are equivalent:
    \begin{itemize}
        \item[(i)] $d$ is $T\times T$-continuous;
        \item[(ii)] The functions $d_p$ and $d^p$ are $T$-continuous for every $p\in X$;
        \item[(iii)] $T$ contains the Lorentzian metric space topology of $X$;
        \item[(iv)] $(X,d)$ is $d$-reflective;
    \end{itemize}
    in which case strong reflectivity holds.
\end{theorem}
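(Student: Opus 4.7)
The plan is to establish the chain (i)$\Leftrightarrow$(ii)$\Leftrightarrow$(iii) together with (ii)$\Leftrightarrow$(iv), and to extract the final ``strong reflectivity'' clause as a byproduct once (ii) is in hand. The argument will be essentially a synthesis of the results already developed in this subsection, so most of the work consists of checking that the hypotheses of the cited statements are met by our standing assumptions.

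The equivalence (i) $\Leftrightarrow$ (ii) will be an immediate appeal to Theorem \ref{upcoce}: both of its hypotheses, namely $p \in \overline{I^\pm(p)}$ and lower $T$-semi-continuity of $d$, are among our standing assumptions. For (ii) $\Leftrightarrow$ (iii), we shall use the fact, established already for Lorentzian metric spaces, that the Lorentzian metric space topology $T_L$ coincides with the initial topology generated by the family $\{d_p, d^p : p \in X\}$ (this is the first assertion of Proposition \ref{refl-cont} applied to $T_L$ itself, whose hypotheses are built into the definition of a Lorentzian metric space). Hence $T$ makes all these functions continuous if and only if $T \supseteq T_L$.

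For (ii) $\Rightarrow$ (iv): $T$-continuity of each $d_p$ and $d^p$ gives in particular upper $T$-semi-continuity, so Proposition \ref{upcont-refl} yields both strong past and strong future reflectivity, whence in particular $d$-reflectivity. For (iv) $\Rightarrow$ (ii): the lower $T$-semi-continuity of $d$ passes to $d_p$ and $d^p$ via the $T$-to-$T \times T$ continuous inclusions $q \mapsto (p,q)$ and $q \mapsto (q,p)$ (lower semi-continuity is preserved under pullback by continuous maps), so Corollary \ref{refl-cont-lms} applies and promotes lower $T$-semi-continuity of $d_p, d^p$ to $T$-continuity. The final clause of the theorem is then for free: once (ii) is in force, Proposition \ref{upcont-refl} directly delivers strong past and strong future reflectivity.

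The main obstacle is not a technical difficulty but rather the bookkeeping of hypotheses across the four prior results being cited. The subtlest step is $(iv) \Rightarrow (ii)$, where one must transfer lower $T$-semi-continuity of the two-variable function $d$ to the one-variable sections $d_p, d^p$ before invoking Corollary \ref{refl-cont-lms}; this transfer uses only the trivial observation that the coordinate inclusions $X \hookrightarrow X \times X$ are always $T$-to-$T \times T$ continuous, so no additional assumption on $T$ is required beyond what is already posited.
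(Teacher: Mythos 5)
Your proposal is correct and follows essentially the same route as the paper: (i)$\Leftrightarrow$(ii) via Theorem \ref{upcoce}, (ii)$\Leftrightarrow$(iii) via the identification of the Lorentzian metric space topology with the initial topology of $\{d_p,d^p\}$, (ii)$\Rightarrow$(iv) and the final strong-reflectivity clause via Proposition \ref{upcont-refl}, and (iv)$\Rightarrow$(ii) via Proposition \ref{refl-cont} (your use of Corollary \ref{refl-cont-lms} is just that proposition specialized, and your explicit remark on transferring lower semi-continuity of $d$ to its sections is a correct, slightly more careful rendering of what the paper leaves implicit).
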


\begin{proof}
    The first two statements are equivalent  by Theorem \ref{upcoce}. The second is the same as the third one because the topology of the  Lorentzian metric space $X$ is the initial topology of the functions $d_p$, $d^p$. The second item implies the last one by Proposition \ref{upcont-refl}, and the converse implication is due to Proposition \ref{refl-cont}. The last statement follows from Prop.\ \ref{upcont-refl}.
\end{proof}

\begin{theorem}
In a Lorentzian length space $(X,d)$ function $d$ is lower semi-continuous in the Alexandrov topology. Moreover, the conditions
\begin{itemize}
\item[(a)] $p\in \overline{I^{\pm}(p)}$ for every $p\in X$ (closure is in the Lorentzian metric space topology),
\item[(b)] The chronological diamonds provide a basis for the Lorentzian metric space topology,
\end{itemize}
are equivalent. They imply strong reflectivity (equiv.\ ($d$-)reflectivity) which implies the equivalence of the Lorentzian metric space topology with the Alexandrov topology.
\end{theorem}


\begin{proof}
Let $p\ll q$ otherwise lower semi-continuity of $d$ at $(p,q)$ is clear. Let $0<m<d(p,q)$ and let $\sigma:[0,1]\to X$ be a maximal isocausal curve connecting $p$ to $q$. By the continuity of $d_p\circ \sigma$ and $d^q\circ \sigma$ we can find $p',q'\in \sigma$, such that $d_p(p')=(d(p,q)-m)/2=d^q(q')$, so that $d(p',q')=m$. Then $(p,q)\in I^{-}(p')\times I^+(q')$ which is open in the Alexandrov topology and for every $(p'',q'')\in   I^{-}(p')\times I^+(q')$, we have by the reverse triangle inequality $d(p'',q'')\ge d(p',q')=m$.

(a) $\Rightarrow$ (b). This follows from  \cite[Thm.\ 3.9]{minguzzi24b}.

(b) $\Rightarrow$ (a).  Let us prove $p\in \overline{I^{+}(p)}$, the other inclusion being analogous.
Let $O\ni p$ be a Lorentzian metric space open set.  By the assumption there are $w,z\in X$,  such that $p\in I(w,z)\subset O$.  Let   $\sigma$ be the  maximal isocausal curve connecting $p$ to $z$, then we can find $z'$ on it such that $d(p,z')=d(z',z)=d(p,z)/2>0$. Thus $z'\in I(w,z)\subset O$ and  $z'\in I^+(p)$ which proves that every open set $O$ of $p$ intersects $I^+(p)$.

Strong reflectivity follows from Prop.\ \ref{upcont-refl} with $T$ the Lorentzian metric space topology. As mentioned previously, due to Prop.\ \ref{ckqp} strong reflectivity is equivalent to ($d$-)reflectivity for Lorentzian length spaces.

Finally, by Cor.\ \ref{refl-cont-lms} $d$-reflectivity implies the equivalence of Lorentzian metric space  and Alexandrov topologies.
\end{proof}

%

\section{Conclusions}

The problem of defining a natural topology for spacetime and the challenge of establishing an abstract notion of a {\em Lorentzian metric space} are interconnected. This is because the latter must also be equipped with a topology derived solely from the two-point function $d$. In a smooth setting, this topology could, in principle, differ from that of the manifold. If the two were to coincide, it would be a significant result, as it would provide a way to define the ``manifold topology" from an abstract framework, without relying on the structure of a manifold. This is precisely what we have achieved in this work.

We have identified certain properties, specifically (i)-(iii) in Theorem \ref{cakop}, which are expressed exclusively in terms of the function \( d \). These properties ensure that the manifold topology (in the context of a Lorentzian manifold) aligns with the initial topology induced by the functions $d_p, d^p \in M$. We might thus refer to the canonical topology of the Lorentzian metric space as the ``manifold topology'' even if the structure $(M,d)$ is not that of a manifold.

 This outcome validates the approach to Lorentzian metric spaces based solely on the function \( d \), as introduced in \cite{minguzzi22,minguzzi24b}. The only caveat is that the notion of {\em weak $d$-distinction} in \cite{minguzzi22,minguzzi24b} must be replaced here with the slightly stronger condition of {\em future or past $d$-distinction}. We hope that future work will improve this result by demonstrating complete equivalence. Alternatively, the theory presented in \cite{minguzzi22,minguzzi24b} could be reformulated by strengthening the conditions for a Lorentzian metric space to match the conditions (i)-(iii) established in this work. A further possibility is that of adding a condition of $d$-reflectivity to the definition.

We achieved this result through a detailed study of the continuity properties of the Lorentzian distance, uncovering findings that appear to be novel even within the smooth framework. Along the way, we found it necessary to introduce and explore new properties, such as {\em $d$-distinction} and {\em $d$-reflectivity}, which, as we demonstrated, coincide with traditional causality concepts in a smooth setting.

We believe this work strengthens the case for a ``metric" type theory of spacetime based solely on the function $d$. Such a theory is not only feasible but may also prove useful in the development of a quantum spacetime theory, particularly in the pursuit of unifying gravity with the other fundamental forces.

\bmhead{Acknowledgements}
Useful conversations with Stefan Suhr are acknowledged. This study was funded by the European Union - NextGenerationEU, in the framework of the PRIN Project (title) {\em Contemporary perspectives on geometry and gravity} (code 2022JJ8KER - CUP B53D23009340006). The views and opinions expressed in this article are solely those of the authors and do not necessarily reflect those of the European Union, nor can the European Union be held responsible for them.\\



\noindent {\bf Confict of interest.} All authors declare that they have no conflict of interest.\\

\noindent
{\bf Data availability.} This manuscript has no associated data.


\end{document}